\documentclass[11pt]{article}

\usepackage[T1]{fontenc}
\usepackage[utf8]{inputenc}
\usepackage{lmodern}
\usepackage{microtype}
\usepackage{amsmath,amssymb,amsthm,mathtools,mathrsfs}
\usepackage{bm}
\usepackage{enumitem}
\usepackage{xcolor}
\usepackage[top=1in,bottom=1in,left=1in,right=1in]{geometry}
\usepackage[colorlinks=true,linkcolor=blue!55!black,citecolor=blue!55!black,urlcolor=blue!55!black]{hyperref}

\def\E{\mathbb{E}}

\def\P{\mathbb{P}}
\def\Q{\mathbb{Q}}
\def\R{\mathbb{R}}

\def\Ac{{\mathcal A}}

\def\Dc{{\mathcal D}}

\def\Fc{{\mathcal F}}

\def\Mc{{\mathcal M}}
\def\Nc{{\mathcal N}}
\def\Oc{{\mathcal O}}

\def\Uc{{\mathcal U}}

\def\Xc{{\mathcal X}}

\def\id{\mathbf{1}}

\def\hOm{\widehat{\Omega}^{\,N}}
\def\hX{\widehat{\Xc}^{\,N}}
\newcommand{\dd}{\mathrm{d}}
\newcommand{\Leb}{\mathcal L}
\newcommand{\HH}{\mathcal H}

\newcommand{\dist}{\operatorname{dist}}

\newcommand{\Ent}{\operatorname{Ent}}
\newcommand{\tr}{\operatorname{tr}}
\newcommand{\argmin}{\operatorname*{argmin}}

\newcommand{\restr}{\mathbin{\vrule height 1.4ex depth -0.3ex width 0.08ex\vrule height 0.08ex depth -0.02ex width 0.7ex}}
\newcommand{\one}{\mathbf 1}
\newcommand{\norm}[1]{\left\lVert #1\right\rVert}

\newcommand{\Wtwo}{W_2}
\newcommand{\Kant}{\mathscr K}
\newcommand{\Coll}{\mathscr C}

\newcommand{\Record}{\widehat X}
\newcommand{\pmin}{p_{\min}}
\newcommand{\pmax}{p_{\max}}

\newtheorem{theorem}{Theorem}[section]
\newtheorem{lemma}[theorem]{Lemma}

\newtheorem{proposition}[theorem]{Proposition}

\theoremstyle{definition}
\newtheorem{assumption}{Assumption}

\newtheorem{algorithm}[theorem]{Algorithm}
\newtheorem{example}[theorem]{Example}
\theoremstyle{remark}
\newtheorem{remark}[theorem]{Remark}

\numberwithin{equation}{section}

\title{Semi-discrete quadratic Wasserstein energy and state-dependent Langevin exploration}

\author{%
  Ran Gu\thanks{NITFID, School of Statistics and Data Science,
  Nankai University. \href{mailto:rgu@nankai.edu.cn}{rgu@nankai.edu.cn}}
  \and
  Gaoyue Guo\thanks{MICS, CentraleSup\'elec, Universit\'e Paris-Saclay.
  \href{mailto:gaoyue.guo@centralesupelec.fr}{gaoyue.guo@centralesupelec.fr}. This work was supported by the grant ANR-25-CE40-0714 (MATH-SPA). }
  \and
  Kelvin Shuangjian Zhang\thanks{School of Mathematical Sciences,
  Fudan University. \href{mailto:ksjzhang@fudan.edu.cn}{ksjzhang@fudan.edu.cn}}
}
\date{}

\hypersetup{
  pdftitle={Semi-discrete quadratic Wasserstein energy and state-dependent Langevin exploration},
  pdfauthor={Ran Gu, Gaoyue Guo, Kelvin Shuangjian Zhang},
  pdfkeywords={semi-discrete optimal transport, quadratic Wasserstein energy, Laguerre cells, Lloyd algorithm, exploratory HJB equation, Langevin diffusion, geometric ergodicity, global optimization}
}

\begin{document}
\maketitle

\begin{abstract}
Let $\Omega\subset\mathbb R^d$ be compact and convex with nonempty interior, and let $\nu$ be a
probability measure on $\Omega$ with density
\[
 \rho\in C(\Omega)\cap W^{1,1}(\Omega^\circ),
 \qquad
 0<\underline\rho\le\rho\le\overline\rho<\infty.
\]
For prescribed masses $p_1,\ldots,p_N>0$, we study the semi-discrete quadratic Wasserstein energy
\[
 E(X)=\frac12W_2^2\!\left(\nu,\sum_{i=1}^Np_i\delta_{x_i}\right),
 \qquad X=(x_1,\ldots,x_N).
\]
The energy is nonsmooth at collisions of sites.  We prove local Lipschitz continuity on the full
configuration space, together with global semiconcavity, coercivity, and dissipativity; show that every
global minimizer is interior and collision free; and establish $C^2$ regularity on the collision-free
configuration space.  The gradient is expressed through the barycenters of the balanced Laguerre
cells, while the Hessian is given by an explicit facet formula and satisfies a global one-sided bound.
We also solve the one-dimensional problem explicitly in each ordering chamber and give a two-site
example on the unit square with non-minimizing Lloyd fixed points.

For $d\ge2$, we then formulate an entropy-regularized relaxed control of the Langevin temperature.
The controlled dynamics is strongly well posed, nonexplosive, and collision free.  Its value function
is a classical interior solution of the exploratory Hamilton--Jacobi--Bellman equation; the Laplacian
of the value function is locally $C^1$, which yields a locally Lipschitz optimal temperature feedback.
Independently of this optimal-control result, for every $d\ge1$, every fixed Borel temperature rule
bounded away from zero, and every sufficiently small step size, the associated Gaussian Euler chain
is geometrically ergodic with a full-support invariant law.  The raw iterates do not converge, whereas
the best-so-far energy converges almost surely to the global minimum and the running record approaches
the set of global minimizers.

\medskip
\noindent\textbf{Keywords.}
Semi-discrete Wasserstein distance; Laguerre cells; Lloyd algorithm; exploratory HJB  equation; Langevin diffusion;
geometric ergodicity. 
\end{abstract}

\section{Introduction}\label{sec:intro}

Approximating a continuous probability measure by a finitely supported measure with prescribed
atomic masses is a basic problem in semi-discrete optimal transport.  In capacitated location and
urban-planning models, the target measure describes the spatial distribution of demand, the sites
represent facilities, and the prescribed masses represent their capacities.  Closely related
formulations arise in vector quantization, centroidal Voronoi and Laguerre tessellations, point-cloud
approximation, and numerical integration; see
\cite{ButtazzoSantambrogio2009,DuFaberGunzburger1999,GrafLuschgy2000,
BourneRoper2015,MerigotSantambrogioSarrazin2021}.  A natural deterministic method in the present
fixed-mass setting is the weighted Lloyd iteration: one first constructs the Laguerre cells having the
prescribed masses and then moves each site to the barycenter of its cell.  Convergence properties of
Lloyd-type iterations have been studied in
\cite{DuEmelianenkoJu2006,EmelianenkoJuRand2008,BourneRoper2015,
PortalesCazellesPauwels2025}.  These results concern convergence toward fixed or critical
configurations under their respective assumptions; they do not by themselves provide a
global-optimization guarantee, because non-minimizing critical points can occur.  An explicit example
is given in Proposition~\ref{prop:square-critical-points}.

This paper develops a global and local regularity theory for the corresponding semi-discrete
Wasserstein energy and then uses it as the potential of a state-dependent Langevin exploration.  The
main analytical obstruction is the collision set, where two or more labeled sites coincide and the
energy is generally not differentiable.  We therefore use two complementary descriptions.  A
splitting formulation remains valid on the whole configuration space and yields collision-valid
semiconcavity, growth, and dissipativity estimates.  On the open collision-free set, the optimal
splitting is induced by balanced Laguerre cells and can be differentiated classically.  In dimensions
$d\ge2$, the collision diagonals are polar for the uniformly nondegenerate diffusions considered
below.  This permits a stochastic-control analysis on the punctured state space without imposing
artificial boundary data on the collision set.

The continuous-time and discrete-time parts answer two distinct questions.  The first is an optimal
control problem: among randomized temperatures, which state-dependent law minimizes a specified
discounted entropy-regularized criterion?  The second is an asymptotic exploration problem: for
which discrete temperature rules does the best energy observed up to time $n$ converge to the
global minimum?

\subsection{Problem formulation}\label{subsec:problem-formulation}

Write $\Xc:=\mathbb R^d$ and fix an integer $N\ge2$.  We work under the following standing assumption.

\begin{assumption}[Target measure and prescribed masses]\label{ass:geometry}
{\rm (i)} The set $\Omega\subset\R^d$ is compact and convex with nonempty interior $\Omega^\circ$.

\medskip

\noindent {\rm (ii)} The probability measure $\nu$ has the form
\begin{align}
 \nu(\dd y)&=\rho(y)\id_{\Omega}(y)\dd y,
 \label{eq:density-assumption-main}\\
 \rho&\in C(\Omega)\cap W^{1,1}(\Omega^\circ),
 \qquad
 0<\underline\rho:=\min_{y\in\Omega}\rho(y)
 \le \max_{y\in\Omega}\rho(y)=:\overline\rho<\infty,
 \notag\\
 \int_\Omega\rho(y)\dd y&=1.
 \notag
\end{align}

\noindent {\rm (iii)} The prescribed masses $p_1,\ldots,p_N$ satisfy
\begin{equation}\label{eq:weights-main}
 \sum_{i=1}^Np_i=1,
 \qquad
 0<\pmin:=\min_{1\le i\le N}p_i,
 \qquad
 \pmax:=\max_{1\le i\le N}p_i.
\end{equation}
\end{assumption}

For $X=(x_1,\ldots,x_N)\in\Xc^N$, define
\begin{equation}\label{eq:muX-intro}
 \mu_X:=\sum_{i=1}^Np_i\delta_{x_i},
\end{equation}
and the semi-discrete quadratic Wasserstein energy
\begin{equation}\label{eq:E-intro}
 E(X):=\frac12W_2^2(\nu,\mu_X).
\end{equation}
Our optimization problem is
\begin{equation}\label{pb:min}
 E_*:=\inf_{X\in\Xc^N}E(X),
\end{equation}
and the set of global minimizers is
\begin{equation}\label{eq:minimizer-set}
 \Mc_*:=\argmin_{X\in\Xc^N}E(X).
\end{equation}
The collision set and the collision-free configuration spaces are
\begin{equation}\label{eq:collision-spaces}
 \Coll:=\bigcup_{1\le i<j\le N}\{X\in\Xc^N:x_i=x_j\},
 \qquad
 \hX:=\Xc^N\setminus\Coll,
 \qquad
 \hOm:=\hX\cap\Omega^N.
\end{equation}
All derivatives below are ordinary Euclidean derivatives on the open set $\hX\subset\Xc^N$.

Let $\mathcal M_+(\Omega)$ denote the set of finite nonnegative Borel measures on $\Omega$, and
define the set of labeled splittings of $\nu$ with the prescribed masses by
\begin{equation}\label{eq:splittings}
 \Ac_p(\nu):=
 \left\{
 \boldsymbol\nu=(\nu_1,\ldots,\nu_N):
 \nu_i\in\mathcal M_+(\Omega),\quad
 \sum_{i=1}^N\nu_i=\nu,\quad
 \nu_i(\Omega)=p_i
 \right\}.
\end{equation}
Every $\boldsymbol\nu\in\Ac_p(\nu)$ induces a transport plan
\[
 \gamma_{\boldsymbol\nu}(\dd y,\dd x)
 :=\sum_{i=1}^N\nu_i(\dd y)\delta_{x_i}(\dd x)
 \in\Pi(\nu,\mu_X).
\]
Conversely, every plan in $\Pi(\nu,\mu_X)$ admits at least one such labeled decomposition; when
sites collide, the decomposition need not be unique.  Consequently,
\begin{equation}\label{eq:E-splitting}
 E(X)=\inf_{\boldsymbol\nu\in\Ac_p(\nu)}
 \frac12\sum_{i=1}^N\int_\Omega|x_i-y|^2\nu_i(\dd y).
\end{equation}
The infimum is attained because $\Ac_p(\nu)$ is narrowly compact.  We write
\begin{equation}\label{eq:first-moments-splitting}
 m_i(\boldsymbol\nu):=\int_\Omega y\,\nu_i(\dd y),
 \qquad
 m_2(\nu):=\int_\Omega|y|^2\nu(\dd y).
\end{equation}
Representation~\eqref{eq:E-splitting} is valid at collisions and is the source of the global
estimates in Section~\ref{sec:objective}.  On $\hX$, the optimal splitting is induced by a balanced
Laguerre partition and admits a differential analysis.

\subsection{Relation to previous work and contributions}
\label{subsec:literature-contributions}

The paper combines semi-discrete optimal transport, entropy-regularized
exploratory control, and Harris-chain analysis of Langevin discretizations.  We state explicitly
which ingredients are imported and which conclusions are specific to the present problem.

\begin{enumerate}[label=\textup{(\roman*)},leftmargin=2.8em]

\item \emph{Semi-discrete energy.}
The semi-dual formulation and balanced Laguerre representation are standard in semi-discrete
optimal transport; see, for example, \cite{Villani2009,KitagawaMerigotThibert2019}.  The strict concavity and weighted graph-Laplacian structure used to balance the cells are
closely related to the analysis in that reference.  De~Gournay, Kahn, and Lebrat
\cite{deGournayKahnLebrat2019} establish joint second-order differentiation of semi-discrete
transport functionals with respect to the parameters of the discrete measure and provide the
moving-cell formulas used in Appendix~\ref{app:parametric}.

The contributions specific to the present energy are the collision-valid splitting analysis on all of
$\Xc^N$, including global semiconcavity, explicit supergradients, coercivity, and one-sided
dissipativity; the proof that every global minimizer is interior and collision free; and the
organization of the second variation into the symmetric facet form
\[
 D^2E=P-\mathcal Q_X,
 \qquad \mathcal Q_X\ge0,
\]
which yields the global one-sided Hessian bound needed in the stochastic analysis.  We also give an
explicit prescribed-mass solution in every one-dimensional ordering chamber and a complete
classification of the two-site critical configurations for the uniform measure on the unit square.
The one-dimensional result should be contrasted with the classical quantization literature, where
the atom masses are generally induced by the Voronoi cells rather than fixed in advance; see
\cite{GrafLuschgy2000}.

\item \emph{Exploratory temperature control.}
The continuous-time exploratory-control formulation originates in the entropy-regularized relaxed
control framework of Wang, Zariphopoulou, and Zhou
\cite{WangZariphopoulouZhou2020}.  Tang, Zhang, and Zhou
\cite{TangZhangZhou2022} establish well-posedness and regularity for the viscosity solution of the
associated exploratory HJB equation, together with a vanishing-exploration limit.  Gao, Xu, and Zhou
\cite{GaoXuZhou2022} apply this framework to state-dependent temperature control for Langevin
diffusions and derive the truncated exponential, or Gibbs, temperature law.  Accordingly, the
entropy relaxation, the scalar log-partition Hamiltonian, and the pointwise Gibbs minimizer appearing
below are specializations of that framework rather than new constructions.

What is specific here is the singular semi-discrete potential.  The state space is the punctured open
set $\hX$, the drift $-\nabla E$ is only locally Lipschitz there and has linear rather than bounded
growth, and no boundary value is imposed on the collision diagonals.  The global dissipativity from
the first part replaces a global bounded-gradient hypothesis, while polarity in dimension $d\ge2$
replaces boundary conditions on the internal singular set.  We prove global strong well-posedness and
collision avoidance for every relaxed control, establish the dynamic programming principle on the
raw canonical space, identify the value function as an interior viscosity solution, obtain its local
$C^{2,\beta}$ regularity, and then use the HJB identity itself to bootstrap
$\Delta v_\lambda$ to $C^1_{\mathrm{loc}}$.  This yields a locally Lipschitz optimal feedback and a
strong verification theorem.  Recent numerical work on exploratory HJB temperature control is
discussed in \cite{WangLiWangZhang2026}; the present paper does not analyze a particular HJB
solver.

\item \emph{Euler exploration and the running record.}
Classical global-optimization results for Langevin dynamics typically use a temperature that decreases
to zero; see, among many references,
\cite{ChiangHwangSheu1987,HolleyKusuokaStroock1989,GelfandMitter1991}.  Fixed-temperature Langevin and stochastic-gradient
Langevin methods instead lead naturally to recurrent or stationary sampling behavior; for a
nonasymptotic optimization perspective, see \cite{RaginskyRakhlinTelgarsky2017}.  Our Euler scheme
keeps a strictly positive temperature floor, so the raw iterates are not expected to converge to a
minimizer.  The relevant object is the running record.

The geometric-ergodicity proof uses standard Foster--Lyapunov, minorization, and Harris-recurrence
theory from \cite{MeynTweedie2009}.  The contribution is to verify these hypotheses for the exact
semi-discrete drift, despite its collision singularities, and for an arbitrary fixed Borel
state-dependent temperature rule with values in $[a,1]$.  Once full support and positive Harris
recurrence are established, record convergence follows from infinitely many visits to every open
sublevel set above $E_*$.  Thus the long-time record theorem is deliberately temperature-robust: it
covers the exact HJB feedback, any fixed clipped numerical approximation of it, and constant positive
temperatures.  It does not compare their finite-time performance.

\end{enumerate}

\subsection{Main results}\label{subsec:main-results}

The main results follow the three parts just described.

\medskip

\noindent\textbf{Regularity and geometry of the semi-discrete energy.}
For $X\in\hX$ and $\Phi=(\Phi_1,\ldots,\Phi_N)\in\mathbb R^N$, define the Laguerre cells
\[
 V_i(X,\Phi)
 :=
 \left\{
 y\in\Omega:
 \frac12|y-x_i|^2-\Phi_i
 \le
 \frac12|y-x_j|^2-\Phi_j
 \ \text{for every }j=1,\ldots,N
 \right\}.
\]
There is a unique normalized vector
\[
 \Phi^*(X)\in
 U:=\left\{\Phi\in\mathbb R^N:\sum_{i=1}^N\Phi_i=0\right\}
\]
such that
\[
 \nu\bigl(V_i(X,\Phi^*(X))\bigr)=p_i,
 \qquad i=1,\ldots,N.
\]
We write
\[
 V_i(X):=V_i(X,\Phi^*(X)),
 \qquad
 c_i(X):=\frac1{p_i}\int_{V_i(X)}y\rho(y)\dd y,
\]
and set
\[
 P:=\operatorname{diag}(p_1I_d,\ldots,p_NI_d).
\]
Thus, for $\Xi=(\xi_1,\ldots,\xi_N)$ and
$\Theta=(\theta_1,\ldots,\theta_N)$ in $\Xc^N$,
\[
 \Xi\cdot P\Theta=\sum_{i=1}^Np_i\,\xi_i\cdot\theta_i.
\]

\begin{theorem}\label{thm:C2}
Under Assumption~\ref{ass:geometry},
\begin{equation}\label{eq:E-C2}
 E\in C^2(\hX),
 \qquad
 \Phi^*\in C^1(\hX;U).
\end{equation}
For every $X\in\hX$,
\begin{equation}\label{eq:DE-C2-theorem-main}
 \nabla_{x_i}E(X)=p_i\bigl(x_i-c_i(X)\bigr),
 \qquad i=1,\ldots,N.
\end{equation}
Equivalently, for every $\Xi=(\xi_1,\ldots,\xi_N)\in\Xc^N$,
\[
 DE(X)[\Xi]
 =\sum_{i=1}^Np_i\bigl(x_i-c_i(X)\bigr)\cdot\xi_i.
\]
Moreover, there exists a continuous family
$(\mathcal Q_X)_{X\in\hX}$ of symmetric nonnegative bilinear forms on $\Xc^N$ such that
\begin{equation}\label{eq:hessian-structural-main}
 D^2E(X)[\Xi,\Theta]
 =\Xi\cdot P\Theta-\mathcal Q_X[\Xi,\Theta],
 \qquad \Xi,\Theta\in\Xc^N.
\end{equation}
The bilinear form $\mathcal Q_X$ is given by the facet representation
\eqref{eq:hessian-facet-main}.  In particular,
\begin{equation}\label{eq:hessian-upper-P}
 D^2E(X)[\Xi,\Xi]
 \le\sum_{i=1}^Np_i|\xi_i|^2,
 \qquad \Xi=(\xi_1,\ldots,\xi_N)\in\Xc^N.
\end{equation}
On every compact set $K\Subset\hX$, the maps $D^2E$ and $D\Phi^*$ are bounded and uniformly
continuous.
\end{theorem}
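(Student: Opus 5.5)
The plan is to go through the semi-dual (Kantorovich) formulation. For $X\in\hX$ and $\Phi\in\R^N$ set
\[
 G(X,\Phi):=\sum_{i=1}^N\int_{V_i(X,\Phi)}\Bigl(\tfrac12|y-x_i|^2-\Phi_i\Bigr)\rho(y)\,\dd y+\sum_{i=1}^Np_i\Phi_i ,
\]
so that $E(X)=\max_{\Phi}G(X,\Phi)$ by Kantorovich duality.

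\textbf{Step 1: regularity of $G$.} Since the sites are distinct, the Laguerre cells are convex polytopes intersected with $\Omega$, and their facets lie on the hyperplanes
\[
 \{\,y\cdot(x_j-x_i)=\tfrac12(|x_j|^2-|x_i|^2)-(\Phi_j-\Phi_i)\,\}.
\]
By the moving-cell formulas of \cite{deGournayKahnLebrat2019} (Appendix~\ref{app:parametric}), $G$ is $C^1$ in $(X,\Phi)$ with
\[
 \partial_{\Phi_i}G=p_i-\nu(V_i), \qquad \partial_{x_i}G=\int_{V_i}(x_i-y)\rho\,\dd y .
\]
The cell masses $\nu(V_i(X,\Phi))$ and first moments are $C^1$ jointly in $(X,\Phi)$. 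Their derivatives are facet integrals $\int_{V_i\cap V_j}(\cdot)\,\rho\,\dd\HH^{d-1}/|x_i-x_j|$. These are continuous thanks to $\rho\in C(\Omega)$ and to the facets varying continuously as long as $|x_i-x_j|$ stays bounded below. This makes $G\in C^2$ locally on $\hX\times\R^N$. The facet derivatives are exactly where the hypothesis $W^{1,1}$ together with continuity of $\rho$ enters, through a trace/divergence argument that differentiates the boundary integrals.

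\textbf{Step 2: the balancing map $\Phi^*$.} The Hessian $D^2_\Phi G=-D_\Phi(\nu(V_i))$ is minus a weighted graph Laplacian, with edge weights
\[
 w_{ij}=\int_{V_i\cap V_j}\rho\,\dd\HH^{d-1}/|x_i-x_j|\ge 0 .
\]
At $\Phi^*(X)$ every cell has mass $p_i>0$. Because $\Omega$ is convex and connected and $\rho\ge\underline\rho>0$, the adjacency graph restricted to cells meeting $\Omega$ in positive measure is connected. Hence the Laplacian has kernel exactly $\mathrm{span}(\one)$ and is invertible on $U$, which is the strict concavity argument of \cite{KitagawaMerigotThibert2019}. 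Existence and uniqueness of $\Phi^*(X)$ in $U$ follow from concavity and coercivity of $G(X,\cdot)$ on $U$. The implicit function theorem applied to $\partial_\Phi G=0$ then gives $\Phi^*\in C^1(\hX;U)$ with
\[
 D\Phi^*=-(D^2_{\Phi\Phi}G)^{-1}D^2_{\Phi X}G .
\]

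\textbf{Step 3: derivatives of $E$.} The envelope theorem gives
\[
 \nabla_{x_i}E=\partial_{x_i}G(X,\Phi^*(X))=p_ix_i-\int_{V_i}y\rho=p_i(x_i-c_i),
\]
which is \eqref{eq:DE-C2-theorem-main}. Since $\nabla E$ is then a composition of $C^1$ maps, $E\in C^2(\hX)$. For the structure \eqref{eq:hessian-structural-main}, differentiate $p_i(x_i-c_i(X))$. The term $p_i\xi_i$ produces $\Xi\cdot P\Theta$. The variation of $-p_ic_i=-\int_{V_i}y\rho$ is a sum of facet integrals of $y\,\cdot$ (normal velocity of the facet). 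The normal velocity of facet $V_i\cap V_j$ is an affine function of $y$ involving $\xi_i,\xi_j$ and $\dot\Phi_i-\dot\Phi_j$, so each facet contributes a product of affine expressions. Symmetrizing over each facet pair $(i,j)$ and using the balance equation $\sum_j w_{ij}(\dot\Phi_i-\dot\Phi_j+\dots)=0$ to absorb the $\dot\Phi$ terms, I expect to reach
\[
 \mathcal Q_X[\Xi,\Xi]=\sum_{i<j}\int_{V_i\cap V_j}\frac{\bigl((\xi_i-\xi_j)\cdot(y-\cdot)+\dot\Phi_{ij}\bigr)^2}{|x_i-x_j|}\rho\,\dd\HH^{d-1}\ge0 .
\]
Equivalently, $\mathcal Q_X$ is the sum of a facet quadratic form and a Schur-complement term, and the Schur complement is nonnegative because the $\Phi$-block is negative definite on $U$. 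This immediately gives \eqref{eq:hessian-upper-P}.

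\textbf{Step 4: uniform bounds and the main obstacle.} On a compact $K\Subset\hX$, $\min_{i\ne j}|x_i-x_j|$ is bounded below and $\Phi^*$ ranges in a compact set. The smallest nonzero eigenvalue of the Laplacian is bounded below on $K$ by continuity and compactness. So $D^2E$ and $D\Phi^*$ are continuous on $K$, hence bounded and uniformly continuous there.

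I expect the main obstacle to be Step 1: establishing joint $C^1$ regularity of the facet integrals in $(X,\Phi)$. The difficulty comes from degenerate configurations, where facets meet $\partial\Omega$ tangentially or lower-dimensional faces appear. I would handle this by citing the parametric results of Appendix~\ref{app:parametric}, checking that their genericity hypotheses hold when the sites are distinct and $\rho$ is continuous and bounded below. I would also verify the connectedness of the adjacency graph using the convexity of $\Omega$.
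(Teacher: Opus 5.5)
Your architecture matches the paper's: joint $C^2$ regularity of the semi-dual from \cite{deGournayKahnLebrat2019}, invertibility of the facet Laplacian $L_X$ on $U$, the implicit function theorem for $\Phi^*$, the envelope formula, and symmetrization of the facet terms through the linearized balance equation $L_X\dot\Phi_X[\Xi]=-b_X[\Xi]$. In the paper's Step~4, the linearized mass identity is multiplied by $x_i\cdot\theta_i-\dot\Phi_{X,i}[\Theta]$ and subtracted. This produces the integrand $Q^X_{ij}$ of \eqref{eq:Q-main}, which is what your placeholder $(\xi_i-\xi_j)\cdot(y-\cdot)+\dot\Phi_{ij}$ should become.

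The genuinely different ingredient is how you obtain $\ker L_X=\R\one$ and uniqueness of $\Phi^*$. The paper imports both from \cite[Theorems~1.4 and~5.1]{KitagawaMerigotThibert2019}, after checking a weighted Poincar\'e--Wirtinger inequality. You instead use connectivity of the facet-adjacency graph. Uniqueness then follows because the maximizers of the concave map $G(X,\cdot)$ form a convex set of balanced vectors, at which $D^2_{\Phi\Phi}G$ is negative definite on $U$. This route is valid and more self-contained, and you also get continuity of $\Phi^*$ for free from the implicit function theorem plus uniqueness; the paper proves it separately in Lemma~\ref{lem:weight-continuity}.

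The connectivity claim does need a proof, and it is short. Suppose a proper nonempty $S$ had $\HH^{d-1}(F_{ij})=0$ for all $i\in S$, $j\notin S$. Then $A=\bigcup_{i\in S}V_i$ and $B=\bigcup_{j\notin S}V_j$ are closed and cover $\Omega$, and $\Omega\setminus B$, $\Omega\setminus A$ contain balls $B_r(a),B_r(b)\subset\Omega^\circ$. By convexity, every segment parallel to $b-a$ joining these balls lies in $\Omega$ and must meet $A\cap B$. Hence the $1$-Lipschitz orthogonal projection of the $\HH^{d-1}$-null set $A\cap B$ onto $(b-a)^\perp$ would contain a $(d-1)$-disc, which is impossible.

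Two statements need correcting.

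First, $G$ is not jointly $C^2$ on all of $\hX\times\R^N$. Take $\Omega=[0,1]^2$, $x_1=(\frac12,0)$, $x_2=(\frac12,1)$. Then $V_1$ is the slab $\{y_2\le\frac12+\phi_1-\phi_2\}$, and the derivative of its mass jumps from $0$ to $1$ when the slab appears at $\phi_1-\phi_2=-\frac12$. Joint regularity holds on the positive-cell set $\Oc_+$ of Appendix~\ref{app:parametric}. That is all you actually use, since balanced cells have mass $p_i>0$.

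Second, your ``equivalently'' Schur-complement argument has the sign backwards. Set $b_{ij}[\Xi](y):=(y-x_i)\cdot\xi_i-(y-x_j)\cdot\xi_j$ and $F_b[\Xi]:=\sum_{i<j}\int_{F_{ij}}\frac{\rho}{r_{ij}}\bigl(b_{ij}[\Xi]\bigr)^2\dd\HH^{d-1}$. At the balanced vector, $D^2_{XX}G[\Xi,\Xi]=\Xi\cdot P\Xi-F_b[\Xi]$, and
\[
 D^2E(X)[\Xi,\Xi]=\Xi\cdot P\Xi-F_b[\Xi]+b_X[\Xi]\cdot L_X^{-1}b_X[\Xi].
\]
The Schur correction is nonnegative precisely because the $\Phi$-block is negative definite. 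It therefore enters $\mathcal Q_X$ with a minus sign and works against the bound. What gives $\mathcal Q_X\ge0$ is the completed square:
\[
 \mathcal Q_X[\Xi,\Xi]=\min_{\Psi\in U}\sum_{i<j}\int_{F_{ij}}\frac{\rho}{r_{ij}}\bigl(b_{ij}[\Xi]+\Psi_i-\Psi_j\bigr)^2\dd\HH^{d-1},
\]
with the minimum attained at $\Psi=\dot\Phi_X[\Xi]$. In other words, $\mathcal Q_X$ is the Schur complement of a form that is jointly nonnegative in $(\Xi,\Psi)$, not merely one whose $\Phi$-block is definite. This is exactly what your primary computation (absorbing the $\dot\Phi$ terms via the balance equation) yields. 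Rely on that computation, carried out explicitly, rather than on the Schur remark.
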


The local result is complemented by global estimates on $\Xc^N$: the energy is locally Lipschitz,
semiconcave, coercive, and dissipative.  Moreover, $\Mc_*$ is nonempty and compact, and
\[
 \Mc_*\subset\hX\cap(\Omega^\circ)^N.
\]
In dimension one, Theorem~\ref{thm:one-dimensional} gives the unique minimizer in each ordering
chamber explicitly in terms of the quantiles of $\nu$.

\medskip

\noindent\textbf{Exploratory temperature control.}
Fix
\[
 0<a<1,
 \qquad
 \vartheta>0,
 \qquad
 \lambda>0.
\]
For $\varpi\in\mathcal P([a,1])$, define
\[
 \bar u(\varpi):=\int_a^1u\,\varpi(\dd u)
\]
and the entropy relative to Lebesgue measure on $[a,1]$ by
\[
 \Ent_{[a,1]}(\varpi)
 :=
 \begin{cases}
 \displaystyle\int_a^1f(u)\log f(u)\dd u,
 &\text{if }\varpi(\dd u)=f(u)\dd u,\\[1.2ex]
 +\infty,
 &\text{otherwise}.
 \end{cases}
\]
A relaxed temperature control is a progressively measurable process
$\pi=(\pi_t)_{t\ge0}$ with values in $\mathcal P([a,1])$.  It drives
\begin{equation}\label{eq:controlled-SDE-main-results}
 \dd X_t^\pi
 =-\nabla E(X_t^\pi)\dd t
 +\sqrt{2\bar u(\pi_t)}\,\dd W_t,
 \qquad
 X_0^\pi=X\in\hX,
\end{equation}
where $W$ is a standard Brownian motion in $\Xc^N$.  The discounted entropy-regularized criterion is
\begin{equation}\label{eq:cost-main-results}
 J_\lambda(X;\pi)
 :=\mathbb E_X\left[
 \int_0^\infty e^{-\vartheta t}
 \left(E(X_t^\pi)+\lambda\Ent_{[a,1]}(\pi_t)\right)\dd t
 \right],
\end{equation}
and
\[
 v_\lambda(X):=\inf_\pi J_\lambda(X;\pi),
\]
where admissibility is specified in Section~3.1.

For $q\in\mathbb R$, introduce
\begin{align}
 \mathscr H_\lambda(q)
 &:=
 \inf_{\varpi\in\mathcal P([a,1])}
 \left\{q\,\bar u(\varpi)+\lambda\Ent_{[a,1]}(\varpi)\right\}
 \notag\\
 &=-\lambda\log\left(\int_a^1e^{-uq/\lambda}\dd u\right).
 \label{eq:Hamiltonian-main-results}
\end{align}
Because $0<a<1$, the tilted density is positive on a nondegenerate interval; hence
$\mathscr H_\lambda$ is smooth, strictly increasing, and strictly concave, with
$\mathscr H_\lambda'(q)\in(a,1)$.  The exploratory HJB equation is
\begin{equation}\label{eq:exploratory-HJB-main-results}
 -\vartheta v_\lambda
 -\nabla E\cdot\nabla v_\lambda
 +E
 +\mathscr H_\lambda(\Delta v_\lambda)
 =0
 \qquad\text{on }\hX.
\end{equation}

\begin{theorem}\label{thm:HJB}
Suppose that Assumption~\ref{ass:geometry} holds and that $d\ge2$.  Then $v_\lambda$ is finite and
continuous on $\hX$, has at most quadratic growth, and is a viscosity solution of
\eqref{eq:exploratory-HJB-main-results}.  There exists $\beta\in(0,1)$ such that
\begin{equation}\label{eq:v-C2beta}
 v_\lambda\in C^{2,\beta}_{\mathrm{loc}}(\hX).
\end{equation}
Moreover,
\begin{equation}\label{eq:q-C1}
 q_\lambda:=\Delta v_\lambda\in C^1_{\mathrm{loc}}(\hX),
\end{equation}
so the HJB equation holds classically on $\hX$.

For every $X\in\hX$, the pointwise Hamiltonian problem
\begin{equation}\label{eq:pointwise-Hamiltonian-minimization}
 \inf_{\varpi\in\mathcal P([a,1])}
 \left\{
 \Delta v_\lambda(X)\,\bar u(\varpi)
 +\lambda\Ent_{[a,1]}(\varpi)
 \right\}
\end{equation}
has a unique minimizer $\varpi_\lambda^*(X)$.  It has density
\begin{equation}\label{eq:optimal-density}
 \pi_\lambda^*(u;X)
 :=
 \frac{\exp[-u\Delta v_\lambda(X)/\lambda]}
 {\displaystyle\int_a^1\exp[-s\Delta v_\lambda(X)/\lambda]\dd s},
 \qquad u\in[a,1],
\end{equation}
that is,
$\varpi_\lambda^*(X)(\dd u)=\pi_\lambda^*(u;X)\dd u$.  Its mean temperature is
\begin{equation}\label{eq:optimal-mean-temperature}
 \tau_\lambda(X)
 :=\bar u\bigl(\varpi_\lambda^*(X)\bigr)
 =\int_a^1u\pi_\lambda^*(u;X)\dd u
 =\mathscr H_\lambda'\bigl(\Delta v_\lambda(X)\bigr)
 \in(a,1).
\end{equation}
The diffusion coefficient
\begin{equation}\label{eq:h-feedback}
 h_\lambda(X):=\sqrt{2\tau_\lambda(X)}
\end{equation}
belongs to $C^1_{\mathrm{loc}}(\hX)$ and satisfies
\[
 \sqrt{2a}\le h_\lambda(X)\le\sqrt2,
 \qquad X\in\hX.
\]
For every $X\in\hX$, the feedback equation
\begin{equation}\label{eq:optimal-feedback-SDE}
 \dd X_t^*
 =-\nabla E(X_t^*)\dd t+h_\lambda(X_t^*)\dd W_t,
 \qquad X_0^*=X,
\end{equation}
has a unique global strong solution which almost surely never reaches $\Coll$.  Finally,
\begin{equation}\label{eq:optimal-relaxed-control-process}
 \pi_t^*(\dd u)
 :=\varpi_\lambda^*(X_t^*)(\dd u)
 =\pi_\lambda^*(u;X_t^*)\dd u
\end{equation}
has finite cost and is optimal:
\begin{equation}\label{eq:verification-value}
 v_\lambda(X)=J_\lambda(X;\pi^*)=\inf_\pi J_\lambda(X;\pi).
\end{equation}
\end{theorem}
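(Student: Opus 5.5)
We prove Theorem~\ref{thm:HJB} in the usual order for stochastic control: probabilistic well-posedness first, then the viscosity characterization, then interior regularity, and finally verification. We take as given Theorem~\ref{thm:C2} and the global estimates stated after it: $E$ is locally Lipschitz, semiconcave, coercive, and dissipative, in the sense $-\nabla E(X)\cdot X\le -c|X|^2+C$ wherever $\nabla E$ exists.

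\textbf{Step 1: state equation.} Fix an admissible relaxed control $\pi$. The drift $-\nabla E$ is locally Lipschitz on $\hX$ by Theorem~\ref{thm:C2}. The scalar diffusion coefficient $\sqrt{2\bar u(\pi_t)}$ is progressively measurable and takes values in $[\sqrt{2a},\sqrt2]$. This gives a unique strong solution up to the exit time from $\hX$ or explosion. Applying It\^o's formula to $|X|^2$ and using dissipativity gives nonexplosion and the moment bound $\E|X_t|^2\le C(1+|X|^2)$.

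\textbf{Step 2: collision avoidance.} To rule out reaching $\Coll$, we apply It\^o's formula to a Lyapunov function singular at the diagonals:
\[
 \psi(X)=\sum_{i<j}\phi(|x_i-x_j|),
\]
with $\phi(r)=-\log r$ for $d=2$ and $\phi(r)=r^{2-d}$ for $d\ge3$. The difference $x_i-x_j$ has isotropic noise $\sqrt{2\bar u}(W^i-W^j)$, so the second-order term of $\phi$ is nonpositive: it is zero for $d\ge3$, and for $d=2$ it is $\le 0$ up to cross-effects that we can bound.

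The drift term is the delicate point. From \eqref{eq:DE-C2-theorem-main},
\[
 (\nabla_{x_i}E/p_i-\nabla_{x_j}E/p_j)\cdot(x_i-x_j)=|x_i-x_j|^2-(c_i-c_j)\cdot(x_i-x_j).
\]
When $x_i$ and $x_j$ are close, the Laguerre cells $V_i$ and $V_j$ are separated by a hyperplane orthogonal to $x_i-x_j$, placed so that $(c_i-c_j)\cdot(x_i-x_j)\ge0$. Hence the relative drift pushes the two sites apart, up to a bounded error. This should give $\mathcal L\psi\le C(1+\psi)$ locally. Because the two sites carry unequal mass weights, we may need to work with weighted differences.

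If this pointwise estimate fails, we fall back on a comparison argument. Near a collision, the drift of $x_i-x_j$ is bounded, since $E$ is locally Lipschitz. A Girsanov transform on compacts removes the drift, reducing the question to polarity of codimension-$d$ diagonals for a nondegenerate diffusion with time-dependent scalar diffusivity. After a time change this is a Brownian motion, and the diagonals are polar for it when $d\ge2$. We expect this step to be the main obstacle.

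\textbf{Step 3: value function and viscosity property.} The cost is bounded using $0\le E\le C(1+|X|^2)$ together with the moment bound; the entropy on $[a,1]$ is bounded below by $-\log(1-a)$. This gives finiteness and quadratic growth of $v_\lambda$. Continuity follows from stability of the SDE in its initial condition, uniformly over controls on compacts of $\hX$, combined with the discount and a localization. We prove the dynamic programming principle with a measurable-selection argument on the canonical space. The standard test-function argument then shows that $v_\lambda$ is a viscosity solution, with the pointwise infimum producing $\mathscr H_\lambda$ as in \eqref{eq:Hamiltonian-main-results}.

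\textbf{Step 4: regularity.} The equation \eqref{eq:exploratory-HJB-main-results} reads $F(\Delta v)=\vartheta v+\nabla E\cdot\nabla v-E$, with $F=\mathscr H_\lambda$ concave and uniformly elliptic ($\mathscr H_\lambda'\in(a,1)$). On balls $B\Subset\hX$, the right-hand side's coefficients $\nabla E$ and $E$ are $C^1$. Evans--Krylov, or Caffarelli's theory for concave equations with H\"older data, gives $v_\lambda\in C^{2,\beta}_{\mathrm{loc}}$. Then $\Delta v=\mathscr H_\lambda^{-1}(\vartheta v+\nabla E\cdot\nabla v-E)$, and the argument of $\mathscr H_\lambda^{-1}$ is $C^1$ because $v\in C^{2,\beta}$, $E\in C^2$, and $\nabla E\in C^1$. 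Since $\mathscr H_\lambda^{-1}$ is smooth, this bootstraps $q_\lambda\in C^1_{\mathrm{loc}}$. It follows that $\tau_\lambda=\mathscr H_\lambda'(q_\lambda)$ and $h_\lambda$ are $C^1_{\mathrm{loc}}$, with values in the stated bounds. The Gibbs form of the minimizer \eqref{eq:optimal-density} follows from the Donsker--Varadhan variational formula, and uniqueness from strict convexity of the entropy.

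\textbf{Step 5: verification.} The feedback SDE \eqref{eq:optimal-feedback-SDE} has locally Lipschitz coefficients, so Steps 1--2 apply to it. For any admissible $\pi$, It\^o's formula applied to $e^{-\vartheta t}v_\lambda(X_t)$ up to localizing exit times from compacts of $\hX$ gives, via the HJB inequality, $v_\lambda\le J_\lambda(X;\pi)$. Equality holds for $\pi^*$. Removing the localization uses three facts: collisions are never reached, $v_\lambda$ has quadratic growth, and the second moments are bounded. Together these give uniform integrability, so $e^{-\vartheta T}\E v_\lambda(X_T)\to0$.
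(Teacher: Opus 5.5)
Your outline has the same architecture as the paper's proof. Your Step~2 fallback is exactly the paper's collision argument: local Girsanov removal of the drift (which is bounded on $\{|X|\le R\}$ even near $\Coll$), a time change of each $x_i-x_j$ into a $d$-dimensional Brownian motion, and point polarity for $d\ge2$. Your primary Lyapunov argument, however, fails as stated. For $p_i\ne p_j$ the radial drift of $z=x_i-x_j$ is $-p_ir+p_i(c_i-c_j)\cdot z/r+(p_j-p_i)(x_j-c_j)\cdot z/r$. The last term is of order one, has no sign, and can dominate. For instance, take $N=2$, $\Omega=[0,1]^2$, $\rho\equiv1$, $p_1=0.1$, $p_2=0.9$, $x_2=(0,\tfrac12)$ and $x_1=x_2+(r,0)$. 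The cells are $\{y_1\ge0.9\}$ and $\{y_1\le0.9\}$, and the radial drift equals $-0.31-0.1r$, so the sites are pushed together at a rate bounded away from zero. With $\phi=-\log r$ or $r^{2-d}$ this gives a generator term of order $r^{1-d}$, which is not bounded by $C(1+\psi)$. ``Weighted differences'' cannot help, since collisions concern $x_i-x_j$ and the noise is isotropic. The potential must absorb a bounded inward drift. For example, $\phi_k(r)=\int_r^1s^{1-d}e^{ks}\,\dd s$ satisfies $2\bar u\bigl(\phi_k''+\tfrac{d-1}{r}\phi_k'\bigr)+\phi_k'b_r\le(M-2ak)r^{1-d}e^{kr}\le0$ whenever the radial drift $b_r$ of $z$ satisfies $b_r\ge-M$ and $k\ge M/(2a)$.

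The step that is actually unsupported is the uniformity over controls that you assert in Step~3. Since $\nabla E$ is Lipschitz only away from $\Coll$, the coupling proof of continuity of $v_\lambda$ needs, for every compact $K\subset\hX$,
$\sup_\pi\sup_{X\in K}\P_X\bigl(\min_{i<j}\inf_{t\le T}|X^{\pi,i}_t-X^{\pi,j}_t|\le\delta\bigr)\to0$ as $\delta\downarrow0$. Qualitative polarity gives zero collision probability for each fixed control, but no bound uniform in $\pi$. Without such a bound, the continuity of $v_\lambda$, on which the viscosity and regularity steps rely, is unproved. The paper obtains it in Lemma~\ref{lem:uniform-localization-control}. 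With $M_R$ a bound for $|\nabla E|$ on $\{|X|\le R\}$, the Girsanov kernel is bounded by $K_R=M_R/\sqrt{2a}$ independently of $\pi$. Hence $\E_{\Q}[(\dd\P/\dd\Q)^p]\le\exp(p(p-1)K_R^2T/2)$, and H\"older's inequality reduces the claim to $\sup_{|z|\ge r_K}\P(\inf_{s\le4T}|z+B_s|\le\delta)\to0$, which is proved by a compactness argument. The repaired potential $\phi_k$ would give the same bound through a supermartingale estimate.

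Two further points need tightening.
\begin{itemize}
\item In Step~4, $\vartheta v+\nabla E\cdot\nabla v-E$ is H\"older data only once $\nabla v$ is known to be H\"older. Caffarelli's $C^{2,\alpha}$ theorem for concave $F(D^2u)=f$ must therefore be preceded by a $C^{1,\alpha}$ estimate for viscosity solutions of uniformly elliptic equations with linear gradient dependence. Alternatively, replace it by a general-form result such as \cite{LianWangZhang2026}, which is what the paper uses.
\item In Step~5, bounded second moments do not make $v_\lambda(X_{T\wedge\sigma_k})$ uniformly integrable in $k$ when $v_\lambda$ grows quadratically. You need a higher moment, such as $\E\sup_{t\le T}|X_t|^4<\infty$, which follows from BDG and the linear growth of $\nabla E$.
\end{itemize}
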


The extra derivative in \eqref{eq:q-C1} is not inferred from
$C^{2,\beta}_{\mathrm{loc}}$ alone.  After the HJB equation has been shown to hold pointwise, it is
solved algebraically for $\Delta v_\lambda$ by applying the smooth inverse of
$\mathscr H_\lambda$; see \eqref{eq:laplacian-inversion-detailed}.

\medskip

\noindent\textbf{Euler exploration and convergence of the running record.} Let
\begin{equation}\label{eq:generic-temperature-main-results}
 \tau:\hX\longrightarrow[a,1]
\end{equation}
be an arbitrary fixed Borel function.   Let
$(\xi_n)_{n\ge1}$ be independent standard Gaussian vectors in $\Xc^N$ and define
\begin{equation}\label{eq:euler-chain-main-results}
 \begin{cases}
 X_0^\tau=X,\\[0.4ex]
 \displaystyle
 X_{n+1}^\tau
 =X_n^\tau-\eta\nabla E(X_n^\tau)
 +\sqrt{2\eta\tau(X_n^\tau)}\,\xi_{n+1},
 \qquad n\ge0.
 \end{cases}
\end{equation}
We denote its probability law and expectation by $\mathbb P_X^\tau$ and $\mathbb E_X^\tau$.
For $n\ge0$, let
\begin{equation}\label{eq:record-index-main-results}
 \kappa_n^\tau
 :=\min\left\{
 0\le k\le n:
 E(X_k^\tau)=\min_{0\le\ell\le n}E(X_\ell^\tau)
 \right\},
\end{equation}
and define
\begin{equation}\label{eq:record-definition-main-results}
 \Record_n^\tau:=X_{\kappa_n^\tau}^\tau.
\end{equation}
Thus
\[
 E(\Record_n^\tau)=\min_{0\le k\le n}E(X_k^\tau),
\]
with ties resolved by retaining the earliest minimizing iterate.  Set
\[
 V(X):=1+|X|^2
\]
and
\begin{equation}\label{eq:eta0}
 \eta_0:=\min\left\{1,\frac{\pmin}{4\pmax^2}\right\}.
\end{equation}

\begin{theorem}\label{thm:numerical-convergence}
Suppose that Assumption~\ref{ass:geometry} holds and that $d\ge1$.  Let
$\tau:\hX\to[a,1]$ be an arbitrary fixed Borel function, let $0<\eta\le\eta_0$, and let
$(X_n^\tau)_{n\ge0}$ be defined by \eqref{eq:euler-chain-main-results}.

Then $(X_n^\tau)_{n\ge0}$ is Lebesgue-irreducible and aperiodic on $\hX$ and admits a unique
invariant probability measure $\Pi_{\eta,\tau}$.  This measure has full support on $\hX$ and finite
second moment.  Moreover, there exist constants $C<\infty$ and $r\in(0,1)$, depending on $\eta$ and $\tau$, such that
\begin{equation}\label{eq:V-geometric}
 \sup_{\substack{
 f:\hX\to\mathbb R\ \mathrm{measurable}\\
 |f|\le V
 }}
 \left|
 \mathbb E_X^\tau[f(X_n^\tau)]
 -\int_{\hX}f\,\dd\Pi_{\eta,\tau}
 \right|
 \le C V(X)r^n,
 \qquad X\in\hX,\quad n\ge0.
\end{equation}
The record process satisfies
\begin{equation}\label{eq:record-energy-convergence}
 E(\Record_n^\tau)\longrightarrow E_*
 \qquad \mathbb P_X^\tau\text{-almost surely},
\end{equation}
and
\begin{equation}\label{eq:record-distance-convergence}
 \dist(\Record_n^\tau,\Mc_*)\longrightarrow0
 \qquad \mathbb P_X^\tau\text{-almost surely}.
\end{equation}
If $\Mc_*=\{X_*\}$, then
\[
 \Record_n^\tau\longrightarrow X_*
 \qquad \mathbb P_X^\tau\text{-almost surely}.
\]
By contrast, the raw iterates do not converge to a finite limit:
\begin{equation}\label{eq:raw-not-converge}
 \mathbb P_X^\tau\left[
 \lim_{n\to\infty}X_n^\tau\text{ exists in }\Xc^N
 \right]=0.
\end{equation}
\end{theorem}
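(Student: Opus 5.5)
The proof applies the Meyn--Tweedie machinery to the chain \eqref{eq:euler-chain-main-results} and then deduces the record statements from Harris recurrence. \textbf{First} we check that the chain lives on $\hX$. For fixed $X_n$ the next iterate is Gaussian with nondegenerate covariance $2\eta\tau(X_n)I\ge 2\eta a I$, and $\Coll$ is a finite union of proper linear subspaces, hence Lebesgue-null. So $X_{n+1}\in\hX$ almost surely, and $\nabla E$ is always evaluated where Theorem~\ref{thm:C2} applies. The transition kernel $K(X,\cdot)$ has a density bounded below on compacts: for $X$ in a compact $C\subset\hX$, the mean $X-\eta\nabla E(X)$ is bounded, because $\nabla E$ is locally bounded by the global local-Lipschitz estimate. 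The variance lies in $[2\eta a,2\eta]$. Hence $K(X,A)\ge \epsilon\,\Leb(A\cap B)$ for every ball $B$, uniformly in $X\in C$. This gives at once Lebesgue-irreducibility, aperiodicity, that compact sets are small, and full support of any invariant law: positivity of the density on all of $\hX$ forces $\Pi(O)>0$ for every open $O$.

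\textbf{Second}, the drift condition. I will use $\nabla_{x_i}E=p_i(x_i-c_i(X))$ with $c_i\in\Omega$, which is bounded, and its collision-valid analogue from the splitting formula, i.e.\ the dissipativity estimate $\nabla E(X)\cdot X\ge \pmin|X|^2-C$ and $|\nabla E(X)|\le \pmax|X|+C$. Expanding gives
\[
\E\,V(X_{n+1}) = 1+|X_n|^2-2\eta\nabla E\cdot X_n+\eta^2|\nabla E|^2+2\eta\tau Nd .
\]
The coefficient of $|X_n|^2$ is $1-2\eta\pmin+2\eta^2\pmax^2$. This is $\le 1-\tfrac32\eta\pmin$ precisely when $\eta\le \pmin/(4\pmax^2)$, which is $\eta_0$. (The cross term in $|\nabla E|^2$ only requires Young's inequality with a small margin.) So $PV\le(1-c\eta)V+b\id_{\{|X|\le R\}}$. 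The set $\{|X|\le R\}\cap\hX$ is not compact in $\hX$, and I expect this to be the main obstacle. I will handle it with the $m$-step minorization above, which is uniform over $X\in\{|X|\le R\}$ including points near $\Coll$: the density bound only used boundedness of the mean and the variance bounds, not distance to $\Coll$. Therefore this sublevel set is petite. Theorem 15.0.1 of \cite{MeynTweedie2009} then yields the unique invariant $\Pi_{\eta,\tau}$, finite $V$-moment, and \eqref{eq:V-geometric}.

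\textbf{Third}, the records. Fix $\varepsilon>0$. The set $O_\varepsilon=\{X\in\hX:E(X)<E_*+\varepsilon\}$ is open by continuity of $E$, and it is nonempty because $\Mc_*\subset\hX$. Full support gives $\Pi(O_\varepsilon)>0$, and positive Harris recurrence means $O_\varepsilon$ is visited infinitely often $\P_X$-a.s. from every start. Hence $\limsup_n E(\Record_n)\le E_*+\varepsilon$, and letting $\varepsilon\downarrow0$ along a countable sequence proves \eqref{eq:record-energy-convergence}. For \eqref{eq:record-distance-convergence}, the records lie in the sublevel set $\{E\le E(X_0)\}$, which is compact by coercivity. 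Any limit point $Y$ of the records satisfies $E(Y)=E_*$ by continuity, so $\dist\to0$; uniqueness of the minimizer then gives convergence to $X_*$.

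\textbf{Finally}, for \eqref{eq:raw-not-converge}, I would argue by contradiction. Suppose $X_n\to L$ on an event of positive probability. Then on that event
\[
\sqrt{2\eta\tau(X_n)}\,\xi_{n+1}=X_{n+1}-X_n+\eta\nabla E(X_n)
\]
is bounded. A Borel--Cantelli argument avoids using continuity of $\nabla E$ at $L$, which might lie in $\Coll$. Since $\tau\ge a$ and the $\xi_{n+1}$ are i.i.d.\ Gaussians, the events $\{|\xi_{n+1}|>M\}$ occur infinitely often a.s. for every $M$, by independence and the second Borel--Cantelli lemma. Concretely, $|X_{n+1}-X_n|\to0$ while $|\nabla E(X_n)|\le \pmax|X_n|+C$ stays bounded. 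The event $|\xi_{n+1}|>M$ for $M$ larger than this bound plus one, divided by $\sqrt{2\eta a}$, happens infinitely often, and this contradicts $|X_{n+1}-X_n|\to0$.
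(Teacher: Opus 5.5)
Your proposal is correct and follows essentially the paper's route. The shared steps are the quadratic Foster--Lyapunov drift for $V=1+|X|^2$, a one-step Gaussian minorization on $\overline B_R(0)\cap\hX$ that is uniform up to $\Coll$, and Meyn--Tweedie. The records then follow from full support plus Harris recurrence, and the raw iterates fail to converge because of unbounded Gaussian noise. Your observation that $\sup_n|\xi_n|=\infty$ almost surely is a slightly slicker substitute for the paper's conditional block estimate.

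The one slip is the dissipativity bound $\nabla E(X)\cdot X\ge\pmin|X|^2-C$, which is false in general. For equal weights it fails when all sites are sent to infinity in the direction of a nonzero barycenter of $\nu$, since the deficit is $-\sum_ip_ix_i\cdot c_i(X)$, which grows linearly. Use instead $\nabla E(X)\cdot X\ge\frac{\pmin}{2}|X|^2-\frac12m_2(\nu)$ from \eqref{eq:dissipativity-supergradient}. For $\eta\le\eta_0$ this still yields $P_{\eta,\tau}V\le(1-\frac{\pmin\eta}{2})V+B_0\eta$, so the rest of your argument is unchanged.
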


The exact optimal mean temperature $\tau_\lambda$ is one admissible choice.  More generally, if
$q:\hX\to\mathbb R$ is a fixed Borel approximation of
$q_\lambda=\Delta v_\lambda$, define
\begin{equation}\label{eq:tau-q-main-results}
 \tau_q(X)
 :=
 \frac{\displaystyle\int_a^1u\,e^{-uq(X)/\lambda}\dd u}
 {\displaystyle\int_a^1e^{-uq(X)/\lambda}\dd u}
 \in(a,1).
\end{equation}
Then $q=q_\lambda$ implies $\tau_q=\tau_\lambda$.  Applying
Theorem~\ref{thm:numerical-convergence} with these two choices gives
\begin{equation}\label{eq:record-exact-feedback-convergence}
 E(\Record_n^{\tau_\lambda})\longrightarrow E_*
 \qquad \mathbb P_X^{\tau_\lambda}\text{-almost surely},
\end{equation}
and
\begin{equation}\label{eq:record-approximate-feedback-convergence}
 E(\Record_n^{\tau_q})\longrightarrow E_*
 \qquad \mathbb P_X^{\tau_q}\text{-almost surely}.
\end{equation}
The same conclusion holds for any fixed Borel approximation of $\tau_\lambda$ clipped to
$[a,1]$.

\paragraph{Comparison with deterministic Lloyd iterations.}
For $X\in\hX$, define the deterministic fixed-mass Lloyd map associated with
\eqref{eq:E-intro} by
\[
 \mathcal T_{\mathrm L}(X)
 :=\bigl(c_1(X),\ldots,c_N(X)\bigr),
 \qquad
 X^{k+1}=\mathcal T_{\mathrm L}(X^k).
\]
By \eqref{eq:DE-C2-theorem-main}, its fixed points are precisely the critical points of $E$ on
$\hX$.  The classical centroidal-Voronoi interpretation goes back to
\cite{DuFaberGunzburger1999}.  For the classical Lloyd algorithm,
\cite{DuEmelianenkoJu2006} establishes local and global convergence results, while
\cite{EmelianenkoJuRand2008} proves weak global convergence to the set of nondegenerate
fixed-point quantizers.  In a related power-diagram setting, \cite{BourneRoper2015} proves energy
decrease and a conditional sequential-convergence theorem for a generalized Lloyd algorithm.
More recently, \cite{PortalesCazellesPauwels2025} proves sequential convergence to a single
accumulation point for two variants of Lloyd's method under an analyticity assumption on the target
density.  These results establish convergence toward fixed or critical configurations under their
respective assumptions, but do not imply convergence to a global minimizer.

This distinction is already visible in the elementary case
\[
 d=N=2,
 \qquad
 p_1=p_2=\frac12,
 \qquad
 \Omega=[0,1]^2,
 \qquad
 \rho\equiv1.
\]
Proposition~\ref{prop:square-critical-points} shows that, up to exchange of the two labels, the
critical configurations are exactly
\begin{align*}
 X^{\mathrm{bt}}
 &:=\left(\left(\frac12,\frac14\right),
          \left(\frac12,\frac34\right)\right),
 &
 X^{\mathrm{lr}}
 &:=\left(\left(\frac14,\frac12\right),
          \left(\frac34,\frac12\right)\right),\\
 X^{\nearrow}
 &:=\left(\left(\frac13,\frac13\right),
          \left(\frac23,\frac23\right)\right),
 &
 X^{\searrow}
 &:=\left(\left(\frac23,\frac13\right),
          \left(\frac13,\frac23\right)\right).
\end{align*}
The first two configurations, together with their label exchanges, are the global minimizers and satisfy
\[
 E(X^{\mathrm{bt}})=E(X^{\mathrm{lr}})=\frac{5}{96},
\]
whereas the two diagonal configurations are saddle points and satisfy
\[
 E(X^{\nearrow})=E(X^{\searrow})=\frac1{18}>\frac{5}{96}.
\]
%A deterministic Lloyd iteration initialized at either saddle remains there.  By contrast, the positive-temperature Euler chain of Theorem~\ref{thm:numerical-convergence} is not trapped at a single critical configuration, and its running record converges almost surely to the global minimum.  This illustrates the global-search advantage of persistent Gaussian exploration.  It does not, by itself, assert a finite-time superiority of the HJB feedback over other positive temperature rules.
A deterministic Lloyd iteration initialized at either saddle remains there.
This suboptimal trapping is not an artifact of the two-point construction. For larger N, the energy landscape admits many stable local critical points, and the Lloyd algorithm generically converges to a suboptimal configuration when initialized in their basin of attraction. We illustrate this in Figure~\ref{fig:n8_illustration} with \(N=8\) equally weighted sites on the unit square: the Lloyd algorithm converges rapidly to a high-energy local minimizer and stagnates, while the fixed-temperature Euler–Langevin scheme escapes the local basin and its running record reaches a significantly lower energy.
By contrast, the positive-temperature Euler chain of Theorem~\ref{thm:numerical-convergence} is not trapped at a single critical configuration, and its running record converges almost surely to the global minimum. This illustrates the global-search advantage of persistent Gaussian exploration. It does not, by itself, assert a finite-time superiority of the HJB feedback over other positive temperature rules.
\begin{figure}[htbp]
  \centering
  \includegraphics[width=0.95\textwidth]{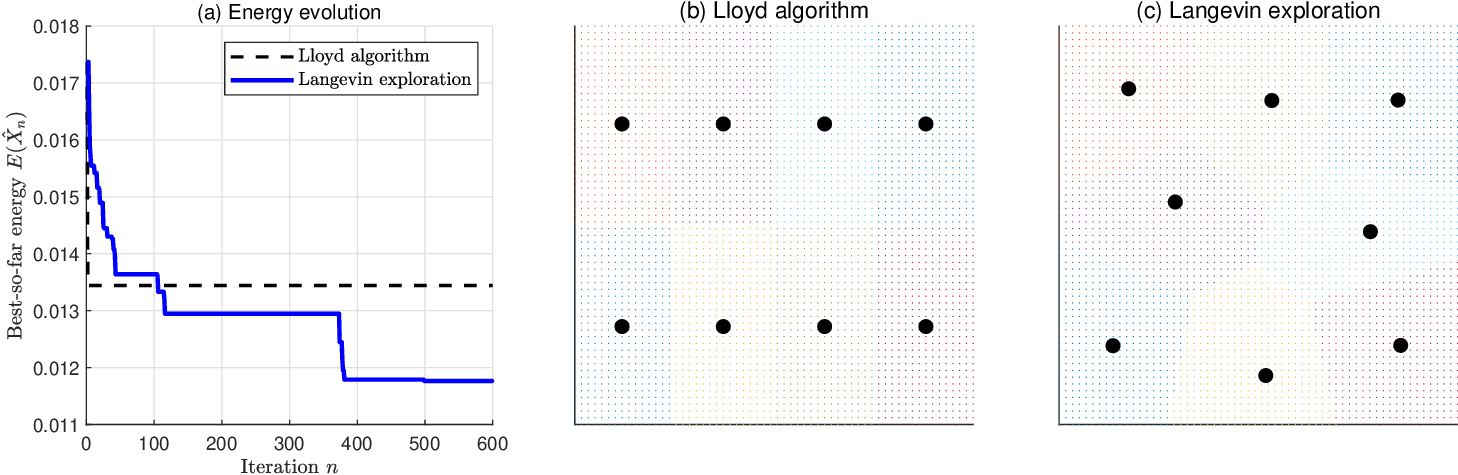}
  \caption{Numerical comparison for $N=8$ equally weighted sites
($p_i=1/8$) and the uniform target measure on
$\Omega=[0,1]^2$.  Both algorithms are initialized from the same
configuration obtained by adding a small perturbation to the regular
two-row lattice.  (a) Best-so-far energy
$\min_{0\le k\le n}E(X_k)$ as a function of the iteration number:
weighted Lloyd algorithm (black dashed line) and the fixed-temperature
Euler--Langevin scheme (blue solid line), with
$\eta=0.4$ and $\tau=5\times10^{-4}$.  The Lloyd iterates return to
the two-row stationary configuration and their energy stabilizes at a
non-global critical value.  By contrast, the Langevin dynamics escapes
the basin of this configuration, and its running-record energy decreases
in several steps until it reaches the numerically identified global
minimum.  (b) Limiting configuration produced by the Lloyd algorithm, namely the
regular two-row critical configuration.  (c) Best-record configuration produced by the Langevin scheme, providing
a numerical approximation of a global minimizer.  Its slight
irregularity reflects the fact that the record is an actual stochastic
iterate lying near the minimizing set, rather than an exactly stationary
configuration.
}
  \label{fig:n8_illustration}
\end{figure}

\begin{remark}\label{rem:main-results-separation}
Theorem~\ref{thm:HJB} and Theorem~\ref{thm:numerical-convergence} express different notions of
performance.  The former identifies the optimal relaxed feedback for the discounted criterion
$J_\lambda$.  The latter uses only dissipative confinement and uniformly positive Gaussian
exploration.  It neither proves that $\tau_\lambda$ minimizes expected hitting times or finite-time
record values, nor compares its convergence rate with constant temperatures or annealing schedules.
It also concerns homogeneous state-feedback chains and does not cover a temperature rule depending
on time or on the entire past trajectory without enlarging the state space.
\end{remark}

\subsection{Organization and notation}\label{subsec:organization-notation}

Section~\ref{sec:objective} proves the global estimates, the geometry of the minimizer set, and the
first- and second-order Laguerre formulas, followed by the explicit one-dimensional analysis and a
two-dimensional example exhibiting non-minimizing Lloyd fixed points.  Section~\ref{sec:control}
develops the relaxed control problem, controlled-diffusion estimates, canonical dynamic programming,
HJB regularity, and verification.  Section~\ref{sec:numerical} studies the Euler scheme,
finite-horizon consistency of the exact feedback, geometric ergodicity, and running-record convergence.
Appendix~\ref{app:parametric} verifies the joint semi-dual differentiation input.

For convenience, the most frequently used symbols are collected below.
\begin{center}
\renewcommand{\arraystretch}{1.15}
\begin{tabular}{@{}p{0.23\textwidth}p{0.69\textwidth}@{}}
$\Xc^N$, $\hX$, $\hOm$ & ambient configuration space, collision-free space, and collision-free configurations constrained to $\Omega^N$;\\
$\Coll$, $\Mc_*$ & collision set and set of global minimizers;\\
$U$, $\mathfrak U$ & zero-sum dual-weight space and relaxed temperature-action space $\mathcal P([a,1])$;\\
$\Phi^*$, $V_i$, $c_i$, $F_{ij}$ & balancing weights, balanced Laguerre cells, their barycenters, and their pairwise facets;\\
$\mathbb F^0$, $\mathbb F$ & raw canonical filtration and its usual augmentation;\\
$\tau_\lambda$, $\tau_q$, $\tau$ & exact HJB mean temperature, a fixed HJB approximation, and an arbitrary fixed Borel temperature rule;\\
$\Record_n^\tau$ & earliest best-so-far configuration up to time $n$.
\end{tabular}
\end{center}

\section{Regularity of the semi-discrete energy}\label{sec:objective}

This section proves Theorem~\ref{thm:C2}.  We first derive the global estimates which remain valid
at colliding configurations.  We then reduce the minimization problem to interior collision-free
configurations and perform the classical Laguerre-cell differentiation.  The final two subsections
give the explicit one-dimensional solution and a two-dimensional saddle-point example.

\subsection{Local Lipschitz continuity, semiconcavity, supergradients, and dissipativity}

\begin{proposition}\label{prop:lip}
The function $E:\Xc^N\to \R_+$ is continuous and locally Lipschitz.  More precisely, for
every $R>0$ there is $L_R<\infty$ such that
\begin{equation}\label{eq:local-lip}
 |E(X)-E(Z)|\le L_R|X-Z|,
 \qquad\text{for all } |X|\vee|Z|\le R.
\end{equation}
\end{proposition}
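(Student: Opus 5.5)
The plan is to work directly with the splitting representation \eqref{eq:E-splitting}, since it is valid on all of $\Xc^N$, including the collision set $\Coll$, and it expresses $E$ as an infimum of functions that are smooth in $X$. For a fixed splitting $\boldsymbol\nu\in\Ac_p(\nu)$, define
\[
 G_{\boldsymbol\nu}(X):=\frac12\sum_{i=1}^N\int_\Omega|x_i-y|^2\nu_i(\dd y)
 =\frac12\sum_{i=1}^N\Bigl(p_i|x_i|^2-2x_i\cdot m_i(\boldsymbol\nu)\Bigr)+\frac12 m_2(\nu).
\]
Then $E=\inf_{\boldsymbol\nu}G_{\boldsymbol\nu}$. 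The key step is a Lipschitz bound for $G_{\boldsymbol\nu}$ on the ball $\{|X|\le R\}$ that does not depend on $\boldsymbol\nu$.

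For that bound we compute $\nabla_{x_i}G_{\boldsymbol\nu}(X)=p_ix_i-m_i(\boldsymbol\nu)$. Because $\nu_i\le\nu$ is supported in the compact set $\Omega$, we have $|m_i(\boldsymbol\nu)|\le p_i D_\Omega$, where $D_\Omega:=\max_{y\in\Omega}|y|$. Hence, for $|X|\le R$,
\[
 |\nabla G_{\boldsymbol\nu}(X)|\le\Bigl(\sum_i p_i^2(|x_i|+D_\Omega)^2\Bigr)^{1/2}\le \pmax(R+D_\Omega\sqrt N)=:L_R.
\]
The ball is convex, so the mean value theorem gives $|G_{\boldsymbol\nu}(X)-G_{\boldsymbol\nu}(Z)|\le L_R|X-Z|$ for all $|X|\vee|Z|\le R$, uniformly in $\boldsymbol\nu$.

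Next we pass the bound to the infimum. Choose a minimizing splitting $\boldsymbol\nu^Z$ for $E(Z)$; it exists by the narrow compactness noted after \eqref{eq:E-splitting}, and in any case an $\varepsilon$-minimizer would suffice. Then
\[
 E(X)\le G_{\boldsymbol\nu^Z}(X)\le G_{\boldsymbol\nu^Z}(Z)+L_R|X-Z|=E(Z)+L_R|X-Z|.
\]
Exchanging $X$ and $Z$ gives \eqref{eq:local-lip}. Continuity follows immediately, and $E\ge0$ holds trivially.

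The only point needing care is that the Lipschitz constant must be uniform over all splittings. This holds because the first moments $m_i(\boldsymbol\nu)$ are bounded using only the compactness of $\Omega$ and $\nu_i(\Omega)=p_i$. No regularity of $\rho$ is used, and collisions cause no difficulty because the argument never refers to Laguerre cells.

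An equivalent route, which could serve as a cross-check, uses the triangle inequality for $W_2$ together with the coupling $\sum_i p_i\delta_{(x_i,z_i)}$. This coupling gives $W_2(\mu_X,\mu_Z)\le|X-Z|$, and then $|E(X)-E(Z)|\le\frac12(W_2(\nu,\mu_X)+W_2(\nu,\mu_Z))|X-Z|$, which is bounded on balls.
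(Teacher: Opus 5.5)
Your proof is correct, but your main argument is not the paper's. The paper's proof is exactly what you list as a cross-check, and it works purely metrically. The diagonal coupling $\sum_i p_i\delta_{(x_i,z_i)}$ gives $W_2(\mu_X,\mu_Z)\le|X-Z|$. The triangle inequality then makes $X\mapsto W_2(\nu,\mu_X)$ globally $1$-Lipschitz. Finally, the bound $W_2^2(\nu,\mu_X)\le 2R^2+2m_2(\nu)$ on the ball, combined with $a^2-b^2=(a-b)(a+b)$, closes the argument. That route is shorter and never mentions splittings. It also yields the stronger structural fact that $W_2(\nu,\mu_\cdot)$ itself is globally $1$-Lipschitz.

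Your route instead writes $E$ as an infimum over $\Ac_p(\nu)$ of quadratics $G_{\boldsymbol\nu}$. Their gradients $p_ix_i-m_i(\boldsymbol\nu)$ are bounded on balls uniformly in $\boldsymbol\nu$, and you use that an infimum of uniformly Lipschitz functions inherits the same constant. This is the representation the paper exploits immediately afterwards in Lemma~\ref{lem:DC} and Proposition~\ref{prop:supergradient-growth}. Your gradients, evaluated at an optimal splitting, are exactly the supergradients $g_i$ there. So your argument ties the Lipschitz bound directly to the semiconcavity and supergradient estimates.

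Your appeal to compactness of $\Omega$ to get $|m_i(\boldsymbol\nu)|\le p_iD_\Omega$ is valid but unnecessary. Cauchy--Schwarz, $|m_i(\boldsymbol\nu)|^2\le p_i\int|y|^2\nu_i(\dd y)$, gives the splitting-uniform bound $|\nabla G_{\boldsymbol\nu}(X)|^2\le 2\pmax^2|X|^2+2\pmax m_2(\nu)$. That is precisely \eqref{eq:growth-supergradient}, and it depends only on $m_2(\nu)$, as in the paper's metric proof.
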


\begin{proof}
For $X,Z\in\Xc^N$, the coupling
$\sum_{i=1}^Np_i\delta_{(x_i,z_i)}$ gives
\begin{equation}\label{eq:atomic-coupling-bound}
 \Wtwo^2(\mu_X,\mu_Z)\le\sum_{i=1}^Np_i|x_i-z_i|^2\le|X-Z|^2.
\end{equation}
The triangle inequality therefore yields
\[
 \big|\Wtwo(\nu,\mu_X)-\Wtwo(\nu,\mu_Z)\big|\le|X-Z|.
\]
Moreover, when $|X|\le R$,
\[
 \Wtwo^2(\nu,\mu_X)
 \le 2\sum_{i=1}^Np_i|x_i|^2+2m_2(\nu)
 \le2R^2+2m_2(\nu).
\]
The identity $a^2-b^2=(a-b)(a+b)$ now proves \eqref{eq:local-lip}.  Continuity follows.
\end{proof}

We endow
$\Xc^N$ with the weighted quadratic form
\begin{equation}\label{eq:P-norm}
 |X|_P^2:=\sum_{i=1}^Np_i|x_i|^2,
 \qquad
 P:=\operatorname{diag}(p_1I_d,\ldots,p_NI_d).
\end{equation}

\begin{lemma}\label{lem:DC}
The function
\begin{equation}\label{eq:concave-part}
 X\longmapsto E(X)-\frac12|X|_P^2
\end{equation}
is concave on $\Xc^N$.  Equivalently,
$X\mapsto |X|_P^2-2E(X)$ is convex and, for $X,Z\in\Xc^N$ and $s\in[0,1]$,
\begin{equation}\label{eq:semiconcavity-ineq}
 E((1-s)X+sZ)
 \ge(1-s)E(X)+sE(Z)-\frac{s(1-s)}2|X-Z|_P^2.
\end{equation}
\end{lemma}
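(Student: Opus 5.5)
The plan is to use the splitting representation \eqref{eq:E-splitting} and expand the square. For $\boldsymbol\nu\in\Ac_p(\nu)$ we have $\nu_i(\Omega)=p_i$ and $\sum_i\nu_i=\nu$. Hence
\[
 \frac12\sum_{i=1}^N\int_\Omega|x_i-y|^2\nu_i(\dd y)
 =\frac12|X|_P^2-\sum_{i=1}^N x_i\cdot m_i(\boldsymbol\nu)+\frac12 m_2(\nu).
\]
Taking the infimum over $\boldsymbol\nu$ then gives
\[
 E(X)-\frac12|X|_P^2
 =\frac12m_2(\nu)+\inf_{\boldsymbol\nu\in\Ac_p(\nu)}\Bigl(-\sum_{i=1}^N x_i\cdot m_i(\boldsymbol\nu)\Bigr).
\]
For each fixed $\boldsymbol\nu$, the bracket is an affine (indeed linear) function of $X$. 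It is finite because $\Omega$ is compact, so each $m_i(\boldsymbol\nu)$ is a finite vector. A pointwise infimum of affine functions is concave. The infimum is finite because $E\ge0$ (it is also attained, though that is not needed). This proves concavity of \eqref{eq:concave-part} on all of $\Xc^N$, collisions included, since the splitting formula holds everywhere. I expect no serious obstacle here. The only point to be careful about is that \eqref{eq:E-splitting} is valid at collisions, which the paper has already stated.

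For the equivalent forms, note first that $|X|_P^2-2E(X)=-2\bigl(E(X)-\frac12|X|_P^2\bigr)$, which is convex. For \eqref{eq:semiconcavity-ineq}, I will write out the concavity inequality for $g:=E-\frac12|\cdot|_P^2$ at the point $(1-s)X+sZ$. I will then use the elementary parallelogram-type identity for the quadratic form $|\cdot|_P^2$:
\[
 (1-s)|X|_P^2+s|Z|_P^2-|(1-s)X+sZ|_P^2=s(1-s)|X-Z|_P^2.
\]
Adding $\frac12$ of this identity to the concavity inequality for $g$ yields exactly \eqref{eq:semiconcavity-ineq}. Conversely, \eqref{eq:semiconcavity-ineq} for all $X,Z,s$ gives back concavity of $g$ by the same identity, which establishes the claimed equivalence.
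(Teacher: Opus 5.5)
Your proposal is correct and follows essentially the same route as the paper. Both expand the splitting representation \eqref{eq:E-splitting} to write $E-\frac12|\cdot|_P^2$ as $\frac12 m_2(\nu)$ plus an infimum of linear functions of $X$, valid at collisions as well. Both then obtain \eqref{eq:semiconcavity-ineq} from the identity for the weighted square $|\cdot|_P^2$.
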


\begin{proof}
Expanding \eqref{eq:E-splitting} gives
\begin{equation}\label{eq:semiconcavity-representation}
 E(X)=\frac12|X|_P^2+\frac12m_2(\nu)
 +\inf_{\boldsymbol\nu\in\Ac_p(\nu)}
 \left(-\sum_{i=1}^Nx_i\cdot m_i(\boldsymbol\nu)\right).
\end{equation}
The last term is the infimum of affine functions of $X$, hence is concave.  Formula
\eqref{eq:semiconcavity-ineq} follows by expanding the weighted square.
\end{proof}

For later use, define the $P$-superdifferential of $E$ at $X$ by
\begin{equation}\label{eq:superdiff}
 D_P^+E(X):=\left\{g\in\Xc^N:
 E(Z)\le E(X)+g\cdot(Z-X)+\frac12|Z-X|_P^2
 \text{ for every }Z\in\Xc^N\right\}.
\end{equation}

\begin{proposition}\label{prop:supergradient-growth}
Let $X\in\Xc^N$ and let $\boldsymbol\nu=(\nu_i)$ be any optimal splitting in
\eqref{eq:E-splitting}.  Then
\begin{equation}\label{eq:supergradient-formula}
 g_i:=p_ix_i-m_i(\boldsymbol\nu),
 \qquad i=1,\ldots,N,
\end{equation}
defines an element $g\in D_P^+E(X)$.  Every vector $g$ obtained from an optimal splitting through
\eqref{eq:supergradient-formula} satisfies
\begin{align}
 X\cdot g
 &\ge \frac12|X|_P^2-\frac12m_2(\nu)
 \ge\frac{\pmin}{2}|X|^2-\frac12m_2(\nu),
 \label{eq:dissipativity-supergradient}\\
 |g|^2
 &\le2\pmax^2|X|^2+2\pmax m_2(\nu).
 \label{eq:growth-supergradient}
\end{align}
Moreover,
\begin{equation}\label{eq:E-growth}
 \frac14|X|_P^2-\frac12m_2(\nu)
 \le E(X)
 \le |X|_P^2+m_2(\nu).
\end{equation}
\end{proposition}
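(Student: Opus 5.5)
The plan is to work directly from the representation \eqref{eq:semiconcavity-representation} used in Lemma~\ref{lem:DC}. Write $E(X)=\frac12|X|_P^2+\frac12m_2(\nu)+G(X)$, where $G(X):=\inf_{\boldsymbol\nu}\bigl(-\sum_i x_i\cdot m_i(\boldsymbol\nu)\bigr)$.

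\textbf{Supergradient.} Let $\boldsymbol\nu$ be optimal at $X$. For every $Z$, the function $G$ satisfies
\[
G(Z)\le-\sum_i z_i\cdot m_i(\boldsymbol\nu)=G(X)-\sum_i(z_i-x_i)\cdot m_i(\boldsymbol\nu),
\]
because $\boldsymbol\nu$ is admissible at $Z$ and optimal at $X$. The quadratic part expands exactly as
\[
\tfrac12|Z|_P^2=\tfrac12|X|_P^2+\sum_ip_ix_i\cdot(z_i-x_i)+\tfrac12|Z-X|_P^2.
\]
Adding the two relations gives $E(Z)\le E(X)+g\cdot(Z-X)+\frac12|Z-X|_P^2$ with $g_i=p_ix_i-m_i(\boldsymbol\nu)$, so $g\in D_P^+E(X)$.

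\textbf{Dissipativity.} Compute $X\cdot g=|X|_P^2-\sum_ix_i\cdot m_i(\boldsymbol\nu)$. Optimality of $\boldsymbol\nu$ gives
\[
E(X)=\tfrac12|X|_P^2+\tfrac12m_2(\nu)-\sum_ix_i\cdot m_i(\boldsymbol\nu)\ge0,
\]
hence $-\sum_ix_i\cdot m_i\ge-\frac12|X|_P^2-\frac12m_2(\nu)$. Substituting yields $X\cdot g\ge\frac12|X|_P^2-\frac12m_2(\nu)$. The second inequality in \eqref{eq:dissipativity-supergradient} then follows from $|X|_P^2\ge\pmin|X|^2$.

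\textbf{Growth of $g$.} Cauchy--Schwarz gives $|m_i(\boldsymbol\nu)|^2\le\nu_i(\Omega)\int|y|^2\dd\nu_i=p_i\int|y|^2\dd\nu_i$. Therefore
\[
|g|^2\le2\sum_ip_i^2|x_i|^2+2\sum_i|m_i|^2\le2\pmax^2|X|^2+2\pmax m_2(\nu),
\]
using $\sum_i\nu_i=\nu$.

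\textbf{Energy bounds.} For the upper bound, take any splitting in \eqref{eq:E-splitting} and use $|x_i-y|^2\le2|x_i|^2+2|y|^2$; this gives $E\le|X|_P^2+m_2(\nu)$. For the lower bound, use $|x_i-y|^2\ge\frac12|x_i|^2-|y|^2$ (Young: $2x\cdot y\le\frac12|x|^2+2|y|^2$) inside the optimal splitting. This gives
\[
E\ge\tfrac12\bigl(\tfrac12|X|_P^2-m_2(\nu)\bigr)=\tfrac14|X|_P^2-\tfrac12m_2(\nu).
\]

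No step is a real obstacle. The only point needing care is the supergradient step, where the optimal splitting at $X$ must be used as a competitor at $Z$; this is legitimate because $\Ac_p(\nu)$ does not depend on $X$.
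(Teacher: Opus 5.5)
Your proof is correct and follows the paper's argument. The supergradient comes from using the optimal splitting at $X$ as a competitor at $Z$, the growth bound from Cauchy--Schwarz, and the energy bounds from $|x-y|^2\le2|x|^2+2|y|^2$ and $|x-y|^2\ge\frac12|x|^2-|y|^2$. The only small variation is in the dissipativity step: you use nonnegativity of the transport cost, $|X|_P^2+m_2(\nu)-2\sum_i x_i\cdot m_i(\boldsymbol\nu)\ge0$, where the paper applies Young's inequality $x_i\cdot m_i\le\frac12p_i|x_i|^2+\frac12|m_i|^2/p_i$ followed by Cauchy--Schwarz; both routes give the same bound.
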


\begin{proof}
Use the optimal splitting for $X$ as a competitor at $Z$.  Expanding the squares yields
\[
 E(Z)\le E(X)+\sum_i\left(p_ix_i-m_i(\boldsymbol\nu)\right)\cdot(z_i-x_i)
 +\frac12|Z-X|_P^2,
\]
which proves \eqref{eq:supergradient-formula}.  By Cauchy--Schwarz,
\[
 |m_i(\boldsymbol\nu)|^2
 \le p_i\int|y|^2\nu_i(\dd y).
\]
Consequently,
\begin{align*}
 X\cdot g
 &=\sum_i p_i|x_i|^2-\sum_i x_i\cdot m_i(\boldsymbol\nu)\\
 &\ge\frac12\sum_i p_i|x_i|^2
 -\frac12\sum_i\frac{|m_i(\boldsymbol\nu)|^2}{p_i}
 \ge\frac12|X|_P^2-\frac12m_2(\nu),
\end{align*}
and \eqref{eq:growth-supergradient} follows similarly from
$|a-b|^2\le2|a|^2+2|b|^2$.  Finally,
$|x-y|^2\ge\frac12|x|^2-|y|^2$ gives the lower bound in \eqref{eq:E-growth}, while
$|x-y|^2\le2|x|^2+2|y|^2$ gives the upper bound.
\end{proof}

\begin{remark}\label{rem:bounded-gradient-Omega}
On the bounded set $\Omega^N$, every supergradient in Proposition~\ref{prop:supergradient-growth}
is uniformly bounded.  On the full space $\Xc^N$, the natural estimate is linear growth, not a
global bounded-gradient condition.  The lower bound
\eqref{eq:dissipativity-supergradient} is the key estimate for nonexplosion and ergodicity below.
\end{remark}

\subsection{Reduction of the minimization domain and proof of Theorem~\ref{thm:C2}}

The next proposition is the geometric reduction needed both for the deterministic minimization
problem and for the record-convergence argument in Section~\ref{sec:numerical}.
\begin{proposition}\label{prop:reduction}
Under Assumption~\ref{ass:geometry},
\begin{equation}\label{eq:reduction}
 \inf_{X\in\Xc^N}E(X)=\inf_{X\in\hOm}E(X).
\end{equation}
In fact, the global minimum is attained, and every global minimizer belongs to $\hX\cap(\Omega^\circ)^N\subset\hOm$.
Consequently, the minimizer set $\Mc_*$ defined in
\eqref{eq:minimizer-set} is a nonempty compact subset of $\hOm$.
\end{proposition}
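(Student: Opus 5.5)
Existence comes first. By Proposition~\ref{prop:lip}, $E$ is continuous. By the lower bound in \eqref{eq:E-growth}, $E$ is coercive, since $E(X)\ge \frac{\pmin}{4}|X|^2-\frac12 m_2(\nu)$. Sublevel sets are therefore compact, the infimum is attained, and $\Mc_*$ is a nonempty closed bounded set. The rest of the proof shows that every minimizer $X$ has (a) all $x_i\in\Omega^\circ$ and (b) pairwise distinct sites. Then $\Mc_*\subset\hX\cap(\Omega^\circ)^N\subset\hOm$, which gives \eqref{eq:reduction}. Compactness of $\Mc_*$ as a subset of $\hOm$ is then automatic, since it is compact in $\Xc^N$ and contained in $\hOm$.

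\emph{Step (a): sites lie in $\Omega^\circ$.} Fix a minimizer $X$ and an optimal splitting $\boldsymbol\nu$, and set $c_i:=m_i(\boldsymbol\nu)/p_i\in\Omega$; this holds by convexity, as $\nu_i/p_i$ is a probability measure on $\Omega$. For fixed $\boldsymbol\nu$ the cost $\frac12\sum_i\int|x_i-y|^2\nu_i(\dd y)$ is strictly minimized in $x_i$ at $c_i$. Minimality of $X$ therefore forces $x_i=c_i\in\Omega$; otherwise moving $x_i$ to $c_i$ strictly decreases $E$. Equivalently, $0$ lies in the supergradient set, and Proposition~\ref{prop:supergradient-growth} gives $g=0$. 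To upgrade to the interior, note that $\nu_i$ has mass $p_i>0$ and $\nu_i\le\rho\,\dd y$ is absolutely continuous. A barycenter of an absolutely continuous positive measure on a convex body cannot lie on the boundary. Indeed, if $c_i\in\partial\Omega$, take a supporting hyperplane $\{\ell=\ell(c_i)\}$ with $\ell\le\ell(c_i)$ on $\Omega$. Then $\int(\ell(c_i)-\ell)\,\dd\nu_i=0$ forces $\nu_i$ to be concentrated on a hyperplane, which is a Lebesgue-null set. This is a contradiction.

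\emph{Step (b): no collisions.} This is the main step. Suppose $x_1=x_2=:x$ at a minimizer. For fixed sites, optimality of the splitting implies that $\nu_1+\nu_2$ is an optimal transport of its mass to the single point $x$, jointly with the other cells. Since the cost $|x-y|^2$ is the same for labels 1 and 2, any re-splitting of $\nu_1+\nu_2$ into masses $p_1,p_2$ is also optimal. I would choose a hyperplane $H$ through $c:=x$ with normal $e$. Using continuity of the half-space mass, I would split $\mu:=\nu_1+\nu_2$ into $\tilde\nu_1,\tilde\nu_2$ of masses $p_1,p_2$, with $\tilde\nu_1$ supported in $\{(y-x)\cdot e\ge s\}$ and $\tilde\nu_2$ in the complement, for an appropriate threshold $s$ (using absolute continuity). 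The new splitting is still optimal for $X$, and its barycenters $\tilde c_1\ne \tilde c_2$ differ, since $\tilde c_1\cdot e\ge s$ and $\tilde c_2\cdot e<s$ strictly, because the pieces have positive Lebesgue mass. Step (a), applied with this alternative optimal splitting, requires $x_1=\tilde c_1$ and $x_2=\tilde c_2$. This contradicts $x_1=x_2$. Concretely, moving $x_1\to\tilde c_1$ and $x_2\to\tilde c_2$ strictly lowers the cost $\frac12\sum_i\int|x_i-y|^2\tilde\nu_i$, hence lowers $E$.

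The delicate point is the strict separation of barycenters in Step (b). To obtain it I would take $s$ to be the level at which $\mu(\{(y-x)\cdot e\ge s\})=p_1$; this level exists by continuity, since $\mu$ has no atoms on hyperplanes. On $\{\cdot=s\}$ the measure is null, so the inequalities $\tilde c_1\cdot e> s>\tilde c_2\cdot e$ hold strictly whenever both pieces have positive mass. For three or more coincident sites, the same argument applied to any pair suffices.
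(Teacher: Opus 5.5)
Your proposal is correct and follows essentially the paper's proof: the same barycenter and supporting-hyperplane argument gives $x_i=m_i(\boldsymbol\nu)/p_i\in\Omega^\circ$, and collisions are excluded by re-splitting $\nu_i+\nu_j$ along a half-space of mass $p_i$, so that the two pieces have strictly separated barycenters. The only differences are minor. You obtain existence from the coercive bound \eqref{eq:E-growth} rather than from the projection $P_\Omega$ onto $\Omega$. You also close the collision step by noting that the re-split splitting is still optimal at $X$ (when $x_i=x_j$ the cost sees only $\nu_i+\nu_j$) and then reapplying Step (a), instead of invoking the explicit variance decomposition.
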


\begin{proof}
Let $P_\Omega:\R^d\to\Omega$ be the Euclidean projection.  Convexity gives
$|P_\Omega(x)-y|\le|x-y|$ for every $y\in\Omega$.  Applying this inequality in
\eqref{eq:E-splitting} shows
\begin{equation}\label{eq:projection-lowers}
 E(P_\Omega x_1,\ldots,P_\Omega x_N)\le E(X).
\end{equation}
Thus it is enough to minimize over the compact set $\Omega^N$, and Proposition~\ref{prop:lip}
gives existence of a minimizer.

Let $X$ be a global minimizer and choose an optimal splitting
$\boldsymbol\nu=(\nu_i)\in\Ac_p(\nu)$ in \eqref{eq:E-splitting}.  Keeping this splitting fixed and
varying only $x_i$, the unique minimizer of
$x\mapsto\frac12\int|x-y|^2\nu_i(\dd y)$ is the splitting barycenter
\begin{equation}\label{eq:barycenter-splitting}
 \bar y_i(\boldsymbol\nu):=\frac{m_i(\boldsymbol\nu)}{p_i}
 =\frac1{p_i}\int y\,\nu_i(\dd y).
\end{equation}
Hence global optimality forces $x_i=\bar y_i(\boldsymbol\nu)$.  Since $\nu_i$ has positive mass
and is absolutely continuous with respect to Lebesgue measure, this barycenter lies in
$\Omega^\circ$.  Indeed, if $\bar y_i(\boldsymbol\nu)\in\partial\Omega$, a supporting hyperplane
there would force $\nu_i$ to be concentrated on that hyperplane, contrary to absolute continuity.
Thus $X\in(\Omega^\circ)^N$.

It remains to exclude collisions.  Suppose $x_i=x_j=:x$.  The preceding argument gives
\[
 \int y\,\nu_i(\dd y)=p_ix,
 \qquad
 \int y\,\nu_j(\dd y)=p_jx.
\]
Set $\eta:=\nu_i+\nu_j$.  This measure has mass $p_i+p_j$, barycenter $x$, and satisfies
$\eta\ll\Leb^d$ because $\eta\le\nu$.  Fix a unit vector $e$.  After a rotation of coordinates,
Fubini's theorem shows that the projection $e_\#\eta$ is absolutely continuous with respect to
one-dimensional Lebesgue measure.  Its distribution function is therefore continuous.  Since
$0<p_i<p_i+p_j$, there is $t\in\R$ such that
\[
 A:=\{y:e\cdot y<t\},
 \qquad
 \eta(A)=p_i,
 \qquad
 \eta(A^c)=p_j.
\]
Let $c_A$ and $c_{A^c}$ be the corresponding barycenters.  Their $e$-coordinates are strictly
ordered, hence $c_A\ne c_{A^c}$.  The variance decomposition gives
\begin{align*}
 \int|y-x|^2\eta(\dd y)
 &={}
 \int_A|y-c_A|^2\eta(\dd y)
 +\int_{A^c}|y-c_{A^c}|^2\eta(\dd y)\\
 &\quad
 +p_i|c_A-x|^2+p_j|c_{A^c}-x|^2.
\end{align*}
Replacing the two coincident sites by $c_A,c_{A^c}$ and the two pieces $\nu_i,\nu_j$ by
$\eta\restr A,\eta\restr A^c$ therefore strictly decreases the competitor cost in
\eqref{eq:E-splitting}, a contradiction.  Hence every minimizing configuration is collision free,
which proves \eqref{eq:reduction}.  Finally,
\[
 \Mc_*=E^{-1}(\{E_*\})\cap\Omega^N,
\]
so continuity of $E$ and compactness of $\Omega^N$ imply that $\Mc_*$ is compact.
\end{proof}

We now turn to differentiability on the collision-free set.  All cell notation introduced below is
used only for $X\in\hX$, where the balancing weights and Laguerre cells are well defined.

For $X\in\hX$ and $\Phi=(\phi_1,\ldots,\phi_N)\in\R^N$, define
\begin{equation}\label{eq:laguerre-main}
 V_i(X,\Phi):=\left\{y\in\Omega:
 \frac12|y-x_i|^2-\phi_i
 \le\frac12|y-x_j|^2-\phi_j\quad\text{for every }j\right\}.
\end{equation}
The semi-dual functional is
\begin{equation}\label{eq:semi-dual-main}
 \Kant(X,\Phi):=\sum_{i=1}^Np_i\phi_i+
 \int_\Omega\min_{1\le i\le N}
 \left\{\frac12|y-x_i|^2-\phi_i\right\}\rho(y)\dd y.
\end{equation}
Since $\Kant(X,\Phi+c\one)=\Kant(X,\Phi)$, we use the zero-mean normalization
\begin{equation}\label{eq:gauge-main}
 U:=\one^\perp=\left\{\Phi\in\R^N:\sum_{i=1}^N\phi_i=0\right\}.
\end{equation}
For every $X\in\hX$ there is a unique $\Phi^*(X)\in U$ such that
\begin{equation}\label{eq:balanced-main}
 \nu(V_i(X,\Phi^*(X)))=p_i,
 \qquad i=1,\ldots,N.
\end{equation}
Existence follows from standard semi-discrete duality; see, for example,
\cite[Chapter 5]{Villani2009}.  For uniqueness, the density bounds in
\eqref{eq:density-assumption-main} and the convexity of $\Omega$ imply the weighted
Poincar\'e--Wirtinger inequality required in
\cite[Theorems 1.1 and 1.4]{KitagawaMerigotThibert2019}.  Indeed, the usual $L^1$ Poincar\'e
inequality on a bounded convex body, together with the equivalence of $\nu$ and Lebesgue measure,
gives
\[
 \int_\Omega\left|f-\int_\Omega f\,\dd\nu\right|\dd\nu
 \le C_\Omega\int_\Omega|\nabla f|\dd\nu
\]
for every Lipschitz function $f$.
Hence the weighted Poincar\'e--Wirtinger condition of
\cite{KitagawaMerigotThibert2019} holds.  By
\cite[Theorem~1.4]{KitagawaMerigotThibert2019}, the semi-dual is
strictly concave on the subset of $U$ on which all Laguerre cells
have positive mass.  Since every balanced cell has mass $p_i>0$,
the normalized balancing vector is unique.

\begin{lemma}\label{lem:weight-continuity}
The normalized balancing map $X\mapsto\Phi^*(X)$ is continuous from $\hX$ to $U$.
\end{lemma}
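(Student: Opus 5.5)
The plan is a compactness-plus-uniqueness argument: show that $\Phi^*$ is locally bounded, then identify every subsequential limit as the balancing vector. Fix $X\in\hX$ and a sequence $X^k\to X$ in $\hX$. Write $\Phi^k:=\Phi^*(X^k)$.

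\emph{Step 1: local boundedness.} We need $\sup_k|\Phi^k|<\infty$. Suppose $\phi^k_i-\phi^k_j>\operatorname{diam}(\Omega)^2+\sup_{k,\ell}|x^k_\ell|^2$ for some pair $(i,j)$; the sites are bounded along the sequence, so the right side is finite. Then for every $y\in\Omega$,
\[
\tfrac12|y-x^k_j|^2-\phi^k_j>\tfrac12|y-x^k_i|^2-\phi^k_i .
\]
Hence $V_j(X^k,\Phi^k)$ is empty, or at least $\nu$-null, contradicting $\nu(V_j)=p_j>0$. So the oscillation $\max_i\phi^k_i-\min_i\phi^k_i$ is bounded by a constant depending only on $\Omega$ and $\sup_k|X^k|$. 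Together with the zero-sum normalization $\Phi^k\in U$, this bounds $|\Phi^k|$.

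\emph{Step 2: passage to the limit.} Extract a subsequence with $\Phi^k\to\Phi\in U$. We must show $\nu(V_i(X,\Phi))=p_i$ for every $i$. Since $X\in\hX$, for $i\ne j$ the tie set
\[
H_{ij}:=\Bigl\{y:\tfrac12|y-x_i|^2-\phi_i=\tfrac12|y-x_j|^2-\phi_j\Bigr\}
\]
is an affine hyperplane, because $x_i\ne x_j$. It is therefore Lebesgue-null, hence $\nu$-null. Consider a point $y$ outside $\bigcup_{i\ne j}H_{ij}$. The strict inequalities defining membership of $y$ in the open version of $V_i(X,\Phi)$ persist for $(X^k,\Phi^k)$ when $k$ is large. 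So $\id_{V_i(X^k,\Phi^k)}(y)\to\id_{V_i(X,\Phi)}(y)$ for $\nu$-a.e.\ $y$. Dominated convergence then gives
\[
\nu(V_i(X,\Phi))=\lim_k\nu(V_i(X^k,\Phi^k))=p_i .
\]

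\emph{Step 3: identification.} By the uniqueness of the normalized balancing vector, established just before the lemma via \cite[Theorem~1.4]{KitagawaMerigotThibert2019}, we get $\Phi=\Phi^*(X)$. Every subsequence of the bounded sequence $(\Phi^k)$ thus has a further subsequence converging to $\Phi^*(X)$, so $\Phi^k\to\Phi^*(X)$, which is the claimed continuity.

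The main obstacle is Step 1: the bound on the weights must be uniform near $X$ and must use only positivity of the masses, not the separation of the sites. The emptiness argument handles this with a constant depending only on $\operatorname{diam}\Omega$ and the local size of $X$. Step 2 is where collision-freeness of the limit $X$ is essential, since it makes the tie sets $\nu$-null; I expect that part to be routine.
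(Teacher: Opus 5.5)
Your proof is correct and follows essentially the same route as the paper: the bound on pairwise weight differences comes from nonemptiness of every balanced cell (the paper picks $y_j^n\in V_j(X^n,\Phi^n)$ and reads off the cell inequality, which is your contrapositive), then the zero-sum normalization, a.e.\ convergence of cell indicators off the limiting hyperplanes, dominated convergence, and uniqueness of the normalized balancing vector. One small slip: your threshold should use $\max_{y\in\Omega}|y|^2$ (or simply $\sup_{y\in\Omega,\,k,\,\ell}\tfrac12|y-x^k_\ell|^2$) instead of $\operatorname{diam}(\Omega)^2$, since $\Omega$ need not contain the origin, but this does not affect the argument.
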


\begin{proof}
Let $X^n\to X\in\hX$ and write $\Phi^n:=\Phi^*(X^n)$.  Since every balanced cell has mass
$p_i>0$, choose $y_i^n\in V_i(X^n,\Phi^n)$.  The cell inequalities at $y_i^n$ and $y_j^n$ give,
for every $i,j$,
\begin{align*}
 \frac12|y_i^n-x_i^n|^2-\frac12|y_i^n-x_j^n|^2
 &\le \phi_i^n-\phi_j^n\\
 &\le
 \frac12|y_j^n-x_i^n|^2-\frac12|y_j^n-x_j^n|^2.
\end{align*}
The sites $X^n$ remain bounded and $\Omega$ is compact, so all pairwise differences
$\phi_i^n-\phi_j^n$ are uniformly bounded.  The normalization $\sum_i\phi_i^n=0$ therefore makes
$(\Phi^n)$ bounded in $U$.

Consider a convergent subsequence, still denoted by $\Phi^n$, with limit $\Phi\in U$.  Outside the
finite union of the limiting competition hyperplanes, the minimizer in
\eqref{eq:laguerre-main} is unique and remains unchanged for all sufficiently large $n$.  Hence
\[
 \id_{V_i(X^n,\Phi^n)}(y)\longrightarrow\id_{V_i(X,\Phi)}(y)
 \quad\text{for Lebesgue-a.e. }y\in\Omega.
\]
Dominated convergence and \eqref{eq:balanced-main} yield
$\nu(V_i(X,\Phi))=p_i$ for every $i$.  By uniqueness of the normalized balancing weights,
$\Phi=\Phi^*(X)$.  Every convergent subsequence has the same limit, so the full sequence converges.
\end{proof}

We abbreviate
\begin{equation}\label{eq:optimal-cells-centroids}
 V_i(X):=V_i(X,\Phi^*(X)),
 \qquad
 c_i(X):=\frac1{p_i}\int_{V_i(X)}y\rho(y)\dd y.
\end{equation}

\begin{proposition}\label{prop:C1}
The function $E$ belongs to $C^1(\hX)$ and
\begin{equation}\label{eq:gradient-main}
 \nabla_{x_i}E(X)
 =\int_{V_i(X)}(x_i-y)\rho(y)\dd y
 =p_i\bigl(x_i-c_i(X)\bigr),
 \qquad i=1,\ldots,N.
\end{equation}
In particular, on $\hX$ the superdifferential in \eqref{eq:superdiff} is a singleton.
\end{proposition}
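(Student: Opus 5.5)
The argument combines the semiconcavity of Lemma~\ref{lem:DC} with the fact that, on $\hX$, the optimal splitting in \eqref{eq:E-splitting} is unique and equals the balanced Laguerre partition. Once the superdifferential $D_P^+E(X)$ is known to be a singleton that depends continuously on $X$, a standard property of semiconcave functions gives differentiability.

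\emph{Step 1: uniqueness of the optimal splitting on $\hX$.}
Fix $X\in\hX$ and let $\boldsymbol\nu$ be any optimal splitting. Its induced plan $\gamma_{\boldsymbol\nu}$ is optimal in $\Pi(\nu,\mu_X)$. Because $\nu\ll\Leb^d$ and the sites are distinct, semi-discrete duality shows that every optimal plan is supported on the graph of the Laguerre map $y\mapsto x_i$ for $y\in V_i(X,\Phi)$, where $\Phi$ is a maximizer of $\Kant(X,\cdot)$. Such a maximizer balances the masses, so by the uniqueness in \eqref{eq:balanced-main} we have $\Phi=\Phi^*(X)$.

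The cell boundaries lie in finitely many hyperplanes $\{y:(x_j-x_i)\cdot y=\text{const}\}$, which are nondegenerate since $x_i\ne x_j$. These hyperplanes are $\nu$-null, so the optimal plan is unique. Because the sites are distinct, the labeled decomposition is also unique: $\nu_i=\rho\,\id_{V_i(X)}\Leb^d$.

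\emph{Step 2: the superdifferential is a singleton.}
The function $f:=E-\frac12|\cdot|_P^2$ is concave, and $g\in D_P^+E(X)$ is equivalent to $g-PX\in\partial^+f(X)$. By \eqref{eq:semiconcavity-representation}, $f$ is $\frac12m_2(\nu)$ plus the infimum over the compact set $\Ac_p(\nu)$ of the affine maps $X\mapsto-\sum_i x_i\cdot m_i(\boldsymbol\nu)$, and these maps depend continuously on $\boldsymbol\nu$ in the narrow topology.

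A Danskin-type argument then shows that $\partial^+f(X)$ is the closed convex hull of the vectors $(-m_i(\boldsymbol\nu))_i$ taken over optimal $\boldsymbol\nu$. Concretely, the one-sided directional derivative equals the minimum over optimal splittings of the slope. This uses compactness, and the fact that any narrow limit of optimizers along $X+t\Xi$ is optimal at $X$.

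By Step~1 there is only one optimal splitting, so $\partial^+f(X)$ is a singleton. Hence $f$, and therefore $E$, is differentiable at $X$, with gradient $p_ix_i-m_i(\boldsymbol\nu)=\int_{V_i(X)}(x_i-y)\rho\,\dd y=p_i(x_i-c_i(X))$. This is exactly \eqref{eq:gradient-main}, and it is consistent with Proposition~\ref{prop:supergradient-growth}.

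\emph{Step 3: continuity of the gradient.}
Let $X^n\to X$ in $\hX$. By Lemma~\ref{lem:weight-continuity}, $\Phi^*(X^n)\to\Phi^*(X)$. As in the proof of that lemma, $\id_{V_i(X^n)}\to\id_{V_i(X)}$ Lebesgue-a.e. on $\Omega$. Dominated convergence then gives $c_i(X^n)\to c_i(X)$, so $\nabla E$ is continuous and $E\in C^1(\hX)$.

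Alternatively, Step~3 follows from the general fact that a concave function differentiable on an open set is automatically $C^1$ there.

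The main obstacle is Step~1, specifically excluding optimal splittings that are not of Laguerre type, so that the Danskin set really reduces to one point. I would first invoke the duality characterization from \cite{Villani2009}: the support of an optimal plan lies in the $c$-subdifferential of the Kantorovich potential. With the null-hyperplane observation this forces the Laguerre structure. If that route proved awkward, I would use a fallback. Compare an arbitrary optimal splitting with the Laguerre one using the dual inequality $\frac12|y-x_i|^2-\phi_i^*\ge\min_j(\cdots)$. Equality of the costs forces $\nu_i$ to be concentrated on $V_i(X)$ up to null sets.
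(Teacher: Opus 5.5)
Your proof is correct, but it takes a different route from the paper's.

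\textbf{The paper's route.} The paper argues on the dual side. It combines the semi-dual identity $E(X)=\Kant(X,\Phi^*(X))$ with the stationarity condition $D_\Phi\Kant(X,\Phi^*(X))=0$ to read off the envelope formula. It then gets continuity of the gradient from Lemma~\ref{lem:weight-continuity} and dominated convergence, or from the first-order case of \cite{deGournayKahnLebrat2019}.

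\textbf{Your route.} You stay on the primal side. The splitting representation \eqref{eq:semiconcavity-representation} writes $E-\frac12|\cdot|_P^2$ as an infimum of affine maps over the narrowly compact set $\Ac_p(\nu)$. Danskin's theorem then identifies its superdifferential with the convex hull of the slopes of the optimal splittings. Uniqueness of the optimal splitting on $\hX$ collapses this hull to a single point. Duality enters only in your Step~1, to identify that unique splitting with the balanced Laguerre partition. Your fallback, which uses the inequality $\frac12|y-x_i|^2-\phi_i^*\ge\min_j\{\cdots\}$ together with strong duality, is the cleanest way to do this.

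\textbf{What your route buys.}
\begin{enumerate}[label=\textup{(\roman*)},leftmargin=2.3em]
\item The singleton-superdifferential claim comes out directly rather than as a corollary.
\item The $C^1$ property follows from the convex-analysis fact that a concave function differentiable on an open set is automatically $C^1$. So Lemma~\ref{lem:weight-continuity} is not needed here.
\item You sidestep the justification that the paper's one-line envelope step tacitly requires. That step needs either a chain rule through $\Phi^*$, whose differentiability is only proved later in Theorem~\ref{thm:C2}, or a separate Danskin or sandwich argument on the dual side.
\end{enumerate}

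\textbf{What the paper's route buys.} It fits the later analysis. The semi-dual and moving-cell machinery used here is exactly what gets differentiated again to obtain the Hessian in Theorem~\ref{thm:C2}. Your Danskin argument, by contrast, is intrinsically first-order.
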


\begin{proof}
The boundaries of the Laguerre cells are contained in finitely many affine hyperplanes and are
$\nu$-null.  Hence the balanced cells induce the unique optimal splitting of $\nu$ up to null sets. The semi-dual identity
\begin{equation}\label{eq:E-dual-main}
 E(X)=\Kant(X,\Phi^*(X))
\end{equation}
and the optimality condition $D_\Phi\Kant(X,\Phi^*(X))=0$ give the envelope formula
\[
 DE(X)[\Xi]
 =\sum_{i=1}^N\int_{V_i(X)}(x_i-y)\cdot\xi_i\,\rho(y)\dd y.
\]
Continuity of this derivative under $X_n\to X\in\hX$ follows from
Lemma~\ref{lem:weight-continuity} and dominated convergence away from the limiting bisectors.
Equivalently, this is the first-order specialization of
\cite[Theorem 1 and Proposition 1.1]{deGournayKahnLebrat2019}.  This proves
\eqref{eq:gradient-main} and $C^1$ regularity.
\end{proof}

\begin{example}\label{ex:one-dimensional-collision}
Let $d=1$, $N=2$, $\Omega=[0,1]$, $\nu$ be Lebesgue measure, and
$p_1=p_2=1/2$.  A direct calculation gives
\begin{equation}\label{eq:collision-example}
 E(x_1,x_2)=\frac14(x_1^2+x_2^2)
 -\frac18\bigl(\min(x_1,x_2)+3\max(x_1,x_2)\bigr)+\frac16.
\end{equation}
Thus $E$ is smooth on the two ordering chambers but is not differentiable on the diagonal
$x_1=x_2$.  This shows why all classical Hessian statements must be formulated on $\hX$.
\end{example}

We first fix the notation used in the second-order formula.  We identify
$(\R^d)^N$ with $\R^{dN}$ and use the Euclidean scalar product
\[
 \Xi\cdot\Theta:=\sum_{i=1}^N\xi_i\cdot\theta_i,
 \qquad
 \Xi=(\xi_1,\ldots,\xi_N),\quad
 \Theta=(\theta_1,\ldots,\theta_N).
\]
We write $\HH^{d-1}$ for the standard $(d-1)$-dimensional Hausdorff measure,
normalized so that its restriction to every $(d-1)$-dimensional affine subspace agrees with the
usual $(d-1)$-dimensional Lebesgue measure.  If $F\subset\R^d$ is Borel, the restricted measure
$\HH^{d-1}\restr F$ is defined by
\begin{equation}\label{eq:restricted-Hausdorff-main}
 (\HH^{d-1}\restr F)(A):=\HH^{d-1}(A\cap F),
 \qquad A\subset\R^d\text{ Borel}.
\end{equation}
Thus, for every Borel function $g$ which is integrable on $F$,
\[
 \int_F g(y)\dd\HH^{d-1}(y)
 :=\int_{\R^d}g(y)\,(\HH^{d-1}\restr F)(\dd y)
 =\int_{\R^d}\id_F(y)g(y)\dd\HH^{d-1}(y).
\]
Accordingly, every integral over a facet $F_{ij}(X)$ below is a surface integral with respect to
$\HH^{d-1}$ restricted to that facet, and not a $d$-dimensional Lebesgue integral.  This is the
meaning of the sentence ``every integral over $F_{ij}(X)$ is taken with respect to
$\HH^{d-1}\restr F_{ij}(X)$.''

For $X\in\hX$ and $i\ne j$, define the balanced facet and the distance between the corresponding
sites by
\begin{equation}\label{eq:facets-main}
 F_{ij}(X):=\overline{V_i(X)}\cap\overline{V_j(X)},
 \qquad
 r_{ij}(X):=|x_i-x_j|>0.
\end{equation}
Notice that $F_{ij}(X)=F_{ji}(X)$ and $r_{ij}(X)=r_{ji}(X)$.  The set
$F_{ij}(X)$ may have zero $\HH^{d-1}$-measure; in that case every integral over this facet is
understood to be zero.  Set
\begin{equation}\label{eq:facet-weights-main}
 w_{ij}(X):=\int_{F_{ij}(X)}\frac{\rho(y)}{r_{ij}(X)}\dd\HH^{d-1}(y),
 \qquad i\ne j,
 \qquad
 w_{ii}(X):=0.
\end{equation}
Then $w_{ij}(X)=w_{ji}(X)\ge0$.  We introduce the linear map
\begin{equation}\label{eq:L-main}
 L_X:\R^N\longrightarrow\R^N,
 \qquad
 (L_X\Phi)_i:=\sum_{j\ne i}w_{ij}(X)(\Phi_i-\Phi_j),
 \quad i=1,\ldots,N.
\end{equation}
Equivalently, its matrix entries are
\begin{equation}\label{eq:L-matrix-main}
 (L_X)_{ii}=\sum_{j\ne i}w_{ij}(X),
 \qquad
 (L_X)_{ij}=-w_{ij}(X)\quad(i\ne j).
\end{equation}
The operator $L_X$ is the weighted graph Laplacian associated with the
balanced Laguerre tessellation.  It is also precisely the derivative,
with respect to the dual weights, of the vector of cell-mass
constraints; see \eqref{eq:DphiM-L} below.  For every
$\Phi\in\R^N$, symmetry of the weights gives
\begin{align}
 \sum_{i=1}^N(L_X\Phi)_i&=0,
 \label{eq:L-range-U-main}\\
 \Phi\cdot L_X\Phi
 &=\sum_{1\le i<j\le N}
 w_{ij}(X)(\Phi_i-\Phi_j)^2
 \ge0.
 \label{eq:L-quadratic-main}
\end{align}
Hence $L_X(\R^N)\subset U$ and, in particular, $L_X$ maps
$U$ into $U$.  Moreover, $L_X\one=0$.

We now prove that the constant vectors are the only elements of the
kernel.  Recall that
\begin{equation}\label{eq:kernel-definition-main}
 \ker L_X:=\{\Phi\in\R^N:L_X\Phi=0\},
\end{equation}
and set
\begin{equation}\label{eq:Rone-definition-main}
 \R\one:=\{c\one:c\in\R\},
 \qquad
 \one=(1,\ldots,1).
\end{equation}

For fixed $X\in\hX$, the gradient of the semi-dual with respect to the
weights is
\[
 D_\Phi\Kant(X,\Phi)
 =
 \bigl(p_i-\nu(V_i(X,\Phi))\bigr)_{i=1}^N.
\]
Proposition~\ref{prop:joint-C2-semidual}, together with
\eqref{eq:DphiM-app}, therefore gives
\begin{equation}\label{eq:weight-Hessian-Laplacian}
 D_{\Phi\Phi}^2\Kant(X,\Phi^*(X))
 =
 -L_X.
\end{equation}

We apply the strong-concavity estimate of
\cite[Theorem~5.1]{KitagawaMerigotThibert2019}.  To match the sign
convention of that reference, one sets $\psi=-\Phi$; the corresponding
Kantorovich functional is then exactly
$\Phi\mapsto\Kant(X,\Phi)$, and the change $\Phi\mapsto-\Phi$ leaves
the Hessian quadratic form unchanged.  For the quadratic cost, the
regularity, twist, and quasi-convexity assumptions of that theorem are
satisfied: the sites are distinct because $X\in\hX$, the competition
functions are affine, and the $c$-convexity condition reduces to the
ordinary convexity of $\Omega$.  The weighted
Poincar\'e--Wirtinger condition required there was established above.

At the balanced vector $\Phi^*(X)$, every cell has mass
\[
 \nu(V_i(X,\Phi^*(X)))=p_i\ge\pmin.
\]
Thus $\Phi^*(X)$ belongs to the positive-cell set of
\cite[Theorem~5.1]{KitagawaMerigotThibert2019} with, for instance,
the lower mass threshold $\varepsilon=\pmin/2$.  Consequently, there
exists a constant $\gamma_X>0$ such that
\begin{equation}\label{eq:L-coercive-on-U}
 \Psi\cdot L_X\Psi
 =
 -D_{\Phi\Phi}^2\Kant(X,\Phi^*(X))[\Psi,\Psi]
 \ge
 \gamma_X|\Psi|^2,
 \qquad
 \Psi\in U.
\end{equation}
The constant $\gamma_X$ may depend on the fixed configuration $X$;
only its positivity is needed here.

Let now $\Phi\in\ker L_X$, and write
\[
 \bar\phi:=\frac1N\sum_{i=1}^N\phi_i,
 \qquad
 \Psi:=\Phi-\bar\phi\,\one\in U.
\]
Since $L_X\one=0$, one has $L_X\Psi=L_X\Phi=0$.  Applying
\eqref{eq:L-coercive-on-U} gives $\Psi=0$, and hence
$\Phi=\bar\phi\,\one$.  The reverse inclusion follows from
$L_X\one=0$.  Therefore
\begin{equation}\label{eq:L-kernel-main}
 \ker L_X=\R\one.
\end{equation}

Finally, $L_X$ maps $U$ into $U$, and
\eqref{eq:L-coercive-on-U} shows that its restriction to $U$ is
injective.  Since $U$ is finite-dimensional, this restriction is
bijective.  Hence
\begin{equation}\label{eq:L-isomorphism-main}
 L_X|_U:U\longrightarrow U
\end{equation}
is an isomorphism.

For a direction $\Xi=(\xi_1,\ldots,\xi_N)\in(\R^d)^N$, define
$b_X[\Xi]=(b_{X,1}[\Xi],\ldots,b_{X,N}[\Xi])\in\R^N$ by
\begin{equation}\label{eq:bX-main}
 b_{X,i}[\Xi]:=\sum_{j\ne i}
 \int_{F_{ij}(X)}\frac{\rho(y)}{r_{ij}(X)}
 \left((y-x_i)\cdot\xi_i-(y-x_j)\cdot\xi_j\right)
 \dd\HH^{d-1}(y).
\end{equation}
The contribution of the oriented facet $(i,j)$ to $b_{X,i}[\Xi]$ is the opposite of its
contribution to $b_{X,j}[\Xi]$; consequently,
\begin{equation}\label{eq:bX-in-U-main}
 \sum_{i=1}^Nb_{X,i}[\Xi]=0,
 \qquad\text{and hence}\qquad b_X[\Xi]\in U.
\end{equation}
By \eqref{eq:L-isomorphism-main}, there is a unique vector
$\dot\Phi_X[\Xi]\in U$ satisfying
\begin{equation}\label{eq:linearized-weights-main}
 L_X\dot\Phi_X[\Xi]=-b_X[\Xi].
\end{equation}
Theorem~\ref{thm:C2} will show that
$\dot\Phi_X[\Xi]=D\Phi^*(X)[\Xi]$.  Finally, for $y\in F_{ij}(X)$, put
\begin{equation}\label{eq:Q-main}
 Q_{ij}^X[\Xi](y):=(y-x_i)\cdot\xi_i-(y-x_j)\cdot\xi_j
 +\dot\Phi_{X,i}[\Xi]-\dot\Phi_{X,j}[\Xi].
\end{equation}
Thus $Q_{ji}^X[\Xi]=-Q_{ij}^X[\Xi]$ on the common facet.

We also make explicit the differential notation.  If $E$ is Fr\'echet differentiable at
$X\in\hX$, its derivative is the linear functional
\begin{equation}\label{eq:DE-definition-main}
 DE(X):(\R^d)^N\longrightarrow\R,
 \qquad
 DE(X)[\Xi]=\sum_{i=1}^N\nabla_{x_i}E(X)\cdot\xi_i.
\end{equation}
Equivalently,
$DE(X)[\Xi]=\nabla E(X)\cdot\Xi$, where
\begin{equation}\label{eq:full-gradient-definition-main}
 \nabla E(X):=(\nabla_{x_1}E(X),\ldots,\nabla_{x_N}E(X))\in(\R^d)^N.
\end{equation}
If $DE$ is differentiable at $X$, the second derivative is the symmetric bilinear map
\begin{equation}\label{eq:D2E-definition-main}
 D^2E(X):(\R^d)^N\times(\R^d)^N\longrightarrow\R,
 \qquad
 D^2E(X)[\Xi,\Theta]:=D(DE)(X)[\Xi][\Theta].
\end{equation}
We write $D^2E(X)\preccurlyeq P$ when
\begin{equation}\label{eq:Hessian-order-definition-main}
 D^2E(X)[\Xi,\Xi]\le \Xi\cdot P\Xi
 =\sum_{i=1}^Np_i|\xi_i|^2
 \qquad\text{for every }\Xi\in(\R^d)^N.
\end{equation}

The derivative of the balancing weights is characterized by
\begin{equation}\label{eq:Dphi-C2-theorem-main}
 D\Phi^*(X)[\Xi]=\dot\Phi_X[\Xi],
\end{equation}
and the explicit facet representation announced in Theorem~\ref{thm:C2} is
\begin{equation}\label{eq:hessian-facet-main}
 D^2E(X)[\Xi,\Theta]
 =\sum_{i=1}^Np_i\,\xi_i\cdot\theta_i
 -\sum_{1\le i<j\le N}
 \int_{F_{ij}(X)}\frac{\rho(y)}{r_{ij}(X)}
 Q_{ij}^X[\Xi](y)Q_{ij}^X[\Theta](y)
 \dd\HH^{d-1}(y).
\end{equation}
In particular, the nonnegative bilinear form $\mathcal Q_X$ in
Theorem~\ref{thm:C2} is the second term on the right-hand side of
\eqref{eq:hessian-facet-main}.

\begin{remark}[Symmetry and gauge invariance]\label{rem:hessian-invariances}
Formula~\eqref{eq:hessian-facet-main} is manifestly symmetric in $(\Xi,\Theta)$.  It is also
independent of the additive dual gauge: replacing the derivative of the balancing weights by
$\dot\Phi_X[\Xi]+c\one$ leaves every difference
$\dot\Phi_{X,i}[\Xi]-\dot\Phi_{X,j}[\Xi]$, and hence every $Q_{ij}^X[\Xi]$, unchanged.  No
common-translation invariance should be expected, because the target measure $\nu$ and its support
remain fixed when all sites are translated.
\end{remark}

\begin{proof}[Proof of Theorem~\ref{thm:C2}]
We proceed in five steps.

\medskip
\noindent\textit{Step 1: the mass and moment maps and their first variations.}
For $(X,\Phi)\in\hX\times U$, define
\begin{equation}\label{eq:mass-moment-main}
 M_i(X,\Phi):=\int_{V_i(X,\Phi)}\rho(y)\dd y-p_i,
 \qquad
 B_i(X,\Phi):=\int_{V_i(X,\Phi)}y\rho(y)\dd y.
\end{equation}
Since the cells form a partition up to null sets and $\sum_i p_i=1$, one has
$\sum_iM_i(X,\Phi)=0$; thus $M(X,\Phi)\in U$.  Every balanced cell at
$(X,\Phi^*(X))$ has mass $p_i>0$.  Proposition~\ref{prop:joint-C2-semidual} shows that, in a
neighborhood of each such point, the semi-dual $\Kant$ is jointly $C^2$ and the maps $M$ and $B$
are jointly $C^1$.

For clarity, we recall the moving-cell formula used below.  Given directions
$\Xi=(\xi_i)\in(\R^d)^N$ and $\Psi=(\Psi_i)\in U$, set
\[
 h_{ij}(X,\Phi;y)
 :=\frac12|y-x_i|^2-\Phi_i-\frac12|y-x_j|^2+\Phi_j.
\]
The cell $V_i(X,\Phi)$ is the intersection of the half-spaces $h_{ij}\le0$.  Along a parameter
curve with velocity $(\Xi,\Psi)$, the normal velocity of the facet $h_{ij}=0$, measured in the
outward normal direction of $V_i$, is
\begin{equation}\label{eq:normal-velocity-main}
 \frac{(y-x_i)\cdot\xi_i-(y-x_j)\cdot\xi_j+\Psi_i-\Psi_j}
 {|x_i-x_j|}.
\end{equation}
The outer boundary $\partial\Omega$ is fixed, and intersections of two or more competition
hyperplanes have zero $\HH^{d-1}$-measure.  If $\Oc$ is any open neighborhood of $\Omega$, the moving-domain formula gives, for
$q\in C^1(\Oc;\R^k)$,
\begin{align}
 &D\left(\int_{V_i(X,\Phi)}q(y)\rho(y)\dd y\right)[\Xi,\Psi]\notag\\
 &\quad=\sum_{j\ne i}\int_{F_{ij}(X,\Phi)}q(y)\rho(y)
 \frac{(y-x_i)\cdot\xi_i-(y-x_j)\cdot\xi_j+\Psi_i-\Psi_j}
 {|x_i-x_j|}\dd\HH^{d-1}(y).
 \label{eq:moving-cell-main-proof}
\end{align}
The differentiability and continuity required to justify this formula are proved in
Proposition~\ref{prop:joint-C2-semidual}; formula
\eqref{eq:moving-cell-main-proof} is also the Euclidean specialization of
\cite[Theorem~1 and Proposition~1.1]{deGournayKahnLebrat2019}.

\medskip
\noindent\textit{Step 2: differentiability of the balancing weights.}
Taking $q=1$, $\Xi=0$, and $\Psi\in U$ in
\eqref{eq:moving-cell-main-proof} yields
\begin{align}
 D_\Phi M_i(X,\Phi^*(X))[\Psi]
 &=\sum_{j\ne i}\int_{F_{ij}(X)}\frac{\rho(y)}{r_{ij}(X)}
 (\Psi_i-\Psi_j)\dd\HH^{d-1}(y)\notag\\
 &=(L_X\Psi)_i.
 \label{eq:DphiM-L}
\end{align}
Thus
$D_\Phi M(X,\Phi^*(X))=L_X|_U:U\to U$, which is invertible by
\eqref{eq:L-isomorphism-main}.  The implicit-function theorem applied to
\[
 M(X,\Phi^*(X))=0
\]
therefore gives $\Phi^*\in C^1(\hX;U)$.  Differentiating this identity in the direction $\Xi$ gives
\begin{equation}\label{eq:implicit-mass-derivative-main}
 D_XM(X,\Phi^*(X))[\Xi]+L_XD\Phi^*(X)[\Xi]=0.
\end{equation}
Taking $q=1$ and $\Psi=0$ in \eqref{eq:moving-cell-main-proof} shows that the first term in
\eqref{eq:implicit-mass-derivative-main} is precisely $b_X[\Xi]$.  Uniqueness in
\eqref{eq:linearized-weights-main} consequently gives
\[
 D\Phi^*(X)[\Xi]=\dot\Phi_X[\Xi],
\]
which proves \eqref{eq:Dphi-C2-theorem-main}.

\medskip
\noindent\textit{Step 3: differentiability of the centroids and of the gradient.}
Taking $q(y)=y$ and
$\Psi=D\Phi^*(X)[\Xi]=\dot\Phi_X[\Xi]$ in
\eqref{eq:moving-cell-main-proof} gives
\begin{equation}\label{eq:Dcentroid-main}
 Dc_i(X)[\Xi]
 =\frac1{p_i}\sum_{j\ne i}
 \int_{F_{ij}(X)}y\rho(y)
 \frac{Q_{ij}^X[\Xi](y)}{r_{ij}(X)}
 \dd\HH^{d-1}(y).
\end{equation}
The map $c_i$ is therefore $C^1$ on $\hX$.  Proposition~\ref{prop:C1} gives
\[
 \nabla_{x_i}E(X)=p_i(x_i-c_i(X)),
\]
so the full gradient map $\nabla E:\hX\to(\R^d)^N$ is $C^1$.  Hence
$E\in C^2(\hX)$, and the componentwise gradient formula is exactly
\eqref{eq:DE-C2-theorem-main}.  Differentiating it gives
\begin{equation}\label{eq:Hessian-centroid-main}
 D^2E(X)[\Xi,\Theta]
 =\sum_{i=1}^Np_i\xi_i\cdot\theta_i
 -\sum_{i=1}^Np_iDc_i(X)[\Xi]\cdot\theta_i.
\end{equation}

\medskip
\noindent\textit{Step 4: symmetrization of the boundary terms.}
For brevity, write $Q_{ij}[\Xi]=Q_{ij}^X[\Xi]$.  Substituting
\eqref{eq:Dcentroid-main} into the last term of
\eqref{eq:Hessian-centroid-main} and pairing the two orientations of each facet gives
\begin{align}
 \sum_{i=1}^Np_iDc_i(X)[\Xi]\cdot\theta_i
 &=\sum_{1\le i<j\le N}\int_{F_{ij}(X)}\frac{\rho(y)}{r_{ij}(X)}
 Q_{ij}[\Xi](y)\,y\cdot(\theta_i-\theta_j)
 \dd\HH^{d-1}(y).
 \label{eq:hessian-paired-main}
\end{align}
Indeed, the $(j,i)$ contribution equals the negative of the $(i,j)$ contribution because
$Q_{ji}[\Xi]=-Q_{ij}[\Xi]$.

Differentiating each balanced mass constraint
$M_i(X,\Phi^*(X))=0$ in the direction $\Xi$ yields
\begin{equation}\label{eq:linearized-mass-zero-main}
 \sum_{j\ne i}\int_{F_{ij}(X)}
 \frac{\rho(y)}{r_{ij}(X)}Q_{ij}[\Xi](y)\dd\HH^{d-1}(y)=0,
 \qquad i=1,\ldots,N.
\end{equation}
Multiply the $i$th identity by
$x_i\cdot\theta_i-\dot\Phi_{X,i}[\Theta]$ and sum over $i$.  Pairing the orientations gives
\begin{align}
 0={}&\sum_{1\le i<j\le N}\int_{F_{ij}(X)}\frac{\rho(y)}{r_{ij}(X)}Q_{ij}[\Xi](y)
 \Bigl[x_i\cdot\theta_i-x_j\cdot\theta_j
       -\dot\Phi_{X,i}[\Theta]+\dot\Phi_{X,j}[\Theta]\Bigr]
 \dd\HH^{d-1}(y).
 \label{eq:mass-cancellation-hessian-main}
\end{align}
Subtracting \eqref{eq:mass-cancellation-hessian-main} from
\eqref{eq:hessian-paired-main} replaces the second factor by
\begin{align*}
 &y\cdot(\theta_i-\theta_j)
 -x_i\cdot\theta_i+x_j\cdot\theta_j
 +\dot\Phi_{X,i}[\Theta]-\dot\Phi_{X,j}[\Theta]\\
 &\qquad=(y-x_i)\cdot\theta_i-(y-x_j)\cdot\theta_j
 +\dot\Phi_{X,i}[\Theta]-\dot\Phi_{X,j}[\Theta]\\
 &\qquad=Q_{ij}^X[\Theta](y).
\end{align*}
Substitution into \eqref{eq:Hessian-centroid-main} proves
\eqref{eq:hessian-facet-main}.

\medskip
\noindent\textit{Step 5: the upper bound and compact-set estimates.}
Taking $\Theta=\Xi$ in \eqref{eq:hessian-facet-main}, every facet term in the second sum is
nonnegative.  Hence
\[
 D^2E(X)[\Xi,\Xi]
 \le\sum_{i=1}^Np_i|\xi_i|^2=\Xi\cdot P\Xi,
\]
which is precisely \eqref{eq:hessian-upper-P}.  Finally, the joint $C^1$ regularity of $M$ and $B$,
the continuity of $X\mapsto L_X|_U$, and continuity of matrix inversion on the set of invertible
linear maps show that $D\Phi^*$ and $D^2E$ are continuous on $\hX$.  A continuous map is bounded and
uniformly continuous on every compact subset $K\Subset\hX$, completing the proof.
\end{proof}

\begin{remark}\label{rem:known-C2}
Theorem~\ref{thm:C2} uses only the joint $C^2$ result explicitly proved in
\cite{deGournayKahnLebrat2019}.  A stronger joint $C^{2,\alpha}$ statement would require a separate
parameter-uniform H\"older analysis for rotating and translating Laguerre facets; it is not needed
for the feedback construction below.
\end{remark}

\subsection{The explicit one-dimensional problem}\label{sec:one-dimensional}

The following result is independent of the stochastic-control arguments.  It is included because,
in one dimension, monotone transport reduces the nonconvex problem to finitely many strictly convex
ordering chambers.

In this subsection only, let $d=1$, let $\Omega=[\ell,r]$, and let
$F(y):=\nu((-\infty,y])$.  Under Assumption~\ref{ass:geometry}, $F$ is continuous and strictly
increasing on $[\ell,r]$, and we write $F^{-1}$ for its quantile function.

For a permutation $\sigma$ of $\{1,\ldots,N\}$, consider the ordering chamber
\begin{equation}\label{eq:ordering-chamber}
 \Dc_\sigma:=\{X\in\R^N:x_{\sigma(1)}<\cdots<x_{\sigma(N)}\}.
\end{equation}
Set
\begin{equation}\label{eq:cumulative-masses-one-d}
 s_0^\sigma:=0,
 \qquad
 s_k^\sigma:=\sum_{\ell=1}^kp_{\sigma(\ell)},
 \qquad
 q_k^\sigma:=F^{-1}(s_k^\sigma),
 \quad k=0,\ldots,N,
\end{equation}
where $q_0^\sigma=\ell$ and $q_N^\sigma=r$.  Define the quantile intervals and their barycenters by
\begin{equation}\label{eq:quantile-cells-one-d}
 I_k^\sigma:=(q_{k-1}^\sigma,q_k^\sigma],
 \qquad
 \bar x_{\sigma(k)}^\sigma
 :=\frac1{p_{\sigma(k)}}\int_{I_k^\sigma}y\nu(\dd y).
\end{equation}

\begin{theorem}\label{thm:one-dimensional}
For every permutation $\sigma$ and every $X\in\Dc_\sigma$,
\begin{equation}\label{eq:E-one-dimensional}
 E(X)=\frac12\sum_{k=1}^N
 \int_{I_k^\sigma}|y-x_{\sigma(k)}|^2\nu(\dd y).
\end{equation}
The restriction of $E$ to $\Dc_\sigma$ extends to a strictly convex quadratic function of $X$, and
its unique minimizer is $\bar X^\sigma=(\bar x_1^\sigma,\ldots,\bar x_N^\sigma)$ defined in
\eqref{eq:quantile-cells-one-d}.  Consequently,
\begin{equation}\label{eq:global-one-d}
 E_*=\min_{\sigma\in\mathfrak S_N}E(\bar X^\sigma),
 \qquad
 \Mc_*=\left\{\bar X^\sigma:\sigma\in\mathfrak S_N,
 E(\bar X^\sigma)=E_*\right\}.
\end{equation}
If the minimizing ordering is unique, then the labeled global minimizer is unique.  For equal
weights the minimizer is unique only modulo permutations of labels, although the optimal unlabeled
atomic measure is unique.
\end{theorem}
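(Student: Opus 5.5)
The plan is to use one-dimensional monotone optimal transport. In $d=1$, for the quadratic cost, the optimal plan between $\nu$ and any probability measure $\mu$ on $\R$ is the comonotone (quantile) coupling $(F^{-1},G^{-1})_\#\Leb^1\restr[0,1]$, where $G$ is the distribution function of $\mu$.

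\emph{Step 1: the energy formula on $\Dc_\sigma$.} Fix $X\in\Dc_\sigma$. Then $\mu_X$ has distinct ordered atoms $x_{\sigma(1)}<\cdots<x_{\sigma(N)}$, and its quantile function equals $x_{\sigma(k)}$ on $(s_{k-1}^\sigma,s_k^\sigma]$. The comonotone plan therefore sends the quantile interval $I_k^\sigma=(q_{k-1}^\sigma,q_k^\sigma]$ to $x_{\sigma(k)}$. This uses that $F$ is continuous and strictly increasing on $[\ell,r]$, so $F^{-1}$ maps $(s_{k-1},s_k]$ onto $I_k^\sigma$ and $\nu(I_k^\sigma)=p_{\sigma(k)}$. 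Plugging this plan into the definition of $W_2$ gives \eqref{eq:E-one-dimensional}.

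If one prefers to stay inside the paper's splitting language \eqref{eq:E-splitting}, there is an alternative argument: any splitting that is not monotone can be uncrossed. If $y<y'$ are sent to $x_i>x_j$, exchanging them does not increase the cost, since
\[
|y-x_j|^2+|y'-x_i|^2\le|y-x_i|^2+|y'-x_j|^2 \iff (y'-y)(x_i-x_j)\ge0 .
\]
I would cite the standard one-dimensional result, for instance \cite{Villani2009}, rather than redo this uncrossing.

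\emph{Step 2: convexity and the minimizer in each chamber.} The right side of \eqref{eq:E-one-dimensional} is defined for all $X\in\R^N$. Expanding it gives
\[
\tfrac12\sum_k\Bigl(p_{\sigma(k)}x_{\sigma(k)}^2-2x_{\sigma(k)}\int_{I_k^\sigma}y\,\dd\nu+\int_{I_k^\sigma}y^2\,\dd\nu\Bigr),
\]
which is a separable quadratic with Hessian $P\succ0$. It is therefore strictly convex, and its unique minimizer is $x_{\sigma(k)}=\bar x_{\sigma(k)}^\sigma$.

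This minimizer lies in the open chamber $\Dc_\sigma$. Each interval $I_k^\sigma$ has positive length because $p_i>0$, and $\nu$ has positive density. Hence the barycenter lies strictly inside $I_k^\sigma$, so $\bar x_{\sigma(1)}^\sigma<\cdots<\bar x_{\sigma(N)}^\sigma$. The extension's minimizer is thus attained in $\Dc_\sigma$ and is the unique minimizer of $E|_{\Dc_\sigma}$. The main care point is that "minimizer" here refers to the restriction; it cannot be read off from a formula on the closure, which is why this interiority check matters.

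\emph{Step 3: global statement \eqref{eq:global-one-d}.} By Proposition~\ref{prop:reduction}, every global minimizer is collision free. Since $\R^N\setminus\Coll=\bigcup_\sigma\Dc_\sigma$, every global minimizer lies in some chamber $\Dc_\sigma$, and by Step 2 it must equal $\bar X^\sigma$. Conversely, $E_*\le E(\bar X^\sigma)$ for every $\sigma$. Together these give $E_*=\min_\sigma E(\bar X^\sigma)$ and the description of $\Mc_*$.

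\emph{Uniqueness statements.} The map $\sigma\mapsto\bar X^\sigma$ is injective, since distinct orderings produce points in disjoint chambers. Hence, if exactly one $\sigma$ attains the minimum, the labeled minimizer is unique.

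For equal weights, relabeling by $\tau$ permutes the chambers. It maps $\bar X^\sigma$ to $\bar X^{\sigma\circ\tau}$, and since all $s_k^\sigma=k/N$ coincide, $E$ takes the same value at both points. So all $N!$ points $\bar X^\sigma$ are minimizers, and the minimizer is unique only up to permutation. These points all induce the same measure $\frac1N\sum_k\delta_{\bar x_k}$, with $\bar x_k$ the barycenter of the $k$-th $1/N$-quantile interval. This gives uniqueness of the optimal unlabeled atomic measure.
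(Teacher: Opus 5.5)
Your proof is correct and follows essentially the same route as the paper. The monotone (quantile) coupling gives the chamber formula. The variance expansion gives a separable strictly convex quadratic with Hessian $P$, whose minimizer $\bar X^\sigma$ lies in $\Dc_\sigma$ because each barycenter is interior to its quantile interval. Proposition~\ref{prop:reduction} (existence and collision-freeness of minimizers) then places every global minimizer in some chamber.

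Your treatment of the uniqueness clauses is a useful addition, since the paper's proof leaves those points implicit. This covers the injectivity of $\sigma\mapsto\bar X^\sigma$ and the $\sigma$-independence of the quantile intervals when the weights are equal.
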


\begin{proof}
For the quadratic cost in one dimension, every optimal transport plan is monotone.  Once the sites
are ordered according to $\sigma$, the atom at $x_{\sigma(k)}$ must therefore receive the quantile
interval of mass $p_{\sigma(k)}$, namely $I_k^\sigma$.  This proves
\eqref{eq:E-one-dimensional}.  Expanding around the barycenter gives
\begin{align}\label{eq:variance-one-d}
 \int_{I_k^\sigma}|y-x_{\sigma(k)}|^2\nu(\dd y)
 &={}
 \int_{I_k^\sigma}|y-\bar x_{\sigma(k)}^\sigma|^2\nu(\dd y)
 +p_{\sigma(k)}|x_{\sigma(k)}-\bar x_{\sigma(k)}^\sigma|^2.
\end{align}
Because $\rho>0$ and $\nu(I_k^\sigma)=p_{\sigma(k)}>0$, the barycenter of each interval lies
strictly between its endpoints:
\[
 q_{k-1}^\sigma<\bar x_{\sigma(k)}^\sigma<q_k^\sigma.
\]
Hence $\bar X^\sigma\in\Dc_\sigma$.  The Hessian in the chamber is
$\operatorname{diag}(p_1,\ldots,p_N)$ and the unique chamber minimizer is
$\bar X^\sigma$.  Since there are finitely many orderings and every collision can be strictly
improved by Proposition~\ref{prop:reduction}, taking the minimum over $\sigma$ proves the first
identity in \eqref{eq:global-one-d}.  Every global minimizer is collision free and therefore lies
in one ordering chamber, where it must equal the corresponding $\bar X^\sigma$; the second identity
follows.
\end{proof}

\begin{remark}[Consistency with the facet formula]\label{rem:one-dimensional-Hessian}
Inside a fixed ordering chamber, the prescribed masses fix the boundaries of the balanced cells at
the quantiles $q_k^\sigma$, independently of the site locations.  Their first-order normal velocities
therefore vanish.  Equivalently, $Q_{ij}^X[\Xi]=0$ on every nonempty facet, and
\eqref{eq:hessian-facet-main} reduces to
\[
 D^2E(X)=P,
\]
in agreement with the quadratic decomposition \eqref{eq:variance-one-d}.
\end{remark}

\subsection{A two-dimensional saddle-point example}

The next explicit example illustrates why convergence of a deterministic Lloyd iteration to a
critical point is not a global-optimization result.

\begin{proposition}[Critical points on the unit square]
\label{prop:square-critical-points}
Let $d=N=2$, let $p_1=p_2=1/2$, and let $\nu$ be the uniform probability measure on
$\Omega=[0,1]^2$.  Up to exchange of the two labels, the critical points of $E$ on $\hX$ are
exactly
\begin{align}
 X^{\mathrm{bt}}
 &:=\left(\left(\frac12,\frac14\right),
          \left(\frac12,\frac34\right)\right),
 &
 X^{\mathrm{lr}}
 &:=\left(\left(\frac14,\frac12\right),
          \left(\frac34,\frac12\right)\right),
 \label{eq:square-axis-critical-points}\\
 X^{\nearrow}
 &:=\left(\left(\frac13,\frac13\right),
          \left(\frac23,\frac23\right)\right),
 &
 X^{\searrow}
 &:=\left(\left(\frac23,\frac13\right),
          \left(\frac13,\frac23\right)\right).
 \label{eq:square-diagonal-critical-points}
\end{align}
Moreover,
\begin{equation}\label{eq:square-critical-energies}
 E(X^{\mathrm{bt}})=E(X^{\mathrm{lr}})=\frac{5}{96},
 \qquad
 E(X^{\nearrow})=E(X^{\searrow})=\frac1{18}.
\end{equation}
Consequently, $X^{\mathrm{bt}}$ and $X^{\mathrm{lr}}$, together with their label exchanges, are
exactly the global minimizers.  The two diagonal configurations and their label exchanges are
saddle points.
\end{proposition}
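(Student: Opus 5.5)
The plan is to use the first-order characterization of Theorem~\ref{thm:C2}: $X\in\hX$ is critical if and only if $x_i=c_i(X)$ for $i=1,2$. For $N=2$ with $p_1=p_2=\tfrac12$, the balanced Laguerre partition consists of two half-squares cut by a line $\ell$ orthogonal to $e:=(x_2-x_1)/|x_2-x_1|$, with each side carrying Lebesgue mass $\tfrac12$. Since $[0,1]^2$ is centrally symmetric about $o:=(\tfrac12,\tfrac12)$, every area-bisecting line passes through $o$. (If a bisecting line missed $o$, its reflection through $o$ would be a parallel bisecting line, and the strip between them would have positive area, which is impossible.) So the cut is determined by the angle of $e$ alone. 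Write $e=e(t)=(\cos t,\sin t)$, let $H_\pm(t)$ be the two halves, and let $c_\pm(t)$ be their centroids. By central symmetry, $c_\pm(t)=o\pm g(t)$ for some vector $g(t)$.

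A configuration is critical exactly when its sites are these centroids and the consistency condition $c_+(t)-c_-(t)=2g(t)\parallel e(t)$ holds, with the correct orientation. So I will compute $g(t)$ explicitly and solve $g(t)\times e(t)=0$. By the dihedral symmetry of the square it suffices to take $t\in[0,\pi/4]$, where $\ell$ crosses the two vertical sides and each half is a trapezoid. Integrating $y$ over the trapezoid bounded by the line through $o$ with slope $-\cot t$ gives a rational expression in $\tan t$. I expect the transversality equation to reduce to $\tan t\,(1-\tan^2 t)=0$ or an equivalent factorization, giving only $t=0$ and $t=\pi/4$. Symmetry then yields exactly the four configurations, up to exchange of labels: $t=0,\pi/2$ give the rectangle cuts with centroids $(\tfrac14,\tfrac12),(\tfrac34,\tfrac12)$ and their rotations, and $t=\pm\pi/4$ give the triangle cuts with centroids $(\tfrac13,\tfrac13),(\tfrac23,\tfrac23)$ and the anti-diagonal analogue. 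This trapezoid centroid computation and its factorization is the main computational obstacle, and the step where uniqueness of the solutions has to be checked honestly.

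The energies come from $E(X)=\tfrac12\sum_i\int_{V_i}|y-c_i|^2\,\dd y$ at a critical point.
\begin{itemize}[leftmargin=2em]
\item For the half-rectangle $[0,1]\times[0,\tfrac12]$, the second moments about its centroid are $\tfrac12\bigl(\tfrac1{12}+\tfrac1{48}\bigr)$ per half, so $E=\tfrac12\cdot\tfrac{5}{48}=\tfrac5{96}$.
\item For the right triangle with legs $1$, the polar moment about its centroid is $\tfrac1{36}\cdot 2\cdot\tfrac12\cdot\ldots$, which gives $\tfrac1{36}$ per half and hence $E=\tfrac1{18}$.
\end{itemize}
By Proposition~\ref{prop:reduction}, a global minimizer exists and lies in $\hX$, so it is a critical point. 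Comparing $\tfrac5{96}<\tfrac1{18}$ identifies $X^{\mathrm{bt}},X^{\mathrm{lr}}$ and their label exchanges as exactly the global minimizers.

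For the saddle property at $X^{\nearrow}$ (and likewise $X^{\searrow}$), I will exhibit one direction in which $E$ strictly increases and one in which it strictly decreases.

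\emph{Increase.} Take $\Xi=(\xi,\xi)$ with $\xi\perp x_2-x_1$. In \eqref{eq:Q-main}, $(y-x_1)\cdot\xi-(y-x_2)\cdot\xi=(x_2-x_1)\cdot\xi=0$, so $b_X[\Xi]=0$, hence $\dot\Phi_X[\Xi]=0$ and $Q^X_{12}[\Xi]\equiv0$. The facet formula \eqref{eq:hessian-facet-main} then gives $D^2E(X)[\Xi,\Xi]=|\xi|^2>0$.

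\emph{Decrease.} Consider the curve $X(t):=(c_-(t),c_+(t))$ near $t=\pi/4$. Using the half-plane splitting $H_\pm(t)$ as a competitor in \eqref{eq:E-splitting} yields $E(X(t))\le G(t):=\tfrac12\sum_\pm\int_{H_\pm(t)}|y-c_\pm(t)|^2\,\dd y$, with equality at $t=\pi/4$. From the explicit trapezoid formulas I expect $G$ to be strictly maximal at $t=\pi/4$; at minimum, $G(\pi/4)=\tfrac1{18}>G(0)=\tfrac5{96}$, and I will check that $G'(\pi/4)=0$ and $G''(\pi/4)<0$. Then $E(X(t))<E(X^{\nearrow})$ for $t\ne\pi/4$ nearby, so $X^{\nearrow}$ is neither a local minimum nor a local maximum, i.e.\ a saddle.
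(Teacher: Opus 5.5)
Your proposal is correct. The classification, the energies, and the identification of the global minimizers follow the paper's proof essentially step by step: central symmetry forces the bisecting line through the center, you compute the half-square centroid explicitly, and you impose parallelism. In the paper's parametrization by the normal $(1,s)$, with $s=\tan t$, the centroid offset is $g=\bigl(\frac{3-s^2}{12},\frac{s}{6}\bigr)$, and the parallelism condition is $s(s^2-1)=0$, exactly the factorization you anticipate.

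\emph{Where your route differs: the descending half of the saddle argument.} The paper works at second order. It evaluates the facet formula \eqref{eq:hessian-facet-main} at $X^{\nearrow}$ in the direction $\Xi=((1,-1),(-1,1))$. There the linearized mass term has zero facet average, so $\dot\Phi_X[\Xi]=0$ and $D^2E(X)[\Xi,\Xi]=-2$.

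Your argument is zeroth order. It uses only the splitting representation \eqref{eq:E-splitting} and the centroid you already need for the classification, and it closes cleanly as follows.
\begin{itemize}
\item By central symmetry and the parallel-axis identity, $G(t)=\frac{1}{12}-\frac12|g(t)|^2$, where $|g|^2=(s^4-2s^2+9)/144$.
\item This is strictly decreasing on $s\in[0,1]$, so $G$ is strictly increasing on $[0,\pi/4]$. By the diagonal reflection, $G$ is symmetric about $\pi/4$.
\item Hence $E(X(t))\le G(t)<G(\pi/4)=E(X^{\nearrow})$ for every $t\neq\pi/4$.
\end{itemize}
This monotonicity argument avoids computing $G''(\pi/4)$ across the point where the trapezoid formula changes branch.

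\emph{What each approach buys.} Yours avoids the linearized-weight machinery. What you give up is quantitative information: you only learn that the curvature along your curve is at most $G''(\pi/4)=-\frac19$. The paper obtains an exact negative Hessian value and illustrates its facet formula.

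The two routes probe the same direction. One checks $\dot X(\pi/4)=\frac13\Xi$, so $(E\circ X)''(\pi/4)=-\frac29\le-\frac19$, consistent with your competitor bound.

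\emph{Ascending direction.} Your common translation orthogonal to $x_2-x_1$ avoids the weight correction that the paper's $\Theta=((1,0),(1,0))$ requires. In fact every common translation $(\theta,\theta)$ leaves the balanced cells unchanged. Hence at a critical point $E(X+s(\theta,\theta))=E(X)+\frac{s^2}{2}|\theta|^2$ exactly.

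\emph{Two cosmetic slips.}
\begin{itemize}
\item For $t\in[0,\pi/4]$ the cut has slope $-\cot t$ with $|\cot t|\ge1$. It therefore crosses the top and bottom sides of the square, not the vertical ones.
\item Each triangle has second moment $\frac{1}{18}$ about its centroid ($\frac{1}{36}$ per coordinate). Your formula with ``$\frac{1}{36}$ per half'' would give $E=\frac{1}{36}$; your stated final value $\frac{1}{18}$ is the correct one.
\end{itemize}
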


\begin{proof}
Let $c=(1/2,1/2)$ be the center of the square.  At a critical point,
Proposition~\ref{prop:C1} gives $x_i=c_i(X)$ for $i=1,2$.  The two balanced Laguerre cells are
separated by an affine line and each has area $1/2$.  For a fixed normal direction, the line cutting
the centrally symmetric square into two sets of equal area is unique and passes through $c$.

Up to exchange of the labels and the dihedral symmetries of the square, we may therefore assume
that the normal is proportional to $(1,t)$ for some $t\in[0,1]$.  In coordinates centered at $c$,
consider the positive half-cell
\[
 H_t
 :=
 \left\{(u,v)\in\left[-\frac12,\frac12\right]^2:
 u+tv\ge0\right\}.
\]
It has area $1/2$, and its centroid relative to $c$ is
\begin{align}
 m(t)
 &:=2\int_{-1/2}^{1/2}\int_{-tv}^{1/2}(u,v)\,\dd u\,\dd v
 \notag\\
 &=\left(\frac{3-t^2}{12},\frac{t}{6}\right).
 \label{eq:half-square-centroid}
\end{align}
The two cell centroids are $c-m(t)$ and $c+m(t)$.  At a critical configuration, the normal of the
Laguerre interface, which is parallel to $x_2-x_1=2m(t)$, must also be parallel to $(1,t)$.  Hence
\[
 0
 =\det\!\left((1,t),m(t)\right)
 =\frac{t}{6}-\frac{t(3-t^2)}{12}
 =\frac{t(t^2-1)}{12}.
\]
Thus $t=0$ or $t=1$.  Applying the symmetries of the square gives precisely the four configurations
in \eqref{eq:square-axis-critical-points}--\eqref{eq:square-diagonal-critical-points}.

For an axis-aligned partition, symmetry reduces the energy to the second moment of one half-square:
\begin{align*}
 E(X^{\mathrm{lr}})
 &=\int_0^{1/2}\int_0^1
 \left[\left(u-\frac14\right)^2+
       \left(v-\frac12\right)^2\right]\dd v\,\dd u
 =\frac{5}{96}.
\end{align*}
The same value holds for $X^{\mathrm{bt}}$.  For a diagonal partition,
\begin{align*}
 E(X^{\searrow})
 &=\int_0^1\int_0^u
 \left[\left(u-\frac23\right)^2+
       \left(v-\frac13\right)^2\right]\dd v\,\dd u
 =\frac1{18},
\end{align*}
and the same value holds for $X^{\nearrow}$.  Since every global minimizer is collision free by
Proposition~\ref{prop:reduction} and hence is a critical point by Proposition~\ref{prop:C1}, the
energy comparison proves the assertion concerning the global minimizers.

It remains to classify the diagonal critical points.  Consider
$X=X^{\nearrow}$ and the directions
\[
 \Xi:=\bigl((1,-1),(-1,1)\bigr),
 \qquad
 \Theta:=\bigl((1,0),(1,0)\bigr).
\]
The common facet is parametrized by $y(s)=(s,1-s)$, $0\le s\le1$, and
$r_{12}(X)=\sqrt2/3$.  For the direction $\Xi$, the linearized mass term has zero facet average,
so $\dot\Phi_X[\Xi]=0$, and
\[
 Q_{12}^X[\Xi](y(s))=4\left(s-\frac12\right).
\]
Using \eqref{eq:hessian-facet-main} and
$r_{12}(X)^{-1}\dd\HH^1=3\,\dd s$ on the facet, we obtain
\[
 D^2E(X)[\Xi,\Xi]
 =2-3\int_0^1 16\left(s-\frac12\right)^2\dd s
 =-2<0.
\]
For the common-translation direction $\Theta$, the linearized weight correction makes
$Q_{12}^X[\Theta]$ vanish identically, and therefore
\[
 D^2E(X)[\Theta,\Theta]=1>0.
\]
Thus $X^{\nearrow}$ is a saddle point.  The same conclusion holds for $X^{\searrow}$ by symmetry.
\end{proof}

\section{Exploratory temperature control}\label{sec:control}
This section proves Theorem~\ref{thm:HJB}.  Throughout, Assumption~\ref{ass:geometry} holds and
\begin{equation}\label{eq:d-ge-2}
 d\ge2,
 \qquad
 m:=dN.
\end{equation}
The state space is the connected open set $\hX\subset\R^m$.  We write
\begin{equation}\label{eq:separation-control}
 \mathfrak d(X):=\min_{1\le i<j\le N}|x_i-x_j|,
 \qquad X\in\hX,
\end{equation}
so that the continuous extension of $\mathfrak d$ to $\Xc^N$ vanishes precisely on the collision
set $\Coll$.
The entropy-regularized relaxed-control formulation follows the exploratory-control framework of
\cite{WangZariphopoulouZhou2020,GaoXuZhou2022}; see also
\cite{TangZhangZhou2022} for exploratory HJB equations.  The points requiring additional work here
are the punctured state space $\hX$, the merely local Lipschitz regularity of the drift, collision
avoidance, and the regularity bootstrap needed to construct a locally Lipschitz feedback.
\subsection{Relaxed temperature controls and the canonical formulation}\label{subsec:relaxed-controls}
Fix a discount factor $\vartheta>0$, an entropy parameter $\lambda>0$, and the nondegenerate compact
temperature interval
\begin{equation}\label{eq:temperature-range}
 \Uc:=[a,1],
 \qquad 0<a<1.
\end{equation}
Let
\begin{equation}\label{eq:relaxed-action-space}
 \mathfrak U:=\mathcal P([a,1])
\end{equation}
be endowed with the topology of weak convergence.  This is a compact Polish space.  For
$\varpi\in\mathfrak U$, define
\begin{align}
 \bar u(\varpi)&:=\int_a^1u\,\varpi(\dd u)\in[a,1],
 \label{eq:mean-action}\\
 \Ent_{[a,1]}(\varpi)&:=
 \begin{cases}
  \displaystyle\int_a^1 f(u)\log f(u)\dd u,
       &\varpi(\dd u)=f(u)\dd u,\\[1mm]
  +\infty,&\varpi\not\ll\dd u.
 \end{cases}
 \label{eq:entropy-action}
\end{align}
Thus $\Ent_{[a,1]}$ is entropy relative to Lebesgue measure on $[a,1]$, with the value $+\infty$
assigned to singular laws.  It is lower semicontinuous, hence Borel measurable, on $\mathfrak U$.
Jensen's inequality gives
\begin{equation}\label{eq:entropy-lower}
 \Ent_{[a,1]}(\varpi)\ge-\log(1-a)>0.
\end{equation}
We work on the canonical Wiener space
\begin{equation}\label{eq:canonical-Wiener-space}
 \Omega_W:=\{\omega\in C([0,\infty);\R^m):\omega(0)=0\},
 \qquad
 W_t(\omega):=\omega(t),
 \qquad
 \Fc_t^0:=\sigma(W_s:0\le s\le t),
\end{equation}
endowed with the topology of locally uniform convergence and Wiener measure $\P$.  Write
$\mathbb F^0=(\Fc_t^0)_{t\ge0}$ for the raw canonical filtration and $\mathbb F=(\Fc_t)_{t\ge0}$
for its usual augmentation.  An admissible relaxed control is an actual, rather than merely
completed-equivalence-class, $\mathfrak U$-valued process
\begin{equation}\label{eq:admissible-control-class}
 \pi:[0,\infty)\times\Omega_W\longrightarrow\mathfrak U
\end{equation}
which is progressively measurable for $\mathbb F^0$.  The stochastic integrals are taken with
respect to the augmented filtration $\mathbb F$.  Proposition~\ref{prop:controlled-dynamics} below
shows that the corresponding state process can nevertheless be chosen continuous and raw adapted;
this is the version used in all stopping-time and concatenation arguments.
For an admissible control $\pi$, put
\begin{equation}\label{eq:mean-temperature}
 \bar u_t^\pi:=\bar u(\pi_t)\in[a,1],
 \qquad
 \sigma_t^\pi:=\sqrt{2\bar u_t^\pi}\in[\sqrt{2a},\sqrt2].
\end{equation}
Starting from $X\in\hX$, the controlled state equation is
\begin{equation}\label{eq:controlled-SDE}
 \dd X_t^\pi=-\nabla E(X_t^\pi)\dd t+\sigma_t^\pi\dd W_t,
 \qquad X_0^\pi=X.
\end{equation}
Here and below, a scalar diffusion coefficient multiplies the identity matrix of $\R^m$.
Define the extended running cost
\begin{equation}\label{eq:running-cost-notation}
 \ell_\lambda(X,\varpi):=E(X)+\lambda\Ent_{[a,1]}(\varpi)\in[0,+\infty]
\end{equation}
and the discounted performance criterion
\begin{equation}\label{eq:control-cost}
 J_\lambda(X;\pi)
 :=\E_X\left[\int_0^\infty e^{-\vartheta t}
 \ell_\lambda(X_t^\pi,\pi_t)\dd t\right]\in[0,+\infty].
\end{equation}
The value function is
\begin{equation}\label{eq:value-function}
 v_\lambda(X):=\inf_\pi J_\lambda(X;\pi),
\end{equation}
where the infimum is over all admissible raw-progressive relaxed controls.  The constant control
given by the uniform probability law on $[a,1]$ has finite entropy and, by
Proposition~\ref{prop:controlled-dynamics}, finite cost.  In particular, $v_\lambda$ is finite from
above.
For $q\in\R$, set
\begin{equation}\label{eq:partition-H}
 Z_\lambda(q):=\int_a^1e^{-uq/\lambda}\dd u,
 \qquad
 \pi_q^\lambda(u):=Z_\lambda(q)^{-1}e^{-uq/\lambda},
 \qquad
 \varpi_q^\lambda(\dd u):=\pi_q^\lambda(u)\dd u,
\end{equation}
and define the scalar Hamiltonian
\begin{equation}\label{eq:Hamiltonian-control}
 \mathscr H_\lambda(q):=-\lambda\log Z_\lambda(q).
\end{equation}
Differentiation under the integral sign gives
\begin{align}
 \mathscr H_\lambda'(q)
 &=\int_a^1u\,\varpi_q^\lambda(\dd u)\in(a,1),
 \label{eq:H-prime}\\
 \mathscr H_\lambda''(q)
 &=-\frac1\lambda\operatorname{Var}_{\varpi_q^\lambda}(u)<0.
 \label{eq:H-second}
\end{align}
The strict inequality follows because $0<a<1$ and $\varpi_q^\lambda$ has a strictly positive
density on the nondegenerate interval $[a,1]$.  Hence $\mathscr H_\lambda$ is smooth, strictly
increasing, and strictly concave.  Moreover, for $q_1\le q_2$,
\begin{equation}\label{eq:H-uniform-ellipticity}
 a(q_2-q_1)
 \le \mathscr H_\lambda(q_2)-\mathscr H_\lambda(q_1)
 \le q_2-q_1.
\end{equation}
In particular, $\mathscr H_\lambda(q)\to\pm\infty$ as $q\to\pm\infty$.
Consequently,
\begin{equation}\label{eq:H-diffeomorphism}
 \mathscr H_\lambda:\R\longrightarrow\R
\end{equation}
is a $C^\infty$ increasing diffeomorphism and
$\mathscr H_\lambda^{-1}$ is globally $a^{-1}$-Lipschitz.
For every $\varpi\in\mathfrak U$, the exact Gibbs identity is
\begin{equation}\label{eq:gibbs-relative-entropy}
 q\bar u(\varpi)+\lambda\Ent_{[a,1]}(\varpi)
 =\mathscr H_\lambda(q)
 +\lambda\Ent\!\left(\varpi\,\middle|\,\varpi_q^\lambda\right),
\end{equation}
with both sides equal to $+\infty$ when $\varpi$ is singular with respect to Lebesgue measure.
Therefore
\begin{equation}\label{eq:gibbs-variational}
 \inf_{\varpi\in\mathfrak U}
 \left\{q\bar u(\varpi)+\lambda\Ent_{[a,1]}(\varpi)\right\}
 =\mathscr H_\lambda(q),
\end{equation}
and the unique minimizer is $\varpi_q^\lambda$.
\paragraph{Interpretation of the parameters.}
The discount factor fixes the time scale: $e^{-\vartheta t}$ has half-life
$(\log2)/\vartheta$.  The parameter $\lambda$ controls the concentration of the Gibbs law, since
the ratio of its unnormalized endpoint densities is
\begin{equation}\label{eq:lambda-endpoint-calibration}
 \frac{e^{-aq/\lambda}}{e^{-q/\lambda}}
 =\exp\left(\frac{(1-a)q}{\lambda}\right).
\end{equation}
For $q=0$, $\varpi_0^\lambda$ is exactly the uniform probability law on $[a,1]$ for every
$\lambda>0$.  For fixed $q\ne0$,
\begin{equation}\label{eq:lambda-regimes-control}
 \varpi_q^\lambda\Longrightarrow
 \begin{cases}
  \delta_a,&q>0,\\
  \delta_1,&q<0,
 \end{cases}
 \qquad\text{as }\lambda\downarrow0,
\end{equation}
whereas $\varpi_q^\lambda$ converges to the uniform law as $\lambda\to\infty$.
These observations are only tuning heuristics; all mathematical results below hold for every
$\vartheta>0$ and $\lambda>0$.  The lower bound $a$ has a different role: it is the exploration
floor and the uniform ellipticity constant used throughout the analysis.
\subsection{Well-posedness, localization, and continuity}\label{subsec:controlled-wellposedness}
We first construct the controlled equation on the collision-free state space and establish estimates
uniform over all admissible controls.
\begin{proposition}\label{prop:controlled-dynamics}
For every relaxed control $\pi$ and every initial condition $X\in\hX$, there exists a unique
continuous process $X^\pi=(X_t^\pi)_{t\ge0}$, adapted to the raw canonical filtration
$(\Fc_t^0)_{t\ge0}$, such that, almost surely,
\begin{equation}\label{eq:controlled-integral-form}
 X_t^\pi
 =X-\int_0^t\nabla E(X_s^\pi)\dd s
 +\int_0^t\sigma_s^\pi\dd W_s,
 \qquad t\ge0.
\end{equation}
In other words, \eqref{eq:controlled-SDE} has a pathwise unique global strong solution.  This
solution never reaches the collision set:
\begin{equation}\label{eq:no-collision-control}
 \P_X\left[X_t^\pi\in\hX\text{ for every }t\ge0\right]=1.
\end{equation}
It also satisfies
\begin{equation}\label{eq:moment-controlled}
 \E_X|X_t^\pi|^2
 \le e^{-\pmin t}|X|^2+
 \frac{m_2(\nu)+2m}{\pmin},
 \qquad t\ge0,
\end{equation}
uniformly over all relaxed controls.
\end{proposition}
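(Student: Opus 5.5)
The plan is to construct the solution by localization on an exhaustion of $\hX$, rule out both explosion and collision with two Lyapunov-type functions, and derive the moment bound from the dissipativity estimate of Proposition~\ref{prop:supergradient-growth}.

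\emph{Local solution.} Put
\[
 K_n:=\{X\in\hX:|X|\le n,\ \mathfrak d(X)\ge 1/n\},
\]
so that $K_n$ is compact in $\hX$ and $\bigcup_n K_n=\hX$. By Theorem~\ref{thm:C2}, $\nabla E$ is $C^1$ on $\hX$ and hence Lipschitz on a neighborhood of each $K_n$. Take a globally Lipschitz $b_n$ equal to $-\nabla E$ on $K_n$. The diffusion coefficient $\sigma^\pi_t$ is bounded and progressive and does not depend on the state. The truncated SDE therefore has a pathwise unique strong solution, obtained by Picard iteration. Because Picard iterates are functionals of $W$ and $\pi$ alone, a raw-adapted continuous version can be chosen; the stochastic integral can be fixed in a raw version via a standard modification. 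Uniqueness up to the exit time $\zeta_n$ of $K_n$ gives consistency across $n$, and this defines $X^\pi$ on $[0,\zeta)$ with $\zeta=\lim_n\zeta_n$.

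\emph{Nonexplosion and moments.} Apply Itô to $|X|^2$ up to $\zeta_n$. On $\hX$, $\nabla E(X)=g$ is the supergradient of Proposition~\ref{prop:supergradient-growth}, so \eqref{eq:dissipativity-supergradient} gives $X\cdot\nabla E(X)\ge\frac{\pmin}2|X|^2-\frac12m_2(\nu)$. Hence
\[
 \dd|X_t|^2\le\bigl(-\pmin|X_t|^2+m_2(\nu)+2m\bigr)\dd t+\dd M_t,
\]
using $(\sigma^\pi_t)^2m\le 2m$. Multiplying by $e^{\pmin t}$, taking expectations at $t\wedge\zeta_n$ and applying Fatou yields \eqref{eq:moment-controlled} once $\zeta=\infty$ is known. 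It also rules out escape to infinity before $\zeta$.

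\emph{Collision avoidance (main obstacle).} I would use a pairwise logarithmic Lyapunov function. For $i<j$ set $r=x_i-x_j$ and $\psi_{ij}=-\log|r|$. The process $r$ has drift $-(\nabla_{x_i}E-\nabla_{x_j}E)$ and noise $\sigma_t(\dd W^i-\dd W^j)$, with quadratic variation $2\sigma_t^2 I_d\,\dd t$. Itô gives
\[
 \dd\psi_{ij}=\frac{r\cdot(\nabla_{x_i}E-\nabla_{x_j}E)}{|r|^2}\dd t-\sigma_t^2\frac{d-2}{|r|^2}\dd t+\dd M.
\]
For $d\ge2$ the Itô correction is $\le0$. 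The difficulty is the drift term: $\nabla E$ is discontinuous across $\Coll$, and $|r|^{-1}|\nabla_{x_i}E-\nabla_{x_j}E|$ need not stay bounded. I would control it with semiconcavity (Lemma~\ref{lem:DC}). The monotonicity inequality for the convex function $|X|_P^2-2E$, evaluated along a suitable segment, gives a one-sided bound for $(X-Z)\cdot(\nabla E(X)-\nabla E(Z))$ by $|X-Z|_P^2$. Choosing $Z$ as the configuration with $x_i,x_j$ swapped, I would aim for a bound
\[
 r\cdot(\nabla_{x_i}E-\nabla_{x_j}E)\le C|r|^2+C'|r|,
\]
locally on bounded sets. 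For $d\ge3$ the strict negativity $-(d-2)\sigma^2|r|^{-2}$ absorbs the $C'/|r|$ term. For $d=2$ the $C'|r|$ term is delicate, and I would try $\psi=\log(-\log|r|)$ or a Bessel-comparison argument to obtain polarity: the diffusion part is a time-changed $d$-dimensional Brownian motion with bounded relative drift, and $\{r=0\}$ is polar for such processes when $d\ge2$.

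Given a Lyapunov function $\Psi=\sum\psi_{ij}+|X|^2$ with $\mathcal L\Psi\le C_R$ on $\{|X|\le R\}$, the standard Khasminskii argument gives $\P[\zeta_n\le t]\to0$. Hence $\zeta=\infty$ a.s., which is \eqref{eq:no-collision-control}.
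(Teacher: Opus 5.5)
Your proposal is correct. It differs from the paper only in how collisions are excluded.

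\textbf{The two routes.} The paper uses no Lyapunov function near $\Coll$. On $\{|X|\le R\}$ the drift is bounded, so Girsanov removes it. Each pairwise difference then becomes a time-changed $d$-dimensional Brownian motion by Dambis--Dubins--Schwarz, and polarity of points for $d\ge2$ concludes. You instead run a Khasminskii test with radial functions of $r=x_i-x_j$.

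\textbf{The semiconcavity step is unnecessary.} By \eqref{eq:growth-supergradient}, $M_R:=\sup\{|\nabla E(Y)|:Y\in\hX,\ |Y|\le R\}<\infty$. Hence $r\cdot(\nabla_{x_i}E-\nabla_{x_j}E)\le 2M_R|r|$ outright, which is your target bound with $C=0$. The swap trick would in any case only tell you something about the pair at $X$ when $p_i=p_j$. In that case label symmetry identifies $\nabla E(Z)$ with the permuted gradient, and monotonicity gives $r\cdot(\nabla_{x_i}E-\nabla_{x_j}E)\le p_i|r|^2$, which would even make $-\log|r|$ work for $d=2$. For $p_i\ne p_j$ it gives nothing.

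\textbf{Your tentative $d=2$ choice works.} Take $\psi=\log\log(c/|r|)$ with $c=2eR$, so that $L:=\log(c/|r|)\ge1$ and $\psi\ge0$ whenever $|X|<R$.
\begin{itemize}
\item In $\R^2$ one computes $|\nabla\psi|=1/(|r|L)$ and $\Delta\psi=-1/(|r|^2L^2)$.
\item The process $r$ has quadratic variation $2(\sigma_t^\pi)^2I_d\,\dd t$.
\item Hence, with $u=1/(|r|L)$, the drift of $\psi(r_t)$ is at most $2M_Ru-(\sigma_t^\pi)^2u^2\le M_R^2/(2a)$.
\end{itemize}
For $d\ge3$, the same completion of the square with $\psi=\log(2R/|r|)$ gives the bound $M_R^2/(2a(d-2))$.

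To finish, stop at the exit from $\{|X|<R,\ \mathfrak d(X)>\delta\}$ and sum over pairs. This bounds the probability that some pair comes within $\delta$ before time $T$ while $|X|<R$ by $\sum_{i<j}(\psi(x_i-x_j)+C_{R,a}T)/\psi(\delta)$. Let $\delta\downarrow0$, then $R\to\infty$ using nonexplosion.

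\textbf{What each route buys.}
\begin{itemize}
\item The paper's argument needs no case distinction in $d$ and reduces everything to a classical fact. The same Girsanov comparison, with a H\"older bound on the density and a compactness argument for a uniform Brownian small-ball estimate, is reused for the control-uniform near-collision bound of Lemma~\ref{lem:uniform-localization-control}.
\item Your route is more elementary and needs neither a change of measure nor a time change. It yields that uniform bound directly, with an explicit rate: $1/\log(1/\delta)$ for $d\ge3$ and $1/\log\log(1/\delta)$ for $d=2$. The price is treating the critical dimension $d=2$ separately.
\end{itemize}
Both arguments rely on the temperature floor $a>0$ and on the noise being a scalar multiple of the identity.
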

\begin{proof}
Put $b:=-\nabla E$.  We divide the proof into four steps.
\medskip
\noindent\textit{Step 1: construction of the maximal strong solution.}
We first record why the solution may be chosen raw adapted.  Since
$\sigma^\pi$ is bounded and raw progressively measurable, it can be approximated in
$L^2([0,T]\times\Omega_W,\dd t\otimes\P)$ by raw predictable step processes on every finite horizon $T$.
The corresponding stochastic integrals are continuous and raw adapted, and the
Burkholder--Davis--Gundy inequality gives convergence, along a subsequence, uniformly on
compact time intervals almost surely.  We henceforth use the resulting continuous raw-adapted
version of
\[
 M_t^\pi:=\int_0^t\sigma_s^\pi\dd W_s.
\]
For a globally Lipschitz drift, Picard iteration in the integral equation driven by $M^\pi$
then produces a continuous raw-adapted strong solution.
For $n\ge1$, let
\begin{equation}\label{eq:exhaustion-controlled-dynamics}
 D_n:=\left\{Y\in\hX:|Y|<n,\ \mathfrak d(Y)>n^{-1}\right\}.
\end{equation}
The sets $D_n$ are increasing, $\overline D_n\Subset\hX$, and
$\bigcup_nD_n=\hX$.  By Theorem~\ref{thm:C2}, $b$ is locally Lipschitz on $\hX$; hence its
restriction to $\overline D_n$ is Lipschitz.  Extend it componentwise to a globally Lipschitz map
$b_n:\R^m\to\R^m$ which agrees with $b$ on $\overline D_n$.  Since $\sigma^\pi$ is progressively
measurable and bounded, the equation
\begin{equation}\label{eq:extended-SDE-controlled}
 \dd Y_t^n=b_n(Y_t^n)\dd t+\sigma_t^\pi\dd W_t,
 \qquad Y_0^n=X,
\end{equation}
has a pathwise unique global strong solution.  Choose $n_0$ with $X\in D_{n_0}$ and, for
$n\ge n_0$, define
\[
 \tau_n:=\inf\{t\ge0:Y_t^n\notin D_n\},
 \qquad \inf\varnothing:=\infty.
\]
If $k>n$, the coefficients in the equations for $Y^n$ and $Y^k$ agree as long as both paths remain
in $D_n$.  Pathwise uniqueness therefore gives
\[
 Y_t^k=Y_t^n\quad\text{for }0\le t\le\tau_n,
 \qquad
 \tau_n\le\tau_k.
\]
We may consequently define
\begin{equation}\label{eq:maximal-controlled-solution}
 \tau_\infty:=\lim_{n\to\infty}\tau_n,
 \qquad
 X_t^\pi:=Y_t^n\quad\text{whenever }t<\tau_n.
\end{equation}
This is the pathwise unique maximal strong solution in $\hX$.
The possible endpoints of the maximal solution can now be defined without circularity.  For
$R>0$ and $\delta>0$, put
\begin{align}
 \zeta_R&:=\inf\{t<\tau_\infty:|X_t^\pi|\ge R\},
 &\zeta&:=\lim_{R\to\infty}\zeta_R,
 \label{eq:explosion-times-control}\\
 \chi_\delta&:=\inf\{t<\tau_\infty:\mathfrak d(X_t^\pi)\le\delta\},
 &\chi&:=\lim_{\delta\downarrow0}\chi_\delta.
 \label{eq:collision-times-control}
\end{align}
The time $\zeta$ is the explosion time and $\chi$ is the collision-approach time.  The exhaustion
construction implies
\begin{equation}\label{eq:lifetime-decomposition-control}
 \tau_\infty=\zeta\wedge\chi.
\end{equation}
Indeed, if $\tau_\infty<\infty$ while the path remained in a compact subset of $\hX$, that compact
set would be contained in some $D_n$ and the solution could be continued beyond $\tau_\infty$,
contradicting maximality.
\medskip
\noindent\textit{Step 2: exclusion of finite-time explosion.}
Fix $T,R>0$.  For $n$ sufficiently large that $X\in D_n$, apply It\^o's formula up to
$T\wedge\zeta_R\wedge\chi_{1/n}$.  Proposition~\ref{prop:supergradient-growth} and
Proposition~\ref{prop:C1} give
\begin{equation}\label{eq:drift-Lyapunov-control}
 2X\cdot b(X)+m(\sigma_t^\pi)^2
 =-2X\cdot\nabla E(X)+2m\bar u_t^\pi
 \le-\pmin|X|^2+m_2(\nu)+2m.
\end{equation}
Dropping the negative term, taking expectations, and using the stopped martingale property yield
\begin{equation}\label{eq:stopped-second-moment-control}
 \E_X\left[|X_{T\wedge\zeta_R\wedge\chi_{1/n}}^\pi|^2\right]
 \le |X|^2+\bigl(m_2(\nu)+2m\bigr)T.
\end{equation}
On the event
$\{\zeta_R\le T,\ \zeta_R<\chi_{1/n}\}$, the stopped norm equals $R$.  Therefore
\[
 \P_X(\zeta_R\le T,\ \zeta_R<\chi_{1/n})
 \le\frac{|X|^2+(m_2(\nu)+2m)T}{R^2}.
\]
Letting $n\to\infty$ gives the same bound with $\chi$ in place of $\chi_{1/n}$.
If $\zeta\le T$, then $\zeta<\infty$.  By the definition of $\zeta$ as the increasing
limit of the level-hitting times, every finite level $R>|X|$ is reached at some time
$\zeta_R<\tau_\infty$.  Since
$\tau_\infty=\zeta\wedge\chi$, this implies
$\zeta_R\le\zeta\le T$ and $\zeta_R<\chi$.  Consequently,
\[
 \{\zeta\le T\}\subset
 \{\zeta_R\le T,\ \zeta_R<\chi\}
 \qquad\text{for every }R>|X|,
\]
and hence
\[
 \P_X(\zeta\le T)
 \le\frac{|X|^2+(m_2(\nu)+2m)T}{R^2}
 \qquad\text{for every sufficiently large }R.
\]
Letting $R\to\infty$ proves $\P_X(\zeta\le T)=0$.  Since $T$ is arbitrary,
\begin{equation}\label{eq:no-explosion-control-detailed}
 \P_X(\zeta=\infty)=1.
\end{equation}
\medskip
\noindent\textit{Step 3: exclusion of collisions.}
We recall the probabilistic terminology used here.  A singleton is said to be \emph{polar} for a
process if the process, when started outside that singleton, hits it with probability zero.  For a
$d$-dimensional Brownian motion $B$ and $d\ge2$,
\begin{equation}\label{eq:Brownian-point-polar-control}
 \P_z(\exists t\ge0:B_t=0)=0,
 \qquad z\ne0;
\end{equation}
that is, every point is polar for Brownian motion.
Fix $R,T>0$.  On the event $\{\chi\le T\wedge\zeta_R\}$, the drift is bounded up to
$\chi$ and the diffusion coefficient is bounded; hence both integrals in
\eqref{eq:controlled-integral-form} have continuous limits as $t\uparrow\chi$.  We use this
limit to define the stopped path at $\chi$.  Moreover, if $\delta_n\downarrow0$, then
$\chi_{\delta_n}\uparrow\chi$ and
$\mathfrak d(X_{\chi_{\delta_n}}^\pi)\le\delta_n$; continuity therefore gives
$\mathfrak d(X_\chi^\pi)=0$.  Thus a collision is actually attained at time $\chi$ on this
event.  Set
\[
 \sigma:=T\wedge\zeta_R\wedge\chi.
\]
By \eqref{eq:growth-supergradient}, $b$ is bounded on
$\{Y\in\hX:|Y|\le R\}$; let $M_R$ be such a bound.  Define the progressively measurable Girsanov
integrand
\begin{equation}\label{eq:Girsanov-integrand-control}
 \gamma_t:=(\sigma_t^\pi)^{-1}b(X_t^\pi)\id_{\{t<\sigma\}}.
\end{equation}
This process is sometimes called the Girsanov kernel.  Here ``bounded'' means that its Euclidean
norm is bounded by a deterministic constant, uniformly in time and in the sample point.  Indeed,
$(\sigma_t^\pi)^{-1}$ denotes multiplication by the scalar reciprocal and
$\sigma_t^\pi\ge\sqrt{2a}$, so
\[
 |\gamma_t|\le M_R/\sqrt{2a},
 \qquad0\le t\le T,
 \quad\P\text{-almost surely}.
\]
Thus Novikov's condition holds, and
\begin{equation}\label{eq:Girsanov-density-control}
 \mathcal E_T
 :=\exp\left(-\int_0^T\gamma_s\cdot\dd W_s
 -\frac12\int_0^T|\gamma_s|^2\dd s\right)
\end{equation}
is a strictly positive martingale of mean one.  Define an equivalent probability measure on
$\Fc_T$ by $\dd\Q_{R,T}=\mathcal E_T\dd\P$.  Girsanov's theorem states that
\begin{equation}\label{eq:Girsanov-Brownian-control}
 W_t^{R,T}:=W_t+\int_0^t\gamma_s\dd s,
 \qquad 0\le t\le T,
\end{equation}
is an $m$-dimensional Brownian motion under $\Q_{R,T}$.  Substitution into the stopped equation
shows that
\begin{equation}\label{eq:zero-drift-stopped-control}
 X_{t\wedge\sigma}^\pi
 =X+\int_0^{t\wedge\sigma}\sigma_s^\pi\dd W_s^{R,T},
 \qquad 0\le t\le T.
\end{equation}
Fix $i<j$.  The difference of the two $d$-dimensional site coordinates satisfies, under
$\Q_{R,T}$,
\begin{equation}\label{eq:difference-local-martingale}
 X_{t\wedge\sigma}^{\pi,i}-X_{t\wedge\sigma}^{\pi,j}
 =x_i-x_j+\int_0^{t\wedge\sigma}2\sqrt{\bar u_s^\pi}\dd B_s^{ij},
\end{equation}
where $B^{ij}$ is a $d$-dimensional Brownian motion.  Its coordinate covariations are
\begin{equation}\label{eq:qv-difference}
 \left\langle X^{\pi,i,k}-X^{\pi,j,k},
 X^{\pi,i,\ell}-X^{\pi,j,\ell}\right\rangle_{t\wedge\sigma}
 =4\delta_{k\ell}\int_0^{t\wedge\sigma}\bar u_s^\pi\dd s.
\end{equation}
The multidimensional Dambis--Dubins--Schwarz theorem therefore gives a $d$-dimensional Brownian
motion $\widetilde B^{ij}$ such that
\begin{equation}\label{eq:DDS-difference-control}
 X_{t\wedge\sigma}^{\pi,i}-X_{t\wedge\sigma}^{\pi,j}
 =x_i-x_j+\widetilde B^{ij}_{A_t},
 \qquad
 A_t:=4\int_0^{t\wedge\sigma}\bar u_s^\pi\dd s\le4T.
\end{equation}
On $\{\chi\le T\wedge\zeta_R\}$, the preceding continuity argument shows that,
for at least one pair $i<j$, the continuous path in
\eqref{eq:DDS-difference-control} hits the point $0$.  This has zero probability by
\eqref{eq:Brownian-point-polar-control}; see also
\cite[Chapters~V and XI]{RevuzYor1999}.  The union over the finitely many pairs is still null.
Equivalence of $\Q_{R,T}$ and $\P$ consequently gives
\[
 \P_X(\chi\le T\wedge\zeta_R)=0.
\]
By \eqref{eq:no-explosion-control-detailed}, on the event $\{\chi\le T\}$ the path up to time
$\chi$ is contained in some ball.  Hence
\[
 \{\chi\le T\}\subset
 \bigcup_{R\in\mathbb N}\{\chi\le T\wedge\zeta_R\}
 \qquad\P_X\text{-almost surely}.
\]
Each event on the right is null, so $\P_X(\chi\le T)=0$, and then
$\P_X(\chi=\infty)=1$.  Together with \eqref{eq:lifetime-decomposition-control}, this proves
global existence and \eqref{eq:no-collision-control}.
\medskip
\noindent\textit{Step 4: the uniform second-moment estimate.}
Now that the solution is global, apply It\^o's formula to
$e^{\pmin t}|X_t^\pi|^2$ up to $t\wedge\zeta_R$.  Using
\eqref{eq:drift-Lyapunov-control} gives
\[
 \E_X\left[e^{\pmin(t\wedge\zeta_R)}
 |X_{t\wedge\zeta_R}^\pi|^2\right]
 \le |X|^2+\frac{m_2(\nu)+2m}{\pmin}(e^{\pmin t}-1).
\]
Let $R\to\infty$.  Since $\zeta_R\to\infty$ almost surely, Fatou's lemma yields
\[
 e^{\pmin t}\E_X|X_t^\pi|^2
 \le |X|^2+\frac{m_2(\nu)+2m}{\pmin}(e^{\pmin t}-1),
\]
which implies \eqref{eq:moment-controlled}.  Every constant used above is independent of the
control, completing the proof.
\end{proof}
The next lemma is not an additional qualitative well-posedness result.  Its role is to provide
\emph{uniform} compact localization over all relaxed controls.  This uniformity is essential for
the continuity of the cost and value functions below, and later for the locally uniform Euler
approximation in Proposition~\ref{prop:euler-finite-horizon}.
\begin{lemma}\label{lem:uniform-localization-control}
Let $K\Subset\hX$ and $T>0$.  Uniformly over all relaxed controls and all initial states $X\in K$,
\begin{equation}\label{eq:uniform-fourth-moment-control}
 \sup_{\pi}\sup_{X\in K}
 \E_X\left[\sup_{0\le t\le T}|X_t^\pi|^4\right]<\infty.
\end{equation}
Moreover,
\begin{align}
 \lim_{R\to\infty}\sup_{\pi}\sup_{X\in K}
 \P_X\left(\sup_{0\le t\le T}|X_t^\pi|\ge R\right)&=0,
 \label{eq:uniform-no-escape-control}\\
 \lim_{\delta\downarrow0}\sup_{\pi}\sup_{X\in K}
 \P_X\left(\min_{i<j}\inf_{0\le t\le T}
 |X_t^{\pi,i}-X_t^{\pi,j}|\le\delta\right)&=0.
 \label{eq:uniform-no-near-collision-control}
\end{align}
\end{lemma}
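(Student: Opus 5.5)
The plan is to derive the two limit statements from uniform moment bounds, plus a Girsanov reduction to Brownian motion for the collision estimate. Throughout, $d\ge2$ as in \eqref{eq:d-ge-2}, and the solution is the global one given by Proposition~\ref{prop:controlled-dynamics}.

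\emph{Step 1 (fourth moment).} Itô's formula for $|X_t^\pi|^2$ gives
\[
 |X_t^\pi|^2=|X|^2-2\int_0^tX_s^\pi\cdot\nabla E(X_s^\pi)\dd s+2m\int_0^t\bar u_s^\pi\dd s+2\int_0^t\sigma_s^\pi X_s^\pi\cdot\dd W_s .
\]
By \eqref{eq:dissipativity-supergradient} and Proposition~\ref{prop:C1}, the drift integral is bounded above by $m_2(\nu)t$. Hence
\[
 \sup_{t\le T}|X_t^\pi|^2\le |X|^2+(m_2(\nu)+2m)T+2\sup_{t\le T}\Bigl|\int_0^t\sigma_s^\pi X_s^\pi\cdot\dd W_s\Bigr| .
\]
I will work up to the localizing time $\zeta_R$, square, and apply Burkholder--Davis--Gundy. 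With $(\sigma^\pi)^2\le2$, this bounds the martingale term by $C\,\E_X\int_0^T|X_s^\pi|^2\dd s$. By \eqref{eq:moment-controlled}, that quantity is at most $T\bigl(\sup_K|X|^2+(m_2(\nu)+2m)/\pmin\bigr)$, independently of $\pi$. Letting $R\to\infty$ with Fatou's lemma gives \eqref{eq:uniform-fourth-moment-control}. Markov's inequality, $\P_X(\sup_{t\le T}|X_t^\pi|\ge R)\le C_{K,T}R^{-4}$, then gives \eqref{eq:uniform-no-escape-control}.

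\emph{Step 2 (near collisions; the main obstacle).} The difficulty is uniformity in $\pi$ and in $X\in K$. Fix $R$ larger than $\sup_K|X|$, and put $r_K:=\min_K\mathfrak d>0$. Let $\sigma_R:=T\wedge\zeta_R$. As in Step~3 of Proposition~\ref{prop:controlled-dynamics}, define the kernel $\gamma_t=(\sigma^\pi_t)^{-1}b(X^\pi_t)\id_{\{t<\sigma_R\}}$ with $b=-\nabla E$. By \eqref{eq:growth-supergradient}, $|\gamma_t|\le M_R/\sqrt{2a}$ with $M_R$ independent of $\pi$.

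Under the measure $\Q=\Q_{R,T}$, each difference $X^{\pi,i}_{t\wedge\sigma_R}-X^{\pi,j}_{t\wedge\sigma_R}$ equals $z+\widetilde B^{ij}_{A_t}$, where $|z|=|x_i-x_j|\in[r_K,2R]$ and $A_t\le4T$. Let $G_\delta$ be the event that the near-collision in \eqref{eq:uniform-no-near-collision-control} occurs. On $G_\delta\cap\{\sup_{t\le T}|X_t^\pi|<R\}$ the stopped and unstopped paths coincide, so some $z+\widetilde B^{ij}$ enters the ball $\overline B(0,\delta)$ before time $4T$. Therefore
\[
 \Q\bigl(G_\delta\cap\{\textstyle\sup_{t\le T}|X_t^\pi|<R\}\bigr)\le \binom N2\,\beta(\delta),
 \qquad
 \beta(\delta):=\sup_{r_K\le|z|\le 2R}\P\Bigl(\inf_{s\le4T}|z+B_s|\le\delta\Bigr).
\]
This bound is uniform in $\pi$ and in $X\in K$.

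For $\beta(\delta)\to0$ I will use explicit radial estimates. For $d\ge3$, $\beta(\delta)\le(\delta/r_K)^{d-2}$. For $d=2$, take $L>2R$. Then
\[
 \beta(\delta)\le \frac{\log(L/r_K)}{\log(L/\delta)}+\P\Bigl(\sup_{s\le4T}|B_s|\ge L-2R\Bigr),
\]
so I let $\delta\to0$ first and then $L\to\infty$.

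To return to $\P$, I will use Cauchy--Schwarz:
\[
 \P(A)\le \bigl(\E_\Q[\mathcal E_T^{-2}]\bigr)^{1/2}\,\Q(A)^{1/2} .
\]
Since $\gamma$ is uniformly bounded, $\E_\Q[\mathcal E_T^{-2}]\le\exp(CM_R^2T/a)$, a constant depending only on $R$ and $T$. Combining,
\[
 \P_X(G_\delta)\le \P_X\Bigl(\sup_{t\le T}|X_t^\pi|\ge R\Bigr)+e^{CM_R^2T/a}\Bigl(\tbinom N2\beta(\delta)\Bigr)^{1/2}.
\]
Take the supremum over $\pi$ and $X\in K$, let $\delta\downarrow0$, and then let $R\to\infty$ using Step~1. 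This gives \eqref{eq:uniform-no-near-collision-control}.
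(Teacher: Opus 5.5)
Your proof is correct. Its collision half has the same architecture as the paper's proof: Girsanov removal of the drift up to $T\wedge\zeta_R$, with a kernel bounded independently of $\pi$; the time-changed Brownian representation of each pair difference, with clock at most $4T$; and a moment bound on $\dd\P/\dd\Q_{R,T}$. Your Cauchy--Schwarz step is the case $p=2$ of \eqref{eq:girsanov-holder-collision}.

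The genuine differences are in two sub-steps.

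\emph{Fourth moment.} The paper stops at $\zeta_n$ and uses only linear growth of the drift, BDG for the bounded-coefficient integral $\int\sigma^\pi\,\dd W$, and Gronwall. You instead apply It\^o's formula to $|X_t^\pi|^2$, discard the drift through the one-sided bound $-2X\cdot\nabla E(X)\le m_2(\nu)$, and control $\int\sigma^\pi X^\pi\cdot\dd W$ by BDG together with the already established estimate \eqref{eq:moment-controlled}. This avoids Gronwall and gives a bound polynomial rather than exponential in $T$. The price is that at this step you rely on dissipativity and on the earlier $L^2$ bound, whereas the paper needs only linear growth of the drift.

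\emph{Uniform Brownian small-ball estimate.} The paper argues softly. It uses polarity of points for each fixed $z$, then a finite covering of the annulus $r_K\le|z|\le M$, plus a far-field bound. You use explicit hitting probabilities instead: $(\delta/|z|)^{d-2}$ for $d\ge3$, and the logarithmic harmonic measure of $\{\delta<|y|<L\}$ for $d=2$. To do this you first confine $|z|\le 2R$ through the choice $R>\sup_K|X|$. Your route yields an explicit rate in $\delta$, polynomial for $d\ge3$ and logarithmic for $d=2$. The paper's route needs nothing beyond polarity and treats all $d\ge2$ uniformly.
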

\begin{proof}
We first prove the fourth-moment estimate with the stopping made explicit.  By
Propositions~\ref{prop:supergradient-growth} and \ref{prop:C1},
\begin{equation}\label{eq:linear-growth-drift-control}
 |b(Y)|^2\le C(1+|Y|^2),
 \qquad Y\in\hX.
\end{equation}
For $n\ge1$, let
$\zeta_n:=\inf\{t\ge0:|X_t^\pi|\ge n\}$ and put
$Y_t^n:=X_{t\wedge\zeta_n}^\pi$.  The stopped process satisfies
\begin{equation}\label{eq:stopped-equation-fourth-moment}
 Y_t^n
 =X+\int_0^t\id_{\{s<\zeta_n\}}b(Y_s^n)\dd s
 +\int_0^t\id_{\{s<\zeta_n\}}\sigma_s^\pi\dd W_s.
\end{equation}
The elementary inequality $|z_1+z_2+z_3|^4\le27\sum_k|z_k|^4$, H\"older's inequality in time,
\eqref{eq:linear-growth-drift-control}, and the Burkholder--Davis--Gundy inequality give, for
$t\le T$,
\begin{align}
 \E_X\left[\sup_{0\le r\le t}|Y_r^n|^4\right]
 &\le C_T\left(1+|X|^4+
 \int_0^t\E_X\left[\sup_{0\le r\le s}|Y_r^n|^4\right]\dd s\right).
 \label{eq:stopped-fourth-moment-control}
\end{align}
For completeness, the drift term in \eqref{eq:stopped-equation-fourth-moment} is bounded by
\begin{align*}
 \E\sup_{r\le t}\left|\int_0^{r\wedge\zeta_n}b(X_s^\pi)\dd s\right|^4
 &\le t^3\int_0^t\E|b(Y_s^n)|^4\dd s\\
 &\le C_T\left(1+\int_0^t
 \E\sup_{0\le r\le s}|Y_r^n|^4\dd s\right),
\end{align*}
while the stochastic integral contributes at most
\[
 C\E\left(\int_0^{t\wedge\zeta_n}(\sigma_s^\pi)^2\dd s\right)^2
 \le4CT^2.
\]
Gronwall's lemma makes the right-hand side of
\eqref{eq:stopped-fourth-moment-control} independent of $n$, of the control, and, when $X\in K$, of
the initial state.  Since Proposition~\ref{prop:controlled-dynamics} gives
$\zeta_n\uparrow\infty$ almost surely, $Y^n\to X^\pi$ uniformly on $[0,T]$ almost surely.  Fatou's
lemma applied to the stopped processes now yields \eqref{eq:uniform-fourth-moment-control}.
Markov's inequality immediately gives \eqref{eq:uniform-no-escape-control}.
We next prove the uniform near-collision estimate.  Fix $R>0$ and write
\[
 \zeta_R:=\inf\{t\ge0:|X_t^\pi|\ge R\}.
\]
Let
\[
 M_R:=\sup\{|b(Y)|:Y\in\hX,\ |Y|\le R\}<\infty,
 \qquad
 K_R:=M_R/\sqrt{2a}.
\]
Define
\[
 \gamma_t:=(\sigma_t^\pi)^{-1}b(X_t^\pi)
 \id_{\{t<T\wedge\zeta_R\}}.
\]
Then $|\gamma_t|\le K_R$.  With $\mathcal E_T$ defined as in
\eqref{eq:Girsanov-density-control}, let $\dd\Q_{R,T}=\mathcal E_T\dd\P$.  To quantify the
comparison between the two measures, fix $p>1$ and let $q=p/(p-1)$.  Under $\Q_{R,T}$,
\[
 \frac{\dd\P}{\dd\Q_{R,T}}
 =\exp\left(\int_0^T\gamma_s\cdot\dd W_s^{R,T}
 -\frac12\int_0^T|\gamma_s|^2\dd s\right).
\]
Writing the $p$th power as the stochastic exponential with integrand $p\gamma$ multiplied by its
finite-variation correction gives
\begin{align}
 \E_{\Q_{R,T}}\left[
 \left(\frac{\dd\P}{\dd\Q_{R,T}}\right)^p\right]
 &\le\exp\left(\frac{p(p-1)}2K_R^2T\right).
 \label{eq:RN-p-moment-collision}
\end{align}
Therefore H\"older's inequality gives, for every $A\in\Fc_T$,
\begin{equation}\label{eq:girsanov-holder-collision}
 \P_X(A)
 =\E_{\Q_{R,T}}\left[
 \frac{\dd\P}{\dd\Q_{R,T}}\id_A\right]
 \le \exp\left(\frac{p-1}{2}K_R^2T\right)
 \Q_{R,T}(A)^{1/q}.
\end{equation}
This estimate is uniform over $X\in K$ and over the relaxed control.
For a fixed pair $i<j$, the stopped difference under $\Q_{R,T}$ has the representation
\begin{equation}\label{eq:pair-time-change-localization}
 X_{t\wedge\zeta_R}^{\pi,i}-X_{t\wedge\zeta_R}^{\pi,j}
 =x_i-x_j+\widetilde B^{ij}_{A_t},
 \qquad
 A_t=4\int_0^{t\wedge\zeta_R}\bar u_s^\pi\dd s\le4T,
\end{equation}
for a $d$-dimensional Brownian motion $\widetilde B^{ij}$.  Since $A_t\in[0,4T]$, entry of the
time-changed difference into $\overline B_\delta(0)$ before $T\wedge\zeta_R$ is contained in
the event that an ordinary Brownian motion enters $\overline B_\delta(0)$ before time $4T$.
Since $K\Subset\hX$,
\begin{equation}\label{eq:rK-control}
 r_K:=\min_{X\in K}\mathfrak d(X)>0.
\end{equation}
We claim that, for a standard $d$-dimensional Brownian motion $B$,
\begin{equation}\label{eq:brownian-small-ball-localization}
 \lim_{\delta\downarrow0}
 \sup_{|z|\ge r_K}
 \P\left(\inf_{0\le s\le4T}|z+B_s|\le\delta\right)=0.
\end{equation}
We give the details of this uniform Brownian estimate.  Fix $\varepsilon>0$.  First choose
$M>r_K+1$ so large that
\[
 \P\left(\sup_{0\le s\le4T}|B_s|\ge M-1\right)<\varepsilon.
\]
For $|z|>M$ and $0<\delta\le1$, hitting $\overline B_\delta(0)$ from $z$ before time $4T$ requires
$\sup_{s\le4T}|B_s|\ge|z|-\delta>M-1$, so the corresponding probability is less than
$\varepsilon$.  It remains to consider the compact annulus
$A:=\{z:r_K\le|z|\le M\}$.  For each fixed $z\in A$, the events
\[
 H_\delta(z):=\left\{\inf_{0\le s\le4T}|z+B_s|\le\delta\right\}
\]
decrease, as $\delta\downarrow0$, to the event that the Brownian path hits the polar point $-z$.
Thus $\P(H_\delta(z))\downarrow0$.  Choose $\delta_z>0$ such that
$\P(H_{2\delta_z}(z))<\varepsilon$.  If $|z'-z|<\delta_z$, then, under the coupling using the same
Brownian path,
\[
 H_{\delta_z}(z')\subset H_{2\delta_z}(z).
\]
A finite collection of the balls $B(z,\delta_z)$ covers $A$.  Taking the minimum of the finitely
many radii shows that
$\sup_{z\in A}\P(H_\delta(z))<\varepsilon$ for all sufficiently small $\delta$.  Together with the
far-field estimate, this proves \eqref{eq:brownian-small-ball-localization}.
Combining \eqref{eq:pair-time-change-localization} and
\eqref{eq:brownian-small-ball-localization}, then applying
\eqref{eq:girsanov-holder-collision}, gives
\[
 \lim_{\delta\downarrow0}\sup_\pi\sup_{X\in K}
 \P_X\left(\inf_{0\le t\le T\wedge\zeta_R}
 |X_t^{\pi,i}-X_t^{\pi,j}|\le\delta\right)=0.
\]
There are only finitely many pairs.  Hence
\begin{align*}
 &\P_X\left(\min_{i<j}\inf_{0\le t\le T}
 |X_t^{\pi,i}-X_t^{\pi,j}|\le\delta\right)\\
 &\quad\le
 \P_X(\zeta_R\le T)
 +\sum_{i<j}\P_X\left(\inf_{0\le t\le T\wedge\zeta_R}
 |X_t^{\pi,i}-X_t^{\pi,j}|\le\delta\right).
\end{align*}
First let $\delta\downarrow0$ and then let $R\to\infty$, using
\eqref{eq:uniform-no-escape-control}.  This proves
\eqref{eq:uniform-no-near-collision-control}.
\end{proof}
The next lemma is the analytic bridge from the controlled dynamics to dynamic programming.  Its
growth estimate makes the value finite, while its equicontinuity statement supplies the countable
near-optimal continuation selector used in Proposition~\ref{prop:DPP}.
\begin{lemma}\label{lem:value-continuity}
The value function is finite and satisfies
\begin{equation}\label{eq:value-growth}
 -\frac{\lambda\log(1-a)}{\vartheta}
 \le v_\lambda(X)\le C_\lambda(1+|X|^2),
 \qquad X\in\hX.
\end{equation}
Moreover, for every compact set $K\Subset\hX$, the family of maps
\[
 X\longmapsto J_\lambda(X;\pi),
\]
indexed by controls having finite cost at one point of $K$, is equicontinuous on $K$.  Every such
control has finite cost at all points of $K$.  Consequently, $v_\lambda$ is continuous on $\hX$.
\end{lemma}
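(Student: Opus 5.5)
The plan has three parts: the two-sided growth bound \eqref{eq:value-growth}, equicontinuity of the costs on compact sets, and continuity of $v_\lambda$ as an infimum of an equicontinuous family.

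\textbf{Growth bounds.} The lower bound is immediate from $E\ge0$ and \eqref{eq:entropy-lower}. For every control,
\[
J_\lambda(X;\pi)\ge\int_0^\infty e^{-\vartheta t}\lambda(-\log(1-a))\dd t=-\frac{\lambda\log(1-a)}{\vartheta}.
\]
For the upper bound, take the constant uniform control $\pi_t\equiv\mathrm{Unif}[a,1]$. Its entropy is $-\log(1-a)$. Combining \eqref{eq:E-growth} with the moment estimate \eqref{eq:moment-controlled} gives
\[
\E_X E(X_t^\pi)\le \pmax\bigl(|X|^2+C\bigr)+m_2(\nu).
\]
Integrating against $e^{-\vartheta t}$ yields $v_\lambda(X)\le C_\lambda(1+|X|^2)$.

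\textbf{Equicontinuity: splitting the cost.} Fix $K\Subset\hX$, a control $\pi$, and $X,Z\in K$. Since the entropy term $\lambda\int e^{-\vartheta t}\Ent(\pi_t)\dd t$ is a functional of $\pi$ alone, it does not depend on the initial point. Hence
\[
J_\lambda(X;\pi)-J_\lambda(Z;\pi)=\E\int_0^\infty e^{-\vartheta t}\bigl(E(X_t^\pi)-E(Z_t^\pi)\bigr)\dd t,
\]
with both processes driven by the same $W$ and the same $\pi$. This difference is finite uniformly, because each energy term is bounded by $C(1+|X|^2)/\vartheta$. It follows that finite cost at one point of $K$ is equivalent to finiteness of the entropy term, and therefore to finite cost at every point. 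It then suffices to show that this energy difference tends to $0$ as $|X-Z|\to0$, uniformly in $\pi$.

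\textbf{Equicontinuity: the three contributions.} Fix $\varepsilon>0$ and bound the difference in three pieces.
\begin{itemize}
\item \emph{Tail in time.} Choose $T$ so that the contribution from $t>T$ is below $\varepsilon$, using the uniform quadratic bound on $\E E(X_t^\pi)$ from the previous step.
\item \emph{Bad event on $[0,T]$.} Lemma~\ref{lem:uniform-localization-control} gives $R,\delta$ such that, outside an event $B$ with $\P(B)<\varepsilon$ uniformly in $\pi$ and in the initial point in $K$, both paths stay in
\[
K_{R,\delta}:=\{|Y|\le R,\ \mathfrak d(Y)\ge\delta\}\Subset\hX.
\]
The contribution of $B$ is controlled by Cauchy--Schwarz together with the fourth-moment bound \eqref{eq:uniform-fourth-moment-control}; this gives $C\varepsilon^{1/2}$.
\item \emph{Good event on $[0,T]$.} On $B^c$ we use pathwise synchronous coupling. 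The difference $D_t=X_t^\pi-Z_t^\pi$ has no noise term, since the noise cancels because $\sigma^\pi_t$ is the same scalar for both. Thus
\[
\frac{\dd}{\dd t}D_t=-\bigl(\nabla E(X_t)-\nabla E(Z_t)\bigr).
\]
A cleaner route than a Lipschitz estimate on $K_{R,\delta}$ (which is not convex) is the one-sided bound $D^2E\preccurlyeq P$ from Theorem~\ref{thm:C2}. Alternatively, the concavity in Lemma~\ref{lem:DC} gives monotonicity of $P X-\nabla E$, that is,
\[
(\nabla E(X)-\nabla E(Z))\cdot(X-Z)\ge -\pmax|X-Z|^2 .
\]
This holds globally on $\hX$, because supergradients of a concave function are monotone and the gradient is a supergradient by Proposition~\ref{prop:C1}. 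Consequently $\frac{\dd}{\dd t}|D_t|^2\le2\pmax|D_t|^2$, so
\[
|D_t|\le e^{\pmax t}|X-Z|
\]
for all $t$ and all paths, without any localization. Combined with Proposition~\ref{prop:lip} on the ball of radius $R$, this bounds the good-event contribution by $L_RTe^{\pmax T}|X-Z|$.
\end{itemize}

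With the synchronous-coupling bound, the bad event is only needed for the size localization, not for the collision localization. Taking the supremum over $\pi$ gives equicontinuity.

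\textbf{Continuity of $v_\lambda$.} The infimum defining $v_\lambda$ may be restricted to finite-cost controls, which by the above are finite everywhere on $K$. An infimum of an equicontinuous family of real functions is continuous on $K$. Since $K$ is arbitrary, $v_\lambda$ is continuous on $\hX$.

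The main obstacle I expect is justifying the global one-sided monotonicity of $\nabla E$ across $\hX$. The segment between two points may cross $\Coll$, so this must come from concavity of $E-\frac12|X|_P^2$ on all of $\Xc^N$, not from the Hessian bound along segments. I would check carefully that the gradients on $\hX$ are genuine supergradients of that concave function.
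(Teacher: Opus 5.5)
There is a genuine gap in your good-event step: the one-sided inequality you use has the wrong sign. Concavity of $f:=E-\frac12|X|_P^2$ (Lemma~\ref{lem:DC}), with $\nabla f=\nabla E-PX$ a supergradient at points of $\hX$, gives $(\nabla f(X)-\nabla f(Z))\cdot(X-Z)\le0$, that is,
\[
 (\nabla E(X)-\nabla E(Z))\cdot(X-Z)\le|X-Z|_P^2 .
\]
The bound $D^2E\preccurlyeq P$ gives the same thing. This is an upper bound. But $\frac{\dd}{\dd t}|D_t|^2=-2(\nabla E(X_t)-\nabla E(Z_t))\cdot D_t$ needs a lower bound $\ge -C|D_t|^2$. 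Semiconcavity only limits how fast coupled paths can approach each other, not how fast they separate.

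The lower bound is in fact false near $\Coll$, where $E$ has concave kinks and the drift repels colliding sites at unit speed. In Example~\ref{ex:one-dimensional-collision}, $E=\frac14(x_1^2+x_2^2)-\frac14(x_1+x_2)-\frac18|x_1-x_2|+\frac16$. For $X=(\frac12-\varepsilon,\frac12+\varepsilon)$ and $Z=(\frac12+\varepsilon,\frac12-\varepsilon)$ one gets $(\nabla E(X)-\nabla E(Z))\cdot(X-Z)=4\varepsilon^2-\varepsilon$, while $|X-Z|^2=8\varepsilon^2$.

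The same happens for $d\ge2$ when two nearly coincident sites of equal mass $p$ are exchanged. The inner product is then $2p|x_i-x_j|^2-2p\,(c_i-c_j)\cdot(x_i-x_j)$. Here $(c_i-c_j)\cdot(x_i-x_j)\ge\kappa|x_i-x_j|$ with $\kappa>0$ independent of $|x_i-x_j|$. The reason is that $V_i$ and $V_j$ lie on opposite sides of a hyperplane orthogonal to $x_i-x_j$, and each carries mass at least $\pmin$ with density at most $\overline\rho$. So $|X_t-Z_t|\le e^{\pmax t}|X-Z|$ fails without localization, and your claim that the bad event is needed only for size localization is incorrect.

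The repair is the paper's own argument, which your bullet structure already nearly contains. Keep the near-collision part of the bad event; \eqref{eq:uniform-no-near-collision-control} makes it small uniformly in $\pi$ and $X\in K$. On the good event both paths remain in $K_{R,\delta}\Subset\hX$ up to time $T$. There $\nabla E\in C^1(\hX)$ is Lipschitz despite the non-convexity of $K_{R,\delta}$, because a locally Lipschitz map is Lipschitz on every compact subset of its open domain. Indeed, if $|\nabla E(X_n)-\nabla E(Z_n)|>n|X_n-Z_n|$ on $K_{R,\delta}$, boundedness of $\nabla E$ there forces $|X_n-Z_n|\to0$, and a common limit point contradicts local Lipschitz continuity at that point.

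Gronwall then gives $|X_t-Z_t|\le e^{L_{R,\delta}t}|X-Z|$ on the good event. The rest of your proposal matches the paper's proof: the growth bounds, the cancellation of the entropy term, the Cauchy--Schwarz bad-event and time-tail estimates, and continuity of the infimum of an equicontinuous family.
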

\begin{proof}
The lower bound follows from $E\ge0$ and \eqref{eq:entropy-lower}.  For the upper bound, use the
constant action given by the uniform density $(1-a)^{-1}\id_{[a,1]}$.  Its entropy is
$-\log(1-a)$, and \eqref{eq:E-growth} together with \eqref{eq:moment-controlled} gives
\[
 \E_XE(X_t^{\pi^0})\le C\bigl(1+e^{-\pmin t}|X|^2\bigr).
\]
Integration against $e^{-\vartheta t}$ proves the upper bound in \eqref{eq:value-growth}.
For an arbitrary admissible control, decompose
\begin{align*}
 J_\lambda(X;\pi)
 &=J_E(X;\pi)+J_{\mathrm{ent}}(\pi),\\
 J_E(X;\pi)
 &:=\E_X\int_0^\infty e^{-\vartheta t}E(X_t^\pi)\dd t,\\
 J_{\mathrm{ent}}(\pi)
 &:=\lambda\E\int_0^\infty e^{-\vartheta t}
 \Ent_{[a,1]}(\pi_t)\dd t.
\end{align*}
The moment bound \eqref{eq:moment-controlled} and the quadratic growth of $E$ imply that
$J_E(X;\pi)<\infty$ for every $X\in\hX$ and every admissible control.  The entropy term is
independent of the initial state.  Hence, if $J_\lambda(Y;\pi)<\infty$ at one point $Y$, then
$J_{\mathrm{ent}}(\pi)<\infty$ and $J_\lambda(X;\pi)<\infty$ for every $X\in\hX$.  For such a
control,
\begin{equation}\label{eq:entropy-cancels-value-continuity}
 J_\lambda(X;\pi)-J_\lambda(Z;\pi)
 =J_E(X;\pi)-J_E(Z;\pi).
\end{equation}
This observation avoids subtracting extended real-valued costs.
Fix $K\Subset\hX$, $X,Z\in K$, and a common open-loop relaxed control $\pi$ of finite cost at one
point of $K$.  Construct the two state processes with the same Brownian motion.  The diffusion
coefficient $\sigma_t^\pi$ depends on the control but not on the state, so the stochastic terms
cancel exactly in the difference.  For $R>0$ and $\delta>0$, let
\begin{equation}\label{eq:safe-domain-value-continuity}
 D_{R,\delta}:=\{Y\in\hX:|Y|<R,\ \mathfrak d(Y)>\delta\}
\end{equation}
and let $\sigma_{R,\delta}$ be the first time at which either coupled trajectory leaves this set.
By a union bound and Lemma~\ref{lem:uniform-localization-control}, $R$ may be chosen large and then
$\delta$ small so that, uniformly over the control and over $X,Z\in K$,
\begin{equation}\label{eq:bad-event-value-continuity}
 \P(\sigma_{R,\delta}\le T)\le\varepsilon.
\end{equation}
On $\overline D_{R,\delta}$, the drift $b=-\nabla E$ is Lipschitz, say with constant
$L_{R,\delta}$.  Hence, for $0\le t\le T\wedge\sigma_{R,\delta}$,
\[
 |X_t^\pi-Z_t^\pi|
 \le |X-Z|+L_{R,\delta}\int_0^t|X_s^\pi-Z_s^\pi|\dd s,
\]
and Gronwall's lemma gives
\begin{equation}\label{eq:coupling-value-continuity}
 |X_t^\pi-Z_t^\pi|
 \le e^{L_{R,\delta}t}|X-Z|,
 \qquad 0\le t\le T\wedge\sigma_{R,\delta}.
\end{equation}
On the good event $\{\sigma_{R,\delta}>T\}$, local Lipschitz continuity of $E$ and
\eqref{eq:coupling-value-continuity} give
\begin{equation}\label{eq:good-cost-value-continuity}
 \E\left[\id_{\{\sigma_{R,\delta}>T\}}
 \int_0^T e^{-\vartheta t}|E(X_t^\pi)-E(Z_t^\pi)|\dd t\right]
 \le C_{R,\delta,T}|X-Z|.
\end{equation}
On the bad event, the quadratic growth bound for $E$ gives
\[
 |E(X_t^\pi)-E(Z_t^\pi)|
 \le C\bigl(1+|X_t^\pi|^2+|Z_t^\pi|^2\bigr).
\]
Cauchy--Schwarz, \eqref{eq:bad-event-value-continuity}, and the uniform fourth-moment estimate yield
\begin{equation}\label{eq:bad-cost-value-continuity}
 \E\left[\id_{\{\sigma_{R,\delta}\le T\}}
 \int_0^T e^{-\vartheta t}|E(X_t^\pi)-E(Z_t^\pi)|\dd t\right]
 \le C_{K,T}\varepsilon^{1/2}.
\end{equation}
Finally, \eqref{eq:E-growth} and \eqref{eq:moment-controlled} imply
\begin{equation}\label{eq:tail-value-continuity}
 \sup_\pi\sup_{Y\in K}
 \E_Y\int_T^\infty e^{-\vartheta t}E(X_t^\pi)\dd t
 \le C_Ke^{-\vartheta T}.
\end{equation}
Combining \eqref{eq:entropy-cancels-value-continuity} and
\eqref{eq:good-cost-value-continuity}--\eqref{eq:tail-value-continuity} gives
\begin{equation}\label{eq:cost-continuity-pre-DPP}
 |J_\lambda(X;\pi)-J_\lambda(Z;\pi)|
 \le C_{R,\delta,T}|X-Z|+C_{K,T}\varepsilon^{1/2}+2C_Ke^{-\vartheta T}.
\end{equation}
First choose $T$, then $R$ and $\delta$, and finally $|X-Z|$.  This proves equicontinuity uniformly
over controls of finite cost at one point of $K$.  Applying the estimate to an
$\varepsilon$-optimal control at $X$ and then to an $\varepsilon$-optimal control at $Z$ proves the
continuity of $v_\lambda$.
\end{proof}
\subsection{Canonical shifts, concatenation, and dynamic programming}\label{subsec:DPP}
We now make explicit the canonical operations used in the dynamic programming argument.  For
$t\ge0$ and $\omega,\omega'\in\Omega_W$, define
\begin{equation}\label{eq:path-concatenation-control}
 (\omega\otimes_t\omega')(s)
 :=
 \begin{cases}
  \omega(s),&0\le s\le t,\\
  \omega(t)+\omega'(s-t),&s>t,
 \end{cases}
\end{equation}
and define the increment shift
\begin{equation}\label{eq:Wiener-shift-control}
 (\mathsf S_t\omega)(s):=\omega(t+s)-\omega(t),
 \qquad s\ge0.
\end{equation}
If $\theta$ is a bounded $\mathbb F^0$-stopping time, write
\[
 \omega\otimes_\theta\omega'
 :=\omega\otimes_{\theta(\omega)}\omega',
 \qquad
 \mathsf S_\theta\omega:=\mathsf S_{\theta(\omega)}\omega.
\]
Since $\omega\otimes_\theta\omega'$ agrees with $\omega$ up to time $\theta(\omega)$,
Galmarino's test gives
\begin{equation}\label{eq:stopping-time-stable-concatenation}
 \theta(\omega\otimes_\theta\omega')=\theta(\omega).
\end{equation}
For an admissible control $\pi$ and a past path $\omega$, define the shifted control
\begin{equation}\label{eq:shifted-control-definition}
 \pi_r^{\theta,\omega}(\omega')
 :=\pi_{\theta(\omega)+r}(\omega\otimes_\theta\omega'),
 \qquad r\ge0,
 \quad \omega'\in\Omega_W.
\end{equation}
\begin{lemma}\label{lem:canonical-shift-control}
Let $\theta$ be a bounded $\mathbb F^0$-stopping time.
\begin{enumerate}[label=\textup{(\roman*)},leftmargin=2.3em]
\item For every $\omega\in\Omega_W$, the shifted process
$\pi^{\theta,\omega}$ in \eqref{eq:shifted-control-definition} is an admissible raw-progressive
control.
\item The probability kernel
\begin{equation}\label{eq:canonical-rcpd-kernel}
 K_\theta(\omega,A)
 :=\int_{\Omega_W}\id_A(\omega\otimes_\theta\omega')\P(\dd\omega'),
 \qquad A\in\Fc_\infty^0,
\end{equation}
is a regular conditional probability distribution of Wiener measure given $\Fc_\theta^0$.
Equivalently, for every nonnegative or bounded Borel functional $F$ on $\Omega_W$,
\begin{equation}\label{eq:canonical-conditional-formula}
 \E[F\mid\Fc_\theta^0](\omega)
 =\int_{\Omega_W}F(\omega\otimes_\theta\omega')\P(\dd\omega')
 \qquad\text{for }\P\text{-a.e. }\omega.
\end{equation}
\item Let $X^{X,\pi}$ denote the raw-adapted version constructed in
Proposition~\ref{prop:controlled-dynamics}.  For $\P$-almost every past path $\omega$, one has,
for Wiener-almost every future path $\omega'$,
\begin{equation}\label{eq:strong-flow-control}
 X_{\theta(\omega)+r}^{X,\pi}(\omega\otimes_\theta\omega')
 =X_r^{X_\theta^{X,\pi}(\omega),\,\pi^{\theta,\omega}}(\omega'),
 \qquad r\ge0.
\end{equation}
Consequently,
\begin{align}
 &\E_X\left[
  \left.
  \int_\theta^\infty e^{-\vartheta s}
  \ell_\lambda(X_s^\pi,\pi_s)\dd s
  \right|\Fc_\theta^0\right](\omega)
\notag\\
 &\qquad=e^{-\vartheta\theta(\omega)}
 J_\lambda\left(X_\theta^{X,\pi}(\omega);\pi^{\theta,\omega}\right)
 \qquad\text{for }\P\text{-a.e. }\omega.
 \label{eq:conditional-tail-cost-control}
\end{align}
\item Let $\pi^0,\pi^1,\pi^2,\ldots$ be admissible controls and let
$(B_k)_{k\ge1}$ be a countable $\Fc_\theta^0$-measurable partition of $\Omega_W$.  Define
\begin{equation}\label{eq:concatenated-control-definition}
 \bar\pi_s(\omega)
 :=
 \begin{cases}
  \pi_s^0(\omega),&s<\theta(\omega),\\[1mm]
  \displaystyle
  \sum_{k\ge1}\id_{B_k}(\omega)
  \pi_{s-\theta(\omega)}^k(\mathsf S_\theta\omega),
  &s\ge\theta(\omega).
 \end{cases}
\end{equation}
Then $\bar\pi$ is admissible.  Its state agrees with the state controlled by $\pi^0$ up to
$\theta$, and, on $B_k$, its post-$\theta$ state is the strong solution restarted from
$X_\theta^{X,\pi^0}$ under the continuation control $\pi^k$.  In particular,
\begin{align}
 &\E_X\left[
  \left.
  \int_\theta^\infty e^{-\vartheta s}
  \ell_\lambda(X_s^{\bar\pi},\bar\pi_s)\dd s
  \right|\Fc_\theta^0\right]
\notag\\
 &\qquad=e^{-\vartheta\theta}
 \sum_{k\ge1}\id_{B_k}
 J_\lambda(X_\theta^{X,\pi^0};\pi^k).
 \label{eq:conditional-pasted-cost-control}
\end{align}
\end{enumerate}
\end{lemma}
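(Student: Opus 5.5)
The plan is to prove the four items in order. Each one rests on two facts about canonical Wiener space: Galmarino's test, and the independence of Brownian increments after a stopping time, i.e.\ the strong Markov property of Brownian motion at $\theta$.

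\emph{Item (i).} We show that $(r,\omega')\mapsto \pi_{\theta(\omega)+r}(\omega\otimes_\theta\omega')$ is $\mathbb F^0$-progressive for each fixed $\omega$. On $[0,r]$, the map $\omega'\mapsto\omega\otimes_\theta\omega'$ depends only on $\omega'|_{[0,r]}$. It is continuous, hence Borel, and maps $\Fc^0_r$ into $\Fc^0_{\theta(\omega)+r}$. The map $(s,\omega')\mapsto(\theta(\omega)+s,\omega\otimes_\theta\omega')$ is then measurable from $\mathcal B([0,r])\otimes\Fc^0_r$ into $\mathcal B([0,\theta(\omega)+r])\otimes\Fc^0_{\theta(\omega)+r}$. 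Composing with the progressive map $\pi$ gives (i).

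\emph{Item (ii).} Write $\omega=\omega\otimes_\theta\mathsf S_\theta\omega$. By the strong Markov property of Brownian motion, $\mathsf S_\theta\omega$ is a Wiener path independent of $\Fc^0_\theta$. Because $\Fc^0_\theta$ is countably generated by $\omega(\cdot\wedge\theta)$ (Galmarino), the freezing lemma then gives \eqref{eq:canonical-conditional-formula} for bounded Borel $F$, and monotone convergence extends it to nonnegative $F$. The kernel property of $K_\theta$ follows from Fubini, since $(\omega,\omega')\mapsto\omega\otimes_\theta\omega'$ is jointly measurable.

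\emph{Item (iii).} This is the main obstacle, because strong solutions are defined only up to null sets. The approach is to use the raw-adapted construction of Proposition~\ref{prop:controlled-dynamics}.

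\textbf{Noise.} First treat the stochastic integral $M^\pi$. It is the almost-sure locally uniform limit of integrals of raw predictable step processes. For step integrands the flow identity
\[
M^\pi_{\theta+r}(\omega\otimes_\theta\omega')-M^\pi_\theta(\omega)=M^{\pi^{\theta,\omega}}_r(\omega')
\]
holds pathwise. By (ii), a $\P$-null set $N$ satisfies $\int \id_N(\omega\otimes_\theta\omega')\P(\dd\omega')=0$ for a.e.\ $\omega$. Hence the approximation converges for Wiener-a.e.\ $\omega'$ simultaneously on both sides, after passing to a common subsequence. Here we use that the $L^2$ approximations of $\sigma^{\pi^{\theta,\omega}}$ can be taken to be the shifts of those for $\sigma^\pi$; this again follows from (ii) and Fubini.

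\textbf{Drift.} Next, the localized solutions $Y^n$ are pathwise continuous functionals of the noise, obtained by Picard iteration with a Lipschitz drift $b_n$. Therefore $r\mapsto X^{X,\pi}_{\theta+r}(\omega\otimes_\theta\omega')$ solves the equation started from $X^{X,\pi}_\theta(\omega)$ with noise $M^{\pi^{\theta,\omega}}(\omega')$. Pathwise uniqueness, Proposition~\ref{prop:controlled-dynamics}, then yields \eqref{eq:strong-flow-control}.

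\textbf{Cost.} Equation \eqref{eq:conditional-tail-cost-control} then follows by applying (ii) to the nonnegative functional $F=\int_\theta^\infty e^{-\vartheta s}\ell_\lambda(X_s,\pi_s)\dd s$. After the change of variables $s=\theta+r$, the identity $\pi_{\theta+r}(\omega\otimes_\theta\omega')=\pi^{\theta,\omega}_r(\omega')$ and \eqref{eq:strong-flow-control} give the claim. Nonnegativity of $\ell_\lambda$ allows the value $+\infty$.

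\emph{Item (iv).} Admissibility is checked on $\{s<\theta\}$ and on $\{s\ge\theta\}\cap B_k$ separately.

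\textbf{Admissibility.} On the second set, $\mathsf S_\theta\omega$ restricted to $[0,s-\theta]$ is $\Fc^0_s$-measurable, and $B_k\cap\{\theta\le s\}\in\Fc^0_s$. Composing with progressive $\pi^k$ gives progressivity; the countable sum preserves it.

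\textbf{State and cost.} Since $\bar\pi=\pi^0$ before $\theta$, pathwise uniqueness up to $\theta$ gives agreement of the states. Moreover, $\bar\pi^{\theta,\omega}=\pi^k$ for $\omega\in B_k$, using \eqref{eq:stopping-time-stable-concatenation} and $\mathsf S_\theta(\omega\otimes_\theta\omega')=\omega'$. Applying (iii) to $\bar\pi$ therefore yields the restart description and \eqref{eq:conditional-pasted-cost-control}.
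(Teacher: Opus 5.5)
Your proposal is correct and follows the paper's proof essentially step by step. The steps are: measurability of the concatenation map for (i); independence of the post-$\theta$ increments for (ii); for (iii), the shift identity for step integrands extended by $L^2$/BDG approximation, transfer of null sets through the kernel $K_\theta$, and pathwise uniqueness; and for (iv), the identity $\bar\pi^{\theta,\omega}=\pi^k$ on $B_k$. The only difference is packaging in (ii), where you invoke the strong Markov property and the freezing lemma with the stopped path $\omega(\cdot\wedge\theta)$, whereas the paper derives the same formula by hand from deterministic times, finitely valued approximations of $\theta$, and monotone class.
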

\begin{proof}
Fix first a past path $\omega$ and put $t=\theta(\omega)$.  If $r\le R$, then the restriction of
$\omega\otimes_t\omega'$ to $[0,t+r]$ depends on $\omega'$ only through its restriction to
$[0,r]$.  Since $\pi$ is raw progressive, the map
\[
 (r,\omega')\longmapsto
 \pi_{t+r}(\omega\otimes_t\omega')
\]
is measurable on $[0,R]\times\Omega_W$ with respect to
$\mathcal B([0,R])\otimes\Fc_R^0$.  This proves (i).
The map $(\omega,\omega')\mapsto\omega\otimes_\theta\omega'$ is Borel measurable, and the
right-hand side of \eqref{eq:canonical-conditional-formula} is $\Fc_\theta^0$-measurable.  For a
deterministic time, \eqref{eq:canonical-conditional-formula} follows first for bounded cylinder
functionals from the independent-increment property of Brownian motion and then for all bounded
Borel functionals by a monotone-class argument.  For bounded continuous cylinder functionals, approximating a bounded stopping time from above
by finitely valued stopping times and using continuity of the canonical paths gives the same formula
at $\theta$.  A second monotone-class argument then extends it to every bounded Borel functional.
Thus the kernel \eqref{eq:canonical-rcpd-kernel} is a regular conditional probability
distribution, proving (ii).
We next prove the flow identity.  The integral equation for $X^{X,\pi}$ holds outside one
$\P$-null set simultaneously at every rational time, hence at every time by continuity.  Applying
the regular conditional distribution in (ii), for almost every past $\omega$ the same integral
equation holds under $K_\theta(\omega,\cdot)$.  Subtracting its value at $\theta(\omega)$ from its
value at $\theta(\omega)+r$ gives
\begin{align*}
 X_{\theta(\omega)+r}^{X,\pi}(\omega\otimes_\theta\omega')
 ={}&X_\theta^{X,\pi}(\omega)
 -\int_0^r\nabla E\!
 \left(X_{\theta(\omega)+s}^{X,\pi}(\omega\otimes_\theta\omega')\right)\dd s\\
 &+\int_0^r
 \sigma_{\theta(\omega)+s}^\pi(\omega\otimes_\theta\omega')\dd W_s(\omega').
\end{align*}
The shift identity for the stochastic integral is immediate for raw predictable step integrands and
extends to bounded raw-progressive integrands by the It\^o isometry and the
Burkholder--Davis--Gundy inequality.  By \eqref{eq:shifted-control-definition}, the last integrand is
$\sigma_s^{\pi^{\theta,\omega}}(\omega')$.  The right-hand side is therefore the restarted state
equation, and pathwise uniqueness from Proposition~\ref{prop:controlled-dynamics} proves
\eqref{eq:strong-flow-control}.  Formula \eqref{eq:conditional-tail-cost-control} follows from
\eqref{eq:canonical-conditional-formula}, \eqref{eq:strong-flow-control}, the change of variables
$s=\theta+r$, and Tonelli's theorem.
For (iv), first consider an elementary raw-progressive continuation process.  On
$\{s\ge\theta\}$, its value at time $s-\theta$ along $\mathsf S_\theta\omega$ depends only on
$\theta$, the path up to time $\theta$, and the increments between $\theta$ and $s$; it is therefore
$\mathcal B([0,t])\otimes\Fc_t^0$-measurable on $[0,t]\times\Omega_W$.  A monotone-class
argument extends this observation to arbitrary raw-progressive continuation controls.  Since each
$B_k\in\Fc_\theta^0$ and exactly one indicator in
\eqref{eq:concatenated-control-definition} is nonzero, $\bar\pi$ is raw progressive.  The prefix of
$\bar\pi$ agrees with $\pi^0$, so pathwise uniqueness gives equality of the two states up to
$\theta$.  Applying the flow argument after $\theta$ on each $B_k$ proves the restarted-state
identity and then \eqref{eq:conditional-pasted-cost-control}.
\end{proof}
\begin{proposition}\label{prop:DPP}
Let $\tau$ be any bounded stopping time for the raw canonical filtration $\mathbb F^0$.  Then, for
every $X\in\hX$,
\begin{equation}\label{eq:DPP-HJB-detailed}
 v_\lambda(X)=\inf_\pi\E_X\left[
 \int_0^\tau e^{-\vartheta s}\ell_\lambda(X_s^\pi,\pi_s)\dd s
 +e^{-\vartheta\tau}v_\lambda(X_\tau^\pi)\right].
\end{equation}
The same principle holds for control-dependent localized exit times.  Namely, if
$D\Subset\hX$ is open, $T>0$, and
\begin{equation}\label{eq:localized-stopping-time-DPP}
 \tau_{D,T}^\pi
 :=T\wedge\inf\{s\ge0:X_s^\pi\notin D\},
\end{equation}
then, for every $X\in D$,
\begin{equation}\label{eq:DPP-stopping-control}
 v_\lambda(X)=\inf_\pi\E_X\left[
 \int_0^{\tau_{D,T}^\pi}e^{-\vartheta s}
 \ell_\lambda(X_s^\pi,\pi_s)\dd s
 +e^{-\vartheta\tau_{D,T}^\pi}
 v_\lambda(X_{\tau_{D,T}^\pi}^\pi)\right].
\end{equation}
In both formulas, controls for which the displayed expectation is infinite may be omitted from the
infimum.
\end{proposition}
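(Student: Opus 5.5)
The plan is to prove the two inequalities in \eqref{eq:DPP-HJB-detailed} separately, and to treat \eqref{eq:DPP-stopping-control} as the same argument with $\theta:=\tau_{D,T}^\pi$. Two preliminary observations apply throughout. First, $\tau_{D,T}^\pi$ is a bounded $\mathbb F^0$-stopping time, because $X^\pi$ is continuous and raw adapted by Proposition~\ref{prop:controlled-dynamics}. Second, controls with infinite cost contribute $+\infty$ on both sides, so they may be discarded. Write $\Gamma(\pi,\theta)$ for the expectation inside the infimum.

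\emph{The inequality $v_\lambda\ge\inf_\pi\Gamma$.} Fix a control $\pi$ of finite cost and split the time integral at $\theta$. Conditioning on $\Fc_\theta^0$ and using \eqref{eq:conditional-tail-cost-control} from Lemma~\ref{lem:canonical-shift-control}(iii), the tail term equals
\[
 e^{-\vartheta\theta}J_\lambda(X_\theta^\pi;\pi^{\theta,\omega}).
\]
By Lemma~\ref{lem:canonical-shift-control}(i) the shifted control is admissible, so this quantity is at least $e^{-\vartheta\theta}v_\lambda(X_\theta^\pi)$. Hence $J_\lambda(X;\pi)\ge\Gamma(\pi,\theta)$. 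In the localized version, $\theta$ depends on $\pi$, but this causes no difficulty because $\theta$ remains a raw stopping time. Taking the infimum over $\pi$ gives the inequality.

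\emph{The inequality $v_\lambda\le\inf_\pi\Gamma$.} This is the main obstacle, because it requires a measurable selection of near-optimal continuation controls. I will avoid abstract selection theorems and use the equicontinuity in Lemma~\ref{lem:value-continuity}. Fix $\pi^0$ of finite cost and $\varepsilon>0$.

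\emph{Localization.} Choose $T'$ large, and then $R$ large and $\delta$ small. Let $K:=\{Y:|Y|\le R,\ \mathfrak d(Y)\ge\delta\}$. By Lemma~\ref{lem:uniform-localization-control}, together with the moment bound \eqref{eq:moment-controlled} and the quadratic growth of $v_\lambda$ in \eqref{eq:value-growth}, the event $\{X_\theta^{\pi^0}\notin K\}$ contributes at most $\varepsilon$ to the costs of interest. On that event I use the uniform constant control as continuation. Its conditional cost is bounded by $C(1+|X_\theta|^2)$, and this is controlled by Cauchy--Schwarz with the fourth-moment bound and the small probability. In the localized case $\theta$ is bounded by $T$, so the localization happens before time $T$.

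\emph{Countable partition.} Cover the compact set $K$ by finitely many Borel pieces $A_k$ of small diameter, with centers $Y_k$, and pick $\varepsilon$-optimal controls $\pi^k$ at $Y_k$. Each $\pi^k$ has finite cost, so by Lemma~\ref{lem:value-continuity} the map $J_\lambda(\cdot;\pi^k)$ is finite and equicontinuous on $K$. By continuity of $v_\lambda$, for a fine enough partition,
\[
 J_\lambda(Y;\pi^k)\le v_\lambda(Y)+3\varepsilon \quad\text{for } Y\in A_k.
\]
Set $B_k:=\{X_\theta^{\pi^0}\in A_k\}\in\Fc_\theta^0$, and add the remaining set $\{X_\theta^{\pi^0}\notin K\}$ to the partition.

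\emph{Pasting.} Form $\bar\pi$ as in \eqref{eq:concatenated-control-definition}. Formula \eqref{eq:conditional-pasted-cost-control} then gives
\[
 v_\lambda(X)\le J_\lambda(X;\bar\pi)\le\Gamma(\pi^0,\theta)+C\varepsilon.
\]
Since the paste agrees with $\pi^0$ up to $\theta$, the exit time $\tau_{D,T}^{\bar\pi}$ equals $\tau_{D,T}^{\pi^0}$, so the localized formula follows as well. Letting $\varepsilon\to0$ and then infimizing over $\pi^0$ completes the proof.
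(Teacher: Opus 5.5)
Your proposal is correct. The inequality $v_\lambda\ge\inf_\pi\Gamma$ is proved exactly as in the paper, but the near-optimal selection for the reverse inequality takes a genuinely different route.

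\textbf{How the paper selects.} It never localizes $X_\theta$. For every $Y\in\hX$ it picks an $\varepsilon$-optimal $\pi^Y$. It then uses only the continuity of the single map $J_\lambda(\cdot;\pi^Y)$ and of $v_\lambda$ near $Y$ to get $J_\lambda(Z;\pi^Y)\le v_\lambda(Z)+3\varepsilon$ on a ball $O_Y$. Finally it extracts a countable subcover of all of $\hX$ by second countability. The resulting countable Borel partition of the whole state space makes any tail estimate unnecessary.

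\textbf{How you select.} You use the fact that the equicontinuity in Lemma~\ref{lem:value-continuity} is uniform over all controls of finite cost at a point of $K$. One mesh size therefore serves every center, which gives a finite partition of a compact $K$. The price is a separate estimate for the event $\{X_\theta^{\pi^0}\notin K\}$, using uniform localization, fourth moments, and the quadratic bound on the uniform-control cost. Your route gives an explicit error that is uniform in $\pi^0$. In the exit-time case the tail term disappears altogether, since $X_\theta\in\overline D$ and you may take $K=\overline D$. The paper's route is shorter and needs no moment bounds.

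\textbf{Two points of phrasing to fix.}

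First, the order of choices. The mesh must be fixed before the controls $\pi^k$ are chosen. Otherwise refining the partition changes the centers and hence the controls, which is circular. This ordering is legitimate precisely because of the uniformity of the equicontinuity over controls, so state it in that order. The centers $Y_k$ should also lie in $K$, so that each $\pi^k$ has finite cost at a point of $K$.

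Second, infinite-cost controls. A control with $J_\lambda(X;\pi)=+\infty$ need not have $\Gamma(\pi,\theta)=+\infty$, because its entropy may blow up only after $\theta$. Your remark that such controls ``contribute $+\infty$ on both sides'' is therefore false for the right-hand side. The upper bound must be proved for every $\pi^0$ with $\Gamma(\pi^0,\theta)<\infty$, which is how the paper phrases it. Your pasting uses $\pi^0$ only before $\theta$, so the argument goes through verbatim without assuming $J_\lambda(X;\pi^0)<\infty$.
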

\begin{proof}
We prove both identities simultaneously.  For a fixed admissible control $\pi$, let $\theta^\pi$
denote either the prescribed stopping time $\tau$ or the exit time $\tau_{D,T}^\pi$.  Since
$X^\pi$ is continuous and raw adapted, $\tau_{D,T}^\pi$ is a bounded raw stopping time.  Indeed,
for $t<T$,
\[
 \{\tau_{D,T}^\pi\le t\}
 =\left\{
 \inf_{q\in\mathbb Q\cap[0,t]}
 \operatorname{dist}(X_q^\pi,D^c)=0
 \right\}\in\Fc_t^0,
\]
while $\{\tau_{D,T}^\pi\le t\}=\Omega_W$ for $t\ge T$.
Let first $\pi$ have finite cost.  Splitting its nonnegative running cost at $\theta^\pi$ and using
\eqref{eq:conditional-tail-cost-control} gives
\begin{align}
 J_\lambda(X;\pi)
 =\E_X\Bigg[&\int_0^{\theta^\pi}e^{-\vartheta s}
 \ell_\lambda(X_s^\pi,\pi_s)\dd s\notag\\
 &+e^{-\vartheta\theta^\pi}
 J_\lambda\left(X_{\theta^\pi}^\pi;
 \pi^{\theta^\pi,\omega}\right)\Bigg].
 \label{eq:cost-split-DPP}
\end{align}
For almost every past path, $\pi^{\theta^\pi,\omega}$ is admissible.  Hence the last cost is at
least $v_\lambda(X_{\theta^\pi}^\pi)$.  Taking the infimum over $\pi$ proves the inequality
``$\ge$'' in \eqref{eq:DPP-HJB-detailed} and \eqref{eq:DPP-stopping-control}.
For the reverse inequality, fix $\varepsilon>0$.  For each $Y\in\hX$, choose an admissible control
$\pi^Y$ such that
\begin{equation}\label{eq:epsilon-optimal-continuations-DPP}
 J_\lambda(Y;\pi^Y)\le v_\lambda(Y)+\varepsilon.
\end{equation}
Choose $r_Y>0$ with $\overline B_{2r_Y}(Y)\Subset\hX$.  Lemma~\ref{lem:value-continuity}, applied
on this compact set, shows that both $v_\lambda$ and
$J_\lambda(\,\cdot\,;\pi^Y)$ are continuous there.  After decreasing $r_Y$ if necessary,
\begin{equation}\label{eq:local-selector-DPP}
 J_\lambda(Z;\pi^Y)\le v_\lambda(Z)+3\varepsilon,
 \qquad Z\in O_Y:=B_{r_Y}(Y).
\end{equation}
The open set $\hX\subset\R^m$ is second countable, so $(O_Y)_{Y\in\hX}$ has a countable subcover
$(O_k)_{k\ge1}$ with corresponding controls $(\pi^k)_{k\ge1}$.  Define the Borel partition
\begin{equation}\label{eq:Borel-partition-DPP}
 A_1:=O_1,
 \qquad
 A_k:=O_k\setminus\bigcup_{\ell<k}O_\ell,
 \quad k\ge2.
\end{equation}
Then \eqref{eq:local-selector-DPP} holds with $\pi^k$ for every $Z\in A_k$.
Fix now a prefix control $\pi^0$ for which the expression on the right-hand side of the relevant
DPP formula is finite, and put $\theta:=\theta^{\pi^0}$.  The events
\[
 B_k:=\{X_\theta^{\pi^0}\in A_k\}
\]
form an $\Fc_\theta^0$-measurable partition.  Paste $\pi^0$ with the continuation controls
$\pi^k$ according to \eqref{eq:concatenated-control-definition}, and call the resulting admissible
control $\bar\pi$.  In the exit-time case, the pasted control agrees with $\pi^0$ until $\theta$;
therefore
\[
 \tau_{D,T}^{\bar\pi}=\tau_{D,T}^{\pi^0}=\theta.
\]
Using \eqref{eq:conditional-pasted-cost-control}, \eqref{eq:local-selector-DPP}, and the tower
property, we obtain
\[
 J_\lambda(X;\bar\pi)
 \le\E_X\left[
 \int_0^\theta e^{-\vartheta s}
 \ell_\lambda(X_s^{\pi^0},\pi_s^0)\dd s
 +e^{-\vartheta\theta}
 \bigl(v_\lambda(X_\theta^{\pi^0})+3\varepsilon\bigr)
 \right].
\]
Since $e^{-\vartheta\theta}\le1$, taking the infimum over $\pi^0$ and then letting
$\varepsilon\downarrow0$ proves the reverse inequality in both DPP formulas.
\end{proof}
\subsection{The exploratory Hamilton--Jacobi--Bellman equation}\label{subsec:HJB}
The variational identity \eqref{eq:gibbs-variational} shows that the dynamic programming equation
associated with \eqref{eq:control-cost} is
\begin{equation}\label{eq:exploratory-HJB}
 -\vartheta v_\lambda(X)
 -\nabla E(X)\cdot\nabla v_\lambda(X)
 +E(X)
 +\mathscr H_\lambda\bigl(\Delta v_\lambda(X)\bigr)=0,
 \qquad X\in\hX.
\end{equation}
We first isolate the two regularity steps needed after the viscosity characterization.
\begin{lemma}\label{lem:HJB-interior-regularity}
Let $D'\Subset D\Subset\hX$, and let $u$ be a bounded viscosity solution on $D$ of
\[
 -\vartheta u-\nabla E\cdot\nabla u+E+\mathscr H_\lambda(\Delta u)=0.
\]
There exists
$\bar\beta=\bar\beta(m,a)\in(0,1)$ such that, for every
$0<\beta<\bar\beta$,
\[
 u\in C^{2,\beta}(D').
\]
The corresponding interior norm is bounded in terms of
\[
 \operatorname{dist}(D',\partial D),\quad m,\quad a,\quad
 \vartheta,\quad\lambda,\quad
 \|E\|_{C^2(D)},\quad\sup_D|u|.
\]
\end{lemma}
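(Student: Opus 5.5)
The equation is a fully nonlinear, uniformly elliptic, concave equation of the form
\[
 F(D^2u,\nabla u,u,X):=\mathscr H_\lambda(\tr D^2u)-\nabla E(X)\cdot\nabla u-\vartheta u+E(X)=0 .
\]
Our plan is to invoke the Evans--Krylov / Caffarelli interior $C^{2,\beta}$ theory for viscosity solutions of concave equations. The structural inputs come from \eqref{eq:H-uniform-ellipticity}. The map $M\mapsto\mathscr H_\lambda(\tr M)$ is uniformly elliptic with constants $a$ and $1$: for $N\ge0$,
\[
 a\tr N\le \mathscr H_\lambda(\tr(M+N))-\mathscr H_\lambda(\tr M)\le \tr N .
\]
It is also concave, since $\mathscr H_\lambda$ is concave and the trace is linear. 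The lower-order coefficients are controlled on $D$ by Theorem~\ref{thm:C2}, since $D\Subset\hX$. Indeed, $\nabla E$ is $C^1$, hence Lipschitz, on $\overline D$, and $E$ is $C^2$ there, so both are bounded and H\"older continuous on $D$ with bounds in terms of $\|E\|_{C^2(D)}$. Hence the equation satisfies the hypotheses of Caffarelli's perturbation theorem in the form of Caffarelli (Ann. Math. 1989) and Caffarelli--Cabr\'e, Chapter 8, for concave $F$ with H\"older dependence on $X$. The exponent $\bar\beta(m,a)$ is the Evans--Krylov exponent of the frozen constant-coefficient concave operator. It depends only on the dimension and the ellipticity ratio, and every $\beta<\bar\beta$ is admissible.

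\textbf{Step 1: $C^{1,\alpha}$ regularity.} We treat the first-order and zeroth-order terms as a right-hand side. Write $f:=\nabla E\cdot\nabla u+\vartheta u-E$. A priori $\nabla u$ is unknown, so we first use the Krylov--Safonov/Caffarelli $C^{1,\alpha}$ estimate for viscosity solutions of uniformly elliptic equations with bounded measurable lower-order terms of linear growth in the gradient (\'Swi\k{e}ch's $W^{2,p}$/$C^{1,\alpha}$ results). This yields $u\in C^{1,\alpha}$ on an intermediate domain $D''$ with $D'\Subset D''\Subset D$. The bound depends on $\sup_D|u|$, $\|\nabla E\|_{L^\infty(D)}$, $\vartheta$, $\|E\|_{L^\infty(D)}$, $m$, $a$ and the distance.

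\textbf{Step 2: freeze the gradient.} With $\nabla u$ now known, $f$ becomes a known $C^{\alpha'}$ function on $D''$, with $\alpha'=\min(\alpha,1)$. Here we use that $\nabla E\in C^{0,1}$, $u\in C^{1,\alpha}$ and $E\in C^1$. The function $u$ is then a viscosity solution of
\[
 \mathscr H_\lambda(\Delta u)=f \quad\Longleftrightarrow\quad \Delta u=\mathscr H_\lambda^{-1}(f).
\]
This uses the increasing diffeomorphism \eqref{eq:H-diffeomorphism}: since $\mathscr H_\lambda$ is strictly increasing, viscosity sub- and supersolution tests transfer directly. So $u$ is a viscosity, hence distributional, solution of Poisson's equation with a $C^{\alpha'}$ right-hand side, because $\mathscr H_\lambda^{-1}$ is Lipschitz. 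Classical Schauder theory then gives $u\in C^{2,\alpha'}(D')$, with the norm controlled by the listed quantities.

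One could even bypass Evans--Krylov entirely: the operator depends on $D^2u$ only through the Laplacian, so the problem reduces to linear Schauder theory once the gradient is known.

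\textbf{Step 3: bootstrap to every $\beta<\bar\beta$.} Now $u\in C^{2,\alpha'}$, so $\nabla u\in C^{1,\alpha'}$ and $f\in C^{0,1}$ locally. Schauder theory then gives $C^{2,\beta}$ for every $\beta<1$, which is stronger than needed. We take $\bar\beta$ to be, for instance, the Krylov--Safonov exponent, and intermediate shrinking domains keep the norm bound in terms of $\operatorname{dist}(D',\partial D)$.

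\textbf{Main obstacle.} We expect the main obstacle to be Step 1: obtaining a rigorous $C^{1,\alpha}$ estimate for a merely bounded viscosity solution when the gradient term is present. We would first try \'Swi\k{e}ch's theorem, \emph{$W^{1,p}$-interior estimates for solutions of fully nonlinear uniformly elliptic equations}, which applies to $F(D^2u)+b\cdot\nabla u+cu=g$ with bounded $b,c,g$. Its hypotheses are satisfied here because $F=\mathscr H_\lambda\circ\tr$ is convex-or-concave, so the $C^{1,\alpha}$ regularity of the homogeneous frozen problem holds.
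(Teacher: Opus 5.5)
Your argument is correct, but it takes a different route from the paper.

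\textbf{The paper's route.} The paper does not split the proof into a $C^{1,\alpha}$ step plus Schauder. It rewrites the equation for $w=-u$ in the normalized form $\widehat\Fc_\lambda(D^2w,\nabla w,w,X)=E+\mathscr H_\lambda(0)$. It then checks four hypotheses:
\begin{itemize}
\item uniform ellipticity with constants $(a,1)$;
\item convexity in the Hessian;
\item Lipschitz dependence on the lower-order arguments;
\item the coefficient-oscillation bound $\|D^2E\|_{L^\infty(D)}|X-X_0|\,|p|$.
\end{itemize}
It applies one general pointwise $C^{2,\beta}$ theorem (Lian--Wang--Zhang, Theorem~5.1) on rescaled balls and finishes with a covering. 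The restriction $\beta<\bar\beta(m,a)$ is the Evans--Krylov-type exponent of that black-box theorem. This route never uses the special form of the operator.

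\textbf{Your route.} You exploit the trace structure. Because $\mathscr H_\lambda$ is an increasing diffeomorphism, the equation is equivalent in the viscosity sense to the semilinear equation $\Delta u=\mathscr H_\lambda^{-1}(\vartheta u+\nabla E\cdot\nabla u-E)$. After the $C^{1,\alpha}$ step, only linear potential theory is needed. This is essentially the paper's later inversion trick from Lemma~\ref{lem:laplacian-C1}, moved into the viscosity setting before second derivatives are known.

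\textbf{What each buys.}
\begin{itemize}
\item Your route gives a stronger conclusion: $C^{2,\beta}$ for every $\beta<1$, so $\bar\beta$ can be chosen arbitrarily. It also makes concavity and Evans--Krylov unnecessary.
\item Its cost is reliance on \'Swi\k{e}ch's $C^{1,\alpha}$ theory for viscosity solutions with gradient terms. What that theorem needs is $C^{1,\bar\alpha}$ control of the frozen constant-coefficient equation. Here that equation is just $\Delta u=\text{const}$, so convexity plays no role, contrary to your justification in the last paragraph.
\item The paper's route is a short verification of hypotheses, with all the depth hidden in the cited theorem.
\end{itemize}

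\textbf{Two points to state explicitly.}
\begin{itemize}
\item Freezing the gradient is legitimate because, when $u\in C^1$ and a $C^2$ test function $\varphi$ touches $u$ at $X_0$, one has $\nabla\varphi(X_0)=\nabla u(X_0)$. Without this, the viscosity inequalities for the original and frozen equations are not the same.
\item The passage from a continuous viscosity solution of $\Delta u=g$, with $g$ H\"older, to a classical one can be done directly. Subtract a $C^{2,\alpha}$ Newtonian potential $w$ with $\Delta w=g$ on a ball. Then $u-w$ is viscosity harmonic, hence harmonic by comparison with Poisson integrals. Citing ``viscosity implies distributional'' alone needs a reference.
\end{itemize}
With these additions, the norm dependence you track through $D'\Subset D''\Subset D$ matches the list in the statement.
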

\begin{proof}
Put $w:=-u$.  After adding the constant $\mathscr H_\lambda(0)$, the equation for $w$ can be
written as
\begin{equation}\label{eq:convex-HJB-for-w}
 \widehat\Fc_\lambda(D^2w,\nabla w,w,X)
 =E(X)+\mathscr H_\lambda(0),
\end{equation}
where
\begin{equation}\label{eq:normalized-HJB-operator-main}
 \widehat\Fc_\lambda(A,p,r,X)
 :=-\mathscr H_\lambda(-\tr A)+\mathscr H_\lambda(0)
 -\nabla E(X)\cdot p-\vartheta r.
\end{equation}
Then $\widehat\Fc_\lambda(0,0,0,X)=0$.  If $B$ is nonnegative and symmetric,
\begin{equation}\label{eq:proper-uniform-ellipticity-HJB}
 a\tr B
 \le
 \widehat\Fc_\lambda(A+B,p,r,X)-\widehat\Fc_\lambda(A,p,r,X)
 \le\tr B
\end{equation}
by \eqref{eq:H-uniform-ellipticity}.  Moreover,
$A\mapsto\widehat\Fc_\lambda(A,p,r,X)$ is convex because
$s\mapsto-\mathscr H_\lambda(-s)$ is convex.  On $D$,
\begin{align}
 &|\widehat\Fc_\lambda(A,p,r,X)-
 \widehat\Fc_\lambda(A,p',r',X)|\notag\\
 &\qquad\le \|\nabla E\|_{L^\infty(D)}|p-p'|
 +\vartheta|r-r'|.
 \label{eq:lower-order-HJB-structure}
\end{align}
For $X_0\in D$, freezing the spatial coefficient gives
\begin{align}
 &|\widehat\Fc_\lambda(A,p,r,X)-
 \widehat\Fc_\lambda(A,p,r,X_0)|\notag\\
 &\qquad\le \|D^2E\|_{L^\infty(D)}|X-X_0|\,|p|.
 \label{eq:coefficient-oscillation-HJB-main}
\end{align}
Thus the special structure condition and the coefficient-oscillation hypothesis of
\cite[Theorem~5.1]{LianWangZhang2026} hold, with ellipticity constants $(a,1)$ and a Lipschitz
oscillation modulus.  The right-hand side of \eqref{eq:convex-HJB-for-w} is locally Lipschitz and
therefore locally $C^{0,\beta}$ for every $\beta<1$.  Since the coefficients are continuous, the
standard and $L^m$ viscosity notions agree; see
\cite[Remark~2.9]{LianWangZhang2026}.  Translating and rescaling balls compactly contained in $D$,
applying \cite[Theorem~5.1]{LianWangZhang2026}, and using the equivalence between pointwise and
local H\"older regularity stated in \cite[Remark~2.3]{LianWangZhang2026} give the asserted estimate
after a finite covering of $D'$.  The argument is entirely interior and uses no boundary value on
$\Coll$.
\end{proof}
\begin{lemma}\label{lem:laplacian-C1}
Let $u\in C_{\mathrm{loc}}^{2,\beta}(\hX)$ be a viscosity solution of
\[
 -\vartheta u-\nabla E\cdot\nabla u+E+\mathscr H_\lambda(\Delta u)=0.
\]
Then the equation holds pointwise and
\begin{equation}\label{eq:laplacian-C1-lemma}
 \Delta u\in C_{\mathrm{loc}}^1(\hX).
\end{equation}
\end{lemma}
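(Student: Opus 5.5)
The argument has two steps.

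\emph{Step 1: the equation holds pointwise.} Since $u\in C^{2}$ near a point $X_0\in\hX$, the function $u$ itself can be used as a test function at $X_0$. More precisely, take the quadratic Taylor polynomial
\[
 \varphi_\pm(X)=u(X_0)+\nabla u(X_0)\cdot(X-X_0)+\tfrac12(X-X_0)\cdot D^2u(X_0)(X-X_0)\pm\varepsilon|X-X_0|^2 .
\]
By the $C^2$ Taylor expansion, $\varphi_+$ touches $u$ from above at $X_0$ and $\varphi_-$ touches from below. The viscosity sub- and supersolution inequalities at $X_0$ then give
\[
 -\vartheta u(X_0)-\nabla E(X_0)\cdot\nabla u(X_0)+E(X_0)+\mathscr H_\lambda(\Delta u(X_0)\pm 2m\varepsilon)\lessgtr 0 ,
\]
with the inequality directions fixed by the paper's sign convention. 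Let $\varepsilon\downarrow0$ and use continuity of $\mathscr H_\lambda$. The two inequalities combine into equality, so the equation holds at every point of $\hX$.

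\emph{Step 2: invert the Hamiltonian.} By \eqref{eq:H-diffeomorphism}, $\mathscr H_\lambda$ is a $C^\infty$ increasing diffeomorphism of $\R$, and its inverse is smooth and globally $a^{-1}$-Lipschitz. Solving the pointwise identity algebraically gives
\[
 \Delta u=\mathscr H_\lambda^{-1}\bigl(\vartheta u+\nabla E\cdot\nabla u-E\bigr)\qquad\text{on }\hX.
\]
It remains to check that the argument $G:=\vartheta u+\nabla E\cdot\nabla u-E$ is $C^1_{\mathrm{loc}}(\hX)$:
\begin{itemize}
\item $u\in C^{2,\beta}_{\mathrm{loc}}\subset C^1$.
\item $\nabla u\in C^{1,\beta}_{\mathrm{loc}}\subset C^1$.
\item $E\in C^2(\hX)$ and $\nabla E\in C^1(\hX)$ by Theorem~\ref{thm:C2}.
\end{itemize}
Hence the product $\nabla E\cdot\nabla u$ is $C^1$, and so $G\in C^1_{\mathrm{loc}}(\hX)$. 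Its derivative is
\[
 DG=\vartheta\nabla u+D^2E\,\nabla u+D^2u\,\nabla E-\nabla E .
\]
The chain rule with the smooth map $\mathscr H_\lambda^{-1}$ then gives $\Delta u\in C^1_{\mathrm{loc}}(\hX)$, with
\[
 D(\Delta u)=(\mathscr H_\lambda^{-1})'(G)\,DG,\qquad 0<(\mathscr H_\lambda^{-1})'\le a^{-1}.
\]
On compact sets $K\Subset\hX$, the derivative is therefore bounded by $a^{-1}\|DG\|_{L^\infty(K)}$, and that bound is controlled by the $C^2$ norms of $u$ and $E$ on $K$ via the formula for $DG$.

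\emph{Main obstacle.} There is no serious one. The only point needing care is that the gain of one derivative uses only $C^2$ regularity of $E$ (a $C^1$ drift $\nabla E$) and $C^2$ regularity of $u$. This is exactly what is available, and no $C^{2,\alpha}$ regularity of $E$ is needed (cf. Remark~\ref{rem:known-C2}). The extra derivative comes from the equation itself and is not a consequence of $u\in C^{2,\beta}$ alone. I would also state explicitly that Step 1 uses the standard fact that a classical $C^2$ function which is a viscosity solution is a pointwise solution, which requires the viscosity test functions to include $C^2$ (indeed quadratic) ones.
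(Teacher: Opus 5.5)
Your proof is correct and follows the paper's route: a $C^2$ viscosity solution satisfies the equation pointwise, the identity is inverted as $\Delta u=\mathscr H_\lambda^{-1}\bigl(\vartheta u+\nabla E\cdot\nabla u-E\bigr)$ using the smooth inverse of $\mathscr H_\lambda$, and the argument lies in $C^1_{\mathrm{loc}}(\hX)$ because $E\in C^2(\hX)$ and $u\in C^{2,\beta}_{\mathrm{loc}}(\hX)$. Your Step 1 simply spells out, via the $\pm\varepsilon|X-X_0|^2$ Taylor test functions, the standard fact that the paper asserts in a single sentence.
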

\begin{proof}
Since $u\in C_{\mathrm{loc}}^2(\hX)$ and the HJB operator is continuous, the viscosity equation is
a classical pointwise identity.  Since $\mathscr H_\lambda$ is a smooth diffeomorphism,
\begin{equation}\label{eq:laplacian-inversion-detailed}
 \Delta u
 =\mathscr H_\lambda^{-1}
 \left(\vartheta u+\nabla E\cdot\nabla u-E\right).
\end{equation}
Now $E\in C^2(\hX)$ by Theorem~\ref{thm:C2}, while
$u\in C_{\mathrm{loc}}^{2,\beta}(\hX)$.  Therefore the expression in parentheses belongs to
$C_{\mathrm{loc}}^1(\hX)$.  Composition with the smooth map
$\mathscr H_\lambda^{-1}$ proves \eqref{eq:laplacian-C1-lemma}.  Notice that this additional
derivative is obtained from the HJB identity; it is not a consequence of
$C^{2,\beta}$ regularity alone.
\end{proof}
\begin{proof}[Proof of Theorem~\ref{thm:HJB}]
We indicate explicitly how the preceding auxiliary results enter the proof.

\medskip
\noindent\textit{Step 1: finiteness, growth, and continuity.}
These conclusions are exactly Lemma~\ref{lem:value-continuity}.  The proof of that lemma uses the
uniform localization result of Lemma~\ref{lem:uniform-localization-control}, but not the dynamic
programming principle.

\medskip
\noindent\textit{Step 2: viscosity solution property.}
Define the proper operator
\begin{equation}\label{eq:proper-HJB-operator}
 \Fc_\lambda(X,r,p,A)
 :=\vartheta r+\nabla E(X)\cdot p-E(X)
 -\mathscr H_\lambda(\tr A).
\end{equation}
Equation \eqref{eq:exploratory-HJB} is equivalent to
$\Fc_\lambda(X,v,\nabla v,D^2v)=0$.
Fix $X_0\in\hX$ and choose $r>0$ such that
$\overline B_{2r}(X_0)\Subset\hX$.  For a control $\pi$ and $h>0$, set
\begin{equation}\label{eq:viscosity-exit-time-control}
 \tau_h^\pi
 :=h\wedge\inf\{t\ge0:X_t^\pi\notin B_r(X_0)\}.
\end{equation}
On $B_r(X_0)$ the drift is bounded, uniformly over controls, and the diffusion coefficient is
bounded by $\sqrt2$.  The stopped integral equation and the Burkholder--Davis--Gundy inequality
therefore give
\begin{equation}\label{eq:short-time-state-estimate}
 \sup_\pi\E_{X_0}\left[
 \sup_{0\le s\le\tau_h^\pi}|X_s^\pi-X_0|^2\right]
 \le C_r(h+h^2).
\end{equation}
Indeed, the drift contribution is bounded by $C_rh^2$, while the quadratic expectation of the
stopped stochastic integral is bounded by $2mh$.  In particular,
\begin{equation}\label{eq:short-time-exit-probability-control}
 \sup_\pi\P_{X_0}(\tau_h^\pi<h)
 \le r^{-2}C_r(h+h^2)\longrightarrow0.
\end{equation}
Consequently, $\tau_h^\pi/h\to1$ in probability uniformly over controls.  If $G$ is continuous on
$\overline B_r(X_0)$, uniform continuity of $G$, \eqref{eq:short-time-state-estimate}, and
\eqref{eq:short-time-exit-probability-control} yield
\begin{equation}\label{eq:short-time-average-control}
 \frac1h\E_{X_0}\int_0^{\tau_h^\pi}e^{-\vartheta s}G(X_s^\pi)\dd s
 \longrightarrow G(X_0),
\end{equation}
uniformly over controls whenever $G$ itself does not depend on the chosen control.
Suppose first that $v_\lambda-\varphi$ has a local maximum zero at $X_0$, with
$\varphi\in C^2(B_{2r}(X_0))$.  After decreasing $r$ if necessary,
$v_\lambda\le\varphi$ on $\overline B_r(X_0)$.  Fix a constant action
$\varpi\in\mathfrak U$ with finite entropy and apply the stopping-time DPP
\eqref{eq:DPP-stopping-control} with $D=B_r(X_0)$ and horizon $h$.  Since the value is the infimum,
using this particular constant control gives
\[
 0\le\E_{X_0}\left[
 \int_0^{\tau_h^\varpi}e^{-\vartheta s}
 \ell_\lambda(X_s^\varpi,\varpi)\dd s
 +e^{-\vartheta\tau_h^\varpi}\varphi(X_{\tau_h^\varpi}^\varpi)
 -\varphi(X_0)\right].
\]
It\^o's formula for $e^{-\vartheta t}\varphi(X_t^\varpi)$ up to
$\tau_h^\varpi$ turns this into
\begin{align*}
 0\le\E_{X_0}\int_0^{\tau_h^\varpi}e^{-\vartheta s}
 \Bigl[&E(X_s^\varpi)+\lambda\Ent_{[a,1]}(\varpi)
 -\vartheta\varphi(X_s^\varpi)\\
 &-\nabla E(X_s^\varpi)\cdot\nabla\varphi(X_s^\varpi)
 +\bar u(\varpi)\Delta\varphi(X_s^\varpi)\Bigr]\dd s.
\end{align*}
Divide by $h$ and let $h\downarrow0$ using
\eqref{eq:short-time-average-control}.  Since $\varpi$ is arbitrary,
\eqref{eq:gibbs-variational} gives
\[
 E(X_0)-\vartheta\varphi(X_0)
 -\nabla E(X_0)\cdot\nabla\varphi(X_0)
 +\mathscr H_\lambda(\Delta\varphi(X_0))\ge0,
\]
which is equivalent to
\[
 \Fc_\lambda(X_0,\varphi(X_0),\nabla\varphi(X_0),D^2\varphi(X_0))\le0.
\]
Thus $v_\lambda$ is a viscosity subsolution.
Suppose now that $v_\lambda-\varphi$ has a local minimum zero at $X_0$.  Then
$v_\lambda\ge\varphi$ on $\overline B_r(X_0)$.  In the stopping-time DPP choose a control
$\pi^h$ whose displayed value is within $\varepsilon_h:=h^2$ of the infimum.  The terminal
comparison with $\varphi$ and It\^o's formula give
\begin{align*}
 h^2\ge\E_{X_0}\int_0^{\tau_h^{\pi^h}}e^{-\vartheta s}
 \Bigl[&E(X_s^{\pi^h})+\lambda\Ent_{[a,1]}(\pi_s^h)
 -\vartheta\varphi(X_s^{\pi^h})\\
 &-\nabla E(X_s^{\pi^h})\cdot\nabla\varphi(X_s^{\pi^h})
 +\bar u(\pi_s^h)\Delta\varphi(X_s^{\pi^h})\Bigr]\dd s.
\end{align*}
Pointwise in $(s,\omega)$, the Gibbs inequality yields
\[
 \bar u(\pi_s^h)\Delta\varphi(X_s^{\pi^h})
 +\lambda\Ent_{[a,1]}(\pi_s^h)
 \ge\mathscr H_\lambda(\Delta\varphi(X_s^{\pi^h})).
\]
The remaining integrand is now a continuous function of the state only.  Divide by $h$ and use
\eqref{eq:short-time-average-control}; since $h^2/h\to0$, we obtain
\[
 \Fc_\lambda(X_0,\varphi(X_0),\nabla\varphi(X_0),D^2\varphi(X_0))\ge0.
\]
This proves that $v_\lambda$ is a viscosity solution.  The role of
Proposition~\ref{prop:DPP} in the main theorem is precisely this step.

\medskip
\noindent\textit{Step 3: interior regularity and differentiability of the Laplacian.}
Lemma~\ref{lem:HJB-interior-regularity}, applied on arbitrary domains
$D'\Subset D\Subset\hX$, yields a number $\beta\in(0,1)$ such that
\[
 v_\lambda\in C_{\mathrm{loc}}^{2,\beta}(\hX).
\]
This proves \eqref{eq:v-C2beta}.  Lemma~\ref{lem:laplacian-C1} then gives
\[
 q_\lambda:=\Delta v_\lambda\in C_{\mathrm{loc}}^1(\hX),
\]
which is \eqref{eq:q-C1}.  In particular, the HJB equation holds classically on $\hX$.

\medskip
\noindent\textit{Step 4: construction and admissibility of the feedback.}
For $X\in\hX$, the unique minimizer of the pointwise Hamiltonian is
\[
 \varpi_\lambda^*(X)(\dd u)
 :=\varpi_{q_\lambda(X)}^\lambda(\dd u)
 =\pi_\lambda^*(u;X)\dd u,
\]
where $\pi_\lambda^*$ is given by \eqref{eq:optimal-density}.  Equations
\eqref{eq:H-prime} and \eqref{eq:q-C1} imply
\[
 \tau_\lambda,h_\lambda\in C^1_{\mathrm{loc}}(\hX),
 \qquad
 \sqrt{2a}\le h_\lambda\le\sqrt2.
\]
Thus both coefficients of the feedback equation \eqref{eq:optimal-feedback-SDE} are locally
Lipschitz on $\hX$.  The Picard and exhaustion construction in Step~1 of
Proposition~\ref{prop:controlled-dynamics}, now applied to the two state-dependent coefficients,
produces a pathwise unique maximal strong solution which can be chosen continuous and adapted to
the raw canonical filtration.  The Lyapunov calculation of Step~2 there is unchanged because
$h_\lambda^2=2\tau_\lambda\le2$.  In the localized collision argument, Girsanov removes the drift
and each pairwise difference has quadratic clock
\[
 4\int_0^t\tau_\lambda(X_s^*)\dd s,
\]
which lies between $4at$ and $4t$.  The same polarity argument therefore excludes collisions, and
the feedback solution is global.  The same Lyapunov calculation also yields the estimate
\eqref{eq:moment-controlled} with $X^\pi$ replaced by $X^*$.
The map $X\mapsto\varpi_\lambda^*(X)$ is continuous from $\hX$ to $\mathfrak U$.  Since $X^*$ is
continuous and raw adapted, it is raw progressive; hence
\begin{equation}\label{eq:feedback-control-raw-progressive}
 \pi_t^*(\dd u)
 :=\varpi_\lambda^*(X_t^*)(\dd u)
 =\pi_\lambda^*(u;X_t^*)\dd u
\end{equation}
is raw progressively measurable and is admissible in the sense of
\eqref{eq:admissible-control-class}.

\medskip
\noindent\textit{Step 5: verification and finite feedback cost.}
Let $\pi$ be any finite-cost control and set
$q_t:=\Delta v_\lambda(X_t^\pi)$.  The Gibbs inequality gives
\begin{equation}\label{eq:gibbs-verification-detailed}
 \bar u(\pi_t)q_t+\lambda\Ent_{[a,1]}(\pi_t)
 \ge\mathscr H_\lambda(q_t),
\end{equation}
with equality if and only if
$\pi_t(\dd u)=\pi_\lambda^*(u;X_t^\pi)\dd u$.
Choose the exhaustion
$D_k=\{Y\in\hX:|Y|<k,\ \mathfrak d(Y)>k^{-1}\}$ and let
$\sigma_k$ be the first exit time of $X^\pi$ from $D_k$.  Since
$v_\lambda$, $\nabla v_\lambda$, and $D^2v_\lambda$ are bounded on
$\overline D_k$, It\^o's formula for
$e^{-\vartheta t}v_\lambda(X_t^\pi)$ up to $T\wedge\sigma_k$ has a martingale with zero expectation.
Using the classical HJB equation and \eqref{eq:gibbs-verification-detailed} yields
\begin{align}
 &\E_X\left[e^{-\vartheta(T\wedge\sigma_k)}
 v_\lambda(X_{T\wedge\sigma_k}^\pi)\right]-v_\lambda(X)\notag\\
 &\qquad\ge-\E_X\int_0^{T\wedge\sigma_k}e^{-\vartheta t}
 \ell_\lambda(X_t^\pi,\pi_t)\dd t.
 \label{eq:Ito-verification-detailed}
\end{align}
By Lemma~\ref{lem:uniform-localization-control},
$\sup_{s\le T}|X_s^\pi|^4$ is integrable.  Together with the quadratic growth
\eqref{eq:value-growth}, this makes
$v_\lambda(X_{T\wedge\sigma_k}^\pi)$ uniformly integrable in $k$.  Since
$\sigma_k\to\infty$ almost surely and the running cost is nonnegative, we may let $k\to\infty$ in
\eqref{eq:Ito-verification-detailed}.  Furthermore, \eqref{eq:value-growth} and
\eqref{eq:moment-controlled} give
\begin{equation}\label{eq:terminal-vanishes-verification}
 \E_X\left[e^{-\vartheta T}|v_\lambda(X_T^\pi)|\right]
 \le C e^{-\vartheta T}
 \left(1+e^{-\pmin T}|X|^2+\frac{m_2(\nu)+2m}{\pmin}\right)
 \longrightarrow0.
\end{equation}
Letting $T\to\infty$ proves
$v_\lambda(X)\le J_\lambda(X;\pi)$.  For an infinite-cost control this inequality is automatic.
For the feedback process $X^*$, equality holds in
\eqref{eq:gibbs-verification-detailed}.  The stopped identity becomes
\[
 \E_X\int_0^{T\wedge\sigma_k}e^{-\vartheta t}
 \ell_\lambda(X_t^*,\pi_t^*)\dd t
 =v_\lambda(X)-
 \E_X\left[e^{-\vartheta(T\wedge\sigma_k)}
 v_\lambda(X_{T\wedge\sigma_k}^*)\right]
 \le v_\lambda(X),
\]
because \eqref{eq:value-growth} implies
$v_\lambda\ge-\lambda\log(1-a)/\vartheta>0$.  Monotone convergence first in $k$ and then in $T$
shows that the feedback cost is finite.  Finally, letting $k,T\to\infty$ in the equality and using
\eqref{eq:terminal-vanishes-verification} gives
$J_\lambda(X;\pi^*)=v_\lambda(X)$.
\end{proof}
\begin{remark}\label{rem:regularity-feedback}
The conclusion $\Delta v_\lambda\in C_{\mathrm{loc}}^1$ is not inferred from
$v_\lambda\in C_{\mathrm{loc}}^{2,\beta}$.  The latter gives only
$\Delta v_\lambda\in C_{\mathrm{loc}}^{0,\beta}$.  The additional derivative is supplied by the
algebraic inversion of the trace-form HJB equation in Lemma~\ref{lem:laplacian-C1}.  Thus no full
$C^3$ estimate for the value function, and no unproved H\"older modulus for the moving-facet
second derivatives of $E$, is needed.
\end{remark}
\begin{remark}\label{rem:HJB-selection}
The theorem identifies the solution selected by the discounted stochastic control problem.  We do
not claim uniqueness among arbitrary classical solutions on the punctured state space $\hX$ without
a growth condition and without data on the polar collision set.  A numerical HJB solver should
therefore approximate the value-function solution, for instance through discounted finite-domain
problems with a consistent stochastic representation.
\end{remark}
\begin{remark}\label{rem:feedback-lipschitz}
Writing $\beta_\lambda(q):=\mathscr H_\lambda'(q)$, one has
\[
 \beta_\lambda'(q)
 =-\lambda^{-1}\operatorname{Var}_{\varpi_q^\lambda}(u),
 \qquad
 |\beta_\lambda'(q)|\le\frac{(1-a)^2}{4\lambda}.
\]
Since $\beta_\lambda\ge a$,
\begin{equation}\label{eq:h-feedback-Lipschitz-q}
 \left|\frac{\dd}{\dd q}\sqrt{2\beta_\lambda(q)}\right|
 \le\frac{(1-a)^2}{4\lambda\sqrt{2a}}.
\end{equation}
Thus an approximation error in $\Delta v_\lambda$ produces an explicitly controlled pointwise error
in the feedback temperature.
\end{remark}

\section{Langevin discretization and global record convergence}\label{sec:numerical}

This section has two logically distinct purposes.  First, when $d\ge2$, we record a
finite-horizon consistency result for the Euler discretization of the exact HJB feedback from
Theorem~\ref{thm:HJB}.  Second, and independently of that consistency result, we study the
long-time behavior of a homogeneous Euler chain driven by an arbitrary fixed Borel temperature
rule with values in $[a,1]$.  The latter argument uses only the global dissipativity of
$-\nabla E$ and the nondegeneracy of the Gaussian increments; in particular, it does not use
optimality of the HJB feedback and is valid in every dimension
\begin{equation}\label{eq:d-ge-1-discrete}
 d\ge1,
 \qquad
 m:=dN.
\end{equation}
The distinction between the two results is important: the HJB feedback is optimal for the
discounted criterion of Section~\ref{sec:control}, whereas running-record convergence holds for
every fixed Borel temperature rule bounded away from zero.

The feedback $\tau_\lambda$ is determined by an elliptic equation in state dimension $m=dN$.
Classical grid discretizations are therefore realistic only for small $m$; larger problems require
a parametric approximation of $\Delta v_\lambda$.  Recent numerical work uses physics-informed
neural networks tailored to the exploratory HJB structure
\cite{WangLiWangZhang2026}.  We do not analyze a particular HJB solver here.  Instead, the
asymptotic result below is formulated so that every fixed Borel approximation of the temperature,
once clipped to $[a,1]$, retains the record-convergence guarantee.  This robustness concerns the
temperature only and does not cover errors in the semi-discrete Wasserstein gradient.

\subsection{The Euler exploration algorithm}

Assume first that $d\ge2$, so that the exact feedback of Theorem~\ref{thm:HJB} is available.  Let
$\eta>0$ and let $\xi_1,\xi_2,\ldots$ be independent standard Gaussian vectors in $\R^m$.  The
Euler chain associated with the exact exploratory feedback is
\begin{equation}\label{eq:euler-exact}
 X_{n+1}
 =X_n-\eta\nabla E(X_n)
 +\sqrt{2\eta\tau_\lambda(X_n)}\,\xi_{n+1},
 \qquad X_0\in\hX.
\end{equation}
Here and below, the gradient is the full Euclidean gradient introduced in
\eqref{eq:full-gradient-definition-main}.  Its $i$th $\R^d$-valued component is
\begin{equation}\label{eq:gradient-component-algorithm}
 \nabla_{x_i}E(X_n)
 =p_i\bigl(x_{n,i}-c_i(X_n)\bigr).
\end{equation}
Consequently, the deterministic part of the update can be written componentwise as
\begin{equation}\label{eq:underrelaxed-Lloyd-step}
 x_{n+1,i}^{\mathrm{det}}
 =x_{n,i}-\eta p_i\bigl(x_{n,i}-c_i(X_n)\bigr)
 =(1-\eta p_i)x_{n,i}+\eta p_i c_i(X_n).
\end{equation}
Thus, whenever $0<\eta\le1$, the drift step is an under-relaxed weighted Lloyd step, followed by
Gaussian exploration.

Let $q:\hX\to\R$ be a fixed Borel approximation of $q_\lambda=\Delta v_\lambda$.  The induced
mean-temperature rule is
\begin{equation}\label{eq:approx-temperature-algorithm}
 \tau_q(X)
 :=\mathscr H_\lambda'(q(X))
 =\frac{\int_a^1u\exp[-uq(X)/\lambda]\dd u}
 {\int_a^1\exp[-uq(X)/\lambda]\dd u}
 \in(a,1),
 \qquad X\in\hX.
\end{equation}
For $q=q_\lambda$, this gives $\tau_q=\tau_\lambda$.

\begin{algorithm}[State-dependent Laguerre--Langevin exploration]\label{alg:exploration}
Fix $X_0\in\hX$, a step size $\eta>0$, and a fixed Borel function
$q:\hX\to\R$.  For $n=0,1,2,\ldots$:
\begin{enumerate}[label=\textup{\arabic*.},leftmargin=2.4em]
\item solve the normalized dual problem and obtain $\Phi^*(X_n)\in U$;
\item construct the balanced Laguerre cells and compute
\[
 c_i(X_n)=\frac1{p_i}\int_{V_i(X_n)}y\rho(y)\dd y,
 \qquad i=1,\ldots,N;
\]
\item set
\[
 g_{n,i}:=p_i\bigl(x_{n,i}-c_i(X_n)\bigr),
 \qquad
 g_n:=(g_{n,1},\ldots,g_{n,N})=\nabla E(X_n);
\]
\item compute the approximate mean temperature $\tau_q(X_n)$ from
\eqref{eq:approx-temperature-algorithm};
\item draw $\xi_{n+1}\sim\Nc(0,I_m)$ independently of the past and update
\begin{equation}\label{eq:algorithm-update}
 X_{n+1}=X_n-\eta g_n+\sqrt{2\eta\tau_q(X_n)}\,\xi_{n+1};
\end{equation}
\item retain the earliest best-so-far record
\begin{equation}\label{eq:record-definition}
 \Record_n:=X_{\kappa_n},
 \qquad
 \kappa_n:=\min\left\{0\le k\le n:
 E(X_k)=\min_{0\le\ell\le n}E(X_\ell)\right\}.
\end{equation}
\end{enumerate}
\end{algorithm}

The long-time result is more general than this HJB-based implementation.  From now on, let
$d\ge1$ and let
\begin{equation}\label{eq:generic-temperature}
 \tau:\hX\longrightarrow[a,1]
\end{equation}
be an arbitrary fixed Borel function.  Consider the homogeneous Markov chain
\begin{equation}\label{eq:euler-generic}
 X_{n+1}
 =X_n-\eta\nabla E(X_n)
 +\sqrt{2\eta\tau(X_n)}\,\xi_{n+1},
 \qquad X_0=X\in\hX.
\end{equation}
The word ``fixed'' means that the same state-dependent rule is used at every iteration; a rule
which changes with $n$ or depends on the entire past would produce a nonhomogeneous chain and is
not covered by Theorem~\ref{thm:numerical-convergence} without enlarging the state space.  When
$d\ge2$, the exact rule $\tau=\tau_\lambda$ and every fixed Borel approximation $\tau=\tau_q$ are
included.

The update in \eqref{eq:euler-generic} is well defined almost surely at every step in every
dimension $d\ge1$.  Indeed, conditionally on $X_n\in\hX$, the random vector $X_{n+1}$ has a
nondegenerate Gaussian distribution on the full ambient space $\R^m$.  The collision set $\Coll$ is
a finite union of affine subspaces of codimension $d\ge1$ and hence has $m$-dimensional Lebesgue
measure zero.  Therefore
\[
 \P(X_{n+1}\in\Coll\mid X_n)=0,
\]
and induction gives $X_n\in\hX$ for every $n$ almost surely whenever $X_0\in\hX$.  Notice that this
argument does not use polarity or path-connectedness: in dimension one the Gaussian chain may jump
across a collision hyperplane without landing on it.

\begin{remark}\label{rem:exact-gradient}
The convergence theorem below uses the exact drift $-\nabla E$.  In practice, both the balancing
weights and the cell moments are computed numerically.  A perturbed-gradient theorem would require
quantitative assumptions ensuring that the numerical drift preserves the global dissipativity and
the recurrence estimates used below.  Such a conclusion does not follow merely by clipping the
temperature and lies outside the present analysis.
\end{remark}

\subsection{Finite-horizon consistency of the exact-feedback scheme}

Throughout this subsection only, assume $d\ge2$.  Let $X^*$ denote the feedback diffusion
constructed in Theorem~\ref{thm:HJB}.  On the same Brownian space, put $t_n=n\eta$ and
\[
 \kappa_\eta(t):=t_n,
 \qquad t\in[t_n,t_{n+1}),
\]
and define the continuous Euler interpolation by
\begin{equation}\label{eq:continuous-Euler-interpolation}
 \overline X_t^\eta
 :=X_0-\int_0^t\nabla E\bigl(\overline X_{\kappa_\eta(s)}^\eta\bigr)\dd s
 +\int_0^t\sqrt{2\tau_\lambda
 \bigl(\overline X_{\kappa_\eta(s)}^\eta\bigr)}\dd W_s.
\end{equation}
At the grid times $t_n$, this interpolation has the same law as the chain
\eqref{eq:euler-exact}.

\begin{proposition}\label{prop:euler-finite-horizon}
Let $T>0$.  Then
\begin{equation}\label{eq:euler-convergence-prob}
 \sup_{0\le t\le T}|\overline X_t^\eta-X_t^*|
 \longrightarrow0
 \qquad\text{in probability as }\eta\downarrow0.
\end{equation}
More precisely, for every compact set $K_0\Subset\hX$ and every $\varepsilon_0>0$,
\begin{equation}\label{eq:euler-convergence-locally-uniform}
 \lim_{\eta\downarrow0}\sup_{X_0\in K_0}
 \P_{X_0}\left(
 \sup_{0\le t\le T}|\overline X_t^\eta-X_t^*|>\varepsilon_0
 \right)=0.
\end{equation}
\end{proposition}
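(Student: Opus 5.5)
The proof is a localization argument: the coefficients are only locally Lipschitz on $\hX$, so we compare the two processes on a compact safe domain and control the exit probability with the uniform estimates of Lemma~\ref{lem:uniform-localization-control}, applied to the exact feedback diffusion. Fix $K_0\Subset\hX$, $T>0$ and $\varepsilon_0>0$, and set
\[
 D_{R,\delta}:=\{Y\in\hX:|Y|<R,\ \mathfrak d(Y)>\delta\}.
\]
The feedback $X^*$ is the state process of the admissible relaxed control $\pi^*$. Hence Lemma~\ref{lem:uniform-localization-control} gives, uniformly over $X_0\in K_0$, that $\P(X^*\text{ leaves }D_{R,2\delta}\text{ before }T)\le\varepsilon$ once $R$ is large and $\delta$ is small. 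On $\overline D_{R,\delta}$ the drift $b=-\nabla E$ is Lipschitz by Theorem~\ref{thm:C2}, and the diffusion coefficient $h_\lambda=\sqrt{2\tau_\lambda}$ is Lipschitz because $h_\lambda\in C^1_{\mathrm{loc}}(\hX)$ by Theorem~\ref{thm:HJB}. Call the common constant $L$.

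Next we introduce stopping times. Let $\sigma^*$ be the exit time of $X^*$ from $D_{R,2\delta}$, and let $\bar\sigma$ be the first time at which $\overline X^\eta$ or $\overline X^\eta_{\kappa_\eta(\cdot)}$ leaves $D_{R,\delta}$. Put $\sigma:=\sigma^*\wedge\bar\sigma\wedge T$. Up to $\sigma$ all arguments lie in $\overline D_{R,\delta}$, so we may replace $b$ and $h_\lambda$ by globally Lipschitz bounded extensions $b_n,h_n$ without changing either equation. The error $e_t:=\overline X^\eta_t-X^*_t$ then satisfies
\[
 e_{t\wedge\sigma}=\int_0^{t\wedge\sigma}\bigl[b(\overline X^\eta_{\kappa_\eta(s)})-b(X^*_s)\bigr]\dd s+\int_0^{t\wedge\sigma}\bigl[h_\lambda(\overline X^\eta_{\kappa_\eta(s)})-h_\lambda(X^*_s)\bigr]\dd W_s.
\]
We split each integrand at $\overline X^\eta_s$ and apply Doob/BDG together with Gronwall. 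This gives
\[
 \E\sup_{t\le\sigma}|e_t|^2\le C_{R,\delta,T}\,\E\int_0^{\sigma}|\overline X^\eta_s-\overline X^\eta_{\kappa_\eta(s)}|^2\dd s\le C'_{R,\delta,T}\,\eta.
\]
The last bound holds because, before $\bar\sigma$, the increments over one step are controlled by the bounded drift, which gives a term of order $\eta^2$, and by the bounded diffusion, which gives a term of order $\eta$. All constants depend only on $R,\delta,T$, hence are uniform in $X_0\in K_0$.

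The main obstacle is removing the localization for the Euler process: $\bar\sigma<T$ could occur while $X^*$ stays safe. We handle it by a bootstrap. On $\{\sigma^*>T\}\cap\{\sup_{t\le\sigma}|e_t|<\delta/2\}$ we have $\overline X^\eta_\sigma\in D_{R+\delta/2,3\delta/2}$ at time $\sigma$, using the Lipschitz continuity of $\mathfrak d$. For small $\eta$, the frozen point $\overline X^\eta_{\kappa_\eta}$ also stays within $\delta/2$ of it, except on an event of probability $O(\eta)$; this follows from a union bound over the $T/\eta$ Gaussian one-step increments with Gaussian tails, using a modulus of continuity of order $\sqrt{\eta\log(1/\eta)}$. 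Therefore, on this event, $\bar\sigma>T$ unless the path sits exactly on the boundary, which is excluded by continuity. Consequently
\[
 \P\Bigl(\sup_{t\le T}|e_t|>\varepsilon_0\Bigr)\le\P(\sigma^*\le T)+\P\Bigl(\sup_{t\le\sigma}|e_t|\ge\varepsilon_0\wedge\tfrac\delta2\Bigr)+O(\eta)\le\varepsilon+\frac{C'\eta}{(\varepsilon_0\wedge\delta/2)^2}+O(\eta),
\]
uniformly over $X_0\in K_0$. We first let $\eta\downarrow0$ and then $\varepsilon\downarrow0$; this proves \eqref{eq:euler-convergence-locally-uniform}. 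Taking $K_0=\{X_0\}$ gives \eqref{eq:euler-convergence-prob}.
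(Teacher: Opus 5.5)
Your architecture is the paper's. You localize the exact feedback with Lemma~\ref{lem:uniform-localization-control} applied to $\pi^*$, prove an $O(\eta)$ mean-square Euler error on a compact collision-free region where $-\nabla E$ and $h_\lambda$ are Lipschitz, and show that on a good event the Euler path never leaves that region. Your stopped Gronwall estimate replaces the paper's cutoff $\chi$ and its citation of the global-Lipschitz Euler theorem, and that part is fine.

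The step that fails as written is the bootstrap. You confine $X^*$ to $D_{R,2\delta}$ but stop the Euler path on leaving $D_{R,\delta}$, and these two sets have the \emph{same} radius $R$. On $\{\sigma^*>T\}\cap\{\sup_{t\le\sigma}|e_t|<\delta/2\}$ you only get $|\overline X^\eta_\sigma|<R+\delta/2$; you write yourself that $\overline X^\eta_\sigma\in D_{R+\delta/2,3\delta/2}$, and $D_{R+\delta/2,3\delta/2}\not\subset D_{R,\delta}$. So nothing excludes $\bar\sigma\le T$ with $|\overline X^\eta_{\bar\sigma}|=R$ while $X^*$ stays in $D_{R,2\delta}$, and ``$\bar\sigma>T$'' does not follow. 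You need a radial buffer, just as you built a separation buffer. For instance, stop the Euler path on leaving $D_{2R,\delta}$ and use the tolerance $\min\{R/2,\delta/2\}$ in place of $\delta/2$. Then at a putative exit time $\bar\sigma\le T$, the point $\overline X^\eta_{\bar\sigma}\notin D_{2R,\delta}$ would lie within the tolerance of $X^*_{\bar\sigma}\in D_{R,2\delta}$, which is impossible. This is why the paper uses the nested domains $D_0=\{|X|<R,\ \mathfrak d>4\delta\}\subset D_1=\{|X|<2R,\ \mathfrak d>2\delta\}$ with tolerance $\varepsilon_{R,\delta}=\min\{R/2,\delta/\sqrt2\}$.

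There are also two smaller corrections. First, the separation $\mathfrak d$ is $\sqrt2$-Lipschitz, not $1$-Lipschitz, since $\bigl||x_i-x_j|-|y_i-y_j|\bigr|\le|x_i-y_i|+|x_j-y_j|\le\sqrt2|X-Y|$. So $|e_\sigma|<\delta/2$ gives $\mathfrak d(\overline X^\eta_\sigma)>2\delta-\delta/\sqrt2$ rather than $3\delta/2$. This is still $>\delta$, but your further step placing the frozen point within $\delta/2$ of $\overline X^\eta_\sigma$ would then only give $\mathfrak d>(2-\sqrt2)\delta<\delta$. Second, that step is unnecessary anyway. The frozen value $\overline X^\eta_{\kappa_\eta(t)}=\overline X^\eta_{t_n}$ is a past value of the continuous Euler path, so the frozen process cannot leave an open set before $\overline X^\eta$ does. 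Hence $\bar\sigma$ is simply the exit time of $\overline X^\eta$, and you can drop the Gaussian modulus-of-continuity union bound. With the radial buffer added, your final bound $\varepsilon+C\eta/(\varepsilon_0\wedge\mathrm{tol})^2$, uniform over $X_0\in K_0$, is correct.
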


\begin{proof}
The argument localizes the standard Euler estimate for globally Lipschitz coefficients.

\smallskip
\noindent\textit{Step 1: three nested collision-free domains.}
Fix $K_0\Subset\hX$ and $\varepsilon>0$.  By
Lemma~\ref{lem:uniform-localization-control}, applied to the optimal feedback control, we may first
choose $R>1$ and then $\delta>0$ so small that $K_0$ is contained in
\[
 D_0:=\left\{X\in\hX:|X|<R,\ \mathfrak d(X)>4\delta\right\}
\]
and
\begin{equation}\label{eq:exact-exit-small-Euler}
 \sup_{X_0\in K_0}\P_{X_0}(\tau_0\le T)<\varepsilon,
 \qquad
 \tau_0:=\inf\{t\ge0:X_t^*\notin D_0\}.
\end{equation}
Introduce the larger domains
\begin{align*}
 D_1&:=\left\{X\in\hX:|X|<2R,\ \mathfrak d(X)>2\delta\right\},\\
 D_2&:=\left\{X\in\hX:|X|<3R,\ \mathfrak d(X)>\delta\right\}.
\end{align*}
Then
\[
 \overline D_0\subset D_1,
 \qquad
 \overline D_1\subset D_2,
 \qquad
 \overline D_2\Subset\hX.
\]
The radial and separation buffers will ensure that a sufficiently accurate Euler path remains in the
region where the original and truncated coefficients coincide.

\smallskip
\noindent\textit{Step 2: globally Lipschitz truncated coefficients.}
By Theorem~\ref{thm:HJB},
\[
 b(X):=-\nabla E(X),
 \qquad
 \sigma(X):=\sqrt{2\tau_\lambda(X)}
\]
are locally Lipschitz on $\hX$.  Choose $\chi\in C_c^\infty(D_2)$ with
$0\le\chi\le1$ and $\chi=1$ on $\overline D_1$.  The products $\chi b$ and $\chi\sigma$ vanish in
a neighborhood of $\partial D_2$ and hence extend by zero to globally Lipschitz functions on
$\R^m$.  Denote these extensions by $b^{R,\delta}$ and $\sigma^{R,\delta}$.

Let $X^{R,\delta}$ solve
\begin{equation}\label{eq:truncated-feedback-SDE-Euler}
 \dd X_t^{R,\delta}
 =b^{R,\delta}(X_t^{R,\delta})\dd t
 +\sigma^{R,\delta}(X_t^{R,\delta})\dd W_t,
 \qquad X_0^{R,\delta}=X_0,
\end{equation}
and let $\overline X^{\eta,R,\delta}$ be its continuous Euler interpolation.  The coefficients are
globally Lipschitz, with constants independent of $X_0\in K_0$.  The standard strong Euler estimate
therefore gives
\begin{equation}\label{eq:global-truncated-Euler-estimate}
 \sup_{X_0\in K_0}
 \E_{X_0}\left[
 \sup_{0\le t\le T}
 |\overline X_t^{\eta,R,\delta}-X_t^{R,\delta}|^2
 \right]
 \le C_{R,\delta,T,K_0}\eta;
\end{equation}
see \cite[Chapter~10]{KloedenPlaten1992}.

On the event $\{\tau_0>T\}$, the exact feedback path stays in $D_0\subset D_1$.  Since the
truncated and original coefficients agree on $D_1$, pathwise uniqueness up to the first exit from
$D_1$ gives
\begin{equation}\label{eq:truncated-exact-coincidence}
 X_t^{R,\delta}=X_t^*,
 \qquad0\le t\le T,
 \quad\text{on }\{\tau_0>T\}.
\end{equation}
Indeed, an exit of $X^{R,\delta}$ from $D_1$ before time $T$ would, by equality up to that exit,
force $X^*$ to reach $\partial D_1$, which is impossible while $X^*$ remains in $D_0$.

\smallskip
\noindent\textit{Step 3: the buffers keep the Euler grid points in $D_1$.}
Set
\begin{equation}\label{eq:euler-buffer}
 \varepsilon_{R,\delta}:=
 \min\left\{\frac R2,\frac{\delta}{\sqrt2}\right\}
\end{equation}
and consider the event
\begin{equation}\label{eq:euler-good-event}
 G_\eta:=\{\tau_0>T\}\cap
 \left\{
 \sup_{0\le t\le T}
 |\overline X_t^{\eta,R,\delta}-X_t^{R,\delta}|
 <\varepsilon_{R,\delta}
 \right\}.
\end{equation}
On $G_\eta$, relation \eqref{eq:truncated-exact-coincidence} shows that every grid value satisfies
\[
 |\overline X_{t_n}^{\eta,R,\delta}|
 <R+\frac R2<2R.
\]
Moreover, for every pair $i<j$,
\begin{align*}
 |\overline X_{t_n}^{\eta,R,\delta,i}
   -\overline X_{t_n}^{\eta,R,\delta,j}|
 &\ge |X_{t_n}^{*,i}-X_{t_n}^{*,j}|
   -|\overline X_{t_n}^{\eta,R,\delta,i}-X_{t_n}^{*,i}|
   -|\overline X_{t_n}^{\eta,R,\delta,j}-X_{t_n}^{*,j}|\\
 &\ge4\delta-\sqrt2
 |\overline X_{t_n}^{\eta,R,\delta}-X_{t_n}^*|\\
 &>4\delta-\sqrt2\,\varepsilon_{R,\delta}
 \ge3\delta>2\delta.
\end{align*}
Thus every truncated Euler grid value belongs to $D_1$.

We now prove by induction over the grid intervals that, on $G_\eta$, the original and truncated Euler
interpolations coincide.  They have the same initial value.  Suppose they coincide at $t_n$.  The
common grid value belongs to $D_1$, where $b^{R,\delta}=b$ and
$\sigma^{R,\delta}=\sigma$.  Both interpolations therefore use the same frozen coefficients and the
same Brownian increment on $[t_n,t_{n+1}\wedge T]$, and hence coincide throughout this interval.
Thus
\begin{equation}\label{eq:Euler-original-truncated-coincidence}
 \overline X_t^\eta=\overline X_t^{\eta,R,\delta},
 \qquad0\le t\le T,
 \quad\text{on }G_\eta.
\end{equation}

\smallskip
\noindent\textit{Step 4: removal of the localization.}
By Markov's inequality and \eqref{eq:global-truncated-Euler-estimate},
\begin{align}
 \sup_{X_0\in K_0}\P_{X_0}(G_\eta^c)
 &\le\sup_{X_0\in K_0}\P_{X_0}(\tau_0\le T)
 +\frac{C_{R,\delta,T,K_0}\eta}{\varepsilon_{R,\delta}^2}\notag\\
 &\le\varepsilon+
 \frac{C_{R,\delta,T,K_0}\eta}{\varepsilon_{R,\delta}^2}.
 \label{eq:Euler-good-event-probability}
\end{align}
On $G_\eta$, equations \eqref{eq:truncated-exact-coincidence} and
\eqref{eq:Euler-original-truncated-coincidence} give
\[
 \sup_{0\le t\le T}|\overline X_t^\eta-X_t^*|
 =\sup_{0\le t\le T}
 |\overline X_t^{\eta,R,\delta}-X_t^{R,\delta}|.
\]
Consequently, for every $\varepsilon_0>0$,
\begin{align*}
 &\sup_{X_0\in K_0}\P_{X_0}\left(
 \sup_{0\le t\le T}|\overline X_t^\eta-X_t^*|>\varepsilon_0
 \right)\\
 &\quad\le
 \varepsilon+
 \frac{C_{R,\delta,T,K_0}\eta}{\varepsilon_{R,\delta}^2}
 +\frac{C_{R,\delta,T,K_0}\eta}{\varepsilon_0^2}.
\end{align*}
First let $\eta\downarrow0$ and then $\varepsilon\downarrow0$.  This proves
\eqref{eq:euler-convergence-locally-uniform}, and hence
\eqref{eq:euler-convergence-prob}.
\end{proof}

\subsection{Geometric ergodicity and convergence of the record}

Recall the step-size threshold $\eta_0$ from \eqref{eq:eta0}.  For a measurable function
$V:\hX\to[1,\infty)$ and a finite signed measure $\zeta$ on $\hX$, define
\begin{equation}\label{eq:V-norm-definition}
 \norm{\zeta}_V:=\sup\left\{\left|\int_{\hX} f\,\dd\zeta\right|:
 f:\hX\to\R\text{ measurable and }|f|\le V\right\}.
\end{equation}
For $V\equiv1$, this is the usual total variation norm under the convention used here.

The deterministic mechanism turning recurrence into optimization is isolated in the next lemma.

\begin{lemma}\label{lem:running-record-principle}
Let $(Y_n)_{n\ge0}$ be any sequence in $\hX$, and let
\[
 \widehat Y_n:=Y_{\iota_n},
 \qquad
 \iota_n:=\min\left\{0\le k\le n:
 E(Y_k)=\min_{0\le\ell\le n}E(Y_\ell)\right\}.
\]
Suppose that, for every $j\ge1$, the open sublevel set
\[
 A_j:=\{X\in\hX:E(X)<E_*+j^{-1}\}
\]
is visited infinitely often by $(Y_n)$.  Then
\[
 E(\widehat Y_n)\longrightarrow E_*,
 \qquad
 \dist(\widehat Y_n,\Mc_*)\longrightarrow0.
\]
If $\Mc_*=\{X_*\}$, then $\widehat Y_n\to X_*$.
\end{lemma}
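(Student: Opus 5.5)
The argument is deterministic and splits into an energy statement and a geometric statement. For the energy, the sequence $e_n:=E(\widehat Y_n)=\min_{0\le k\le n}E(Y_k)$ is nonincreasing. Since every $Y_k\in\hX$ and $E_*$ is the global infimum (Proposition~\ref{prop:reduction}), we also have $e_n\ge E_*$. Now fix $j\ge1$. Because $A_j$ is visited infinitely often, in particular at least once, there is an index $k_j$ with $E(Y_{k_j})<E_*+j^{-1}$. Then $e_n<E_*+j^{-1}$ for all $n\ge k_j$. Monotonicity together with arbitrariness of $j$ gives $e_n\downarrow E_*$. This proves the first claim, and only one visit to each $A_j$ is actually needed.

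For the distance statement, I would argue by contradiction using coercivity and continuity of $E$. Suppose there exist $\varepsilon>0$ and a subsequence $n_k$ with $\dist(\widehat Y_{n_k},\Mc_*)\ge\varepsilon$. By the lower bound in \eqref{eq:E-growth}, $E(X)\ge\frac{\pmin}{4}|X|^2-\frac12m_2(\nu)$. Since $E(\widehat Y_{n_k})\le e_0=E(Y_0)$, the subsequence is bounded in $\Xc^N$. Extract a further subsequence converging to some $Z\in\Xc^N$; the limit may lie in $\Coll$, which is why we pass to the ambient space $\Xc^N$. Continuity of $E$ on all of $\Xc^N$ (Proposition~\ref{prop:lip}) yields $E(Z)=\lim E(\widehat Y_{n_k})=E_*$, so $Z\in\Mc_*$ by \eqref{eq:minimizer-set}. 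Continuity of $\dist(\cdot,\Mc_*)$, which holds because $\Mc_*$ is nonempty and compact, then gives $\dist(Z,\Mc_*)\ge\varepsilon$, a contradiction.

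The only point needing care is the compactness step. The record lives in $\hX$, which is not closed, so a limit point could a priori be a collision configuration. This is harmless here: $E$ is continuous on the whole space, and Proposition~\ref{prop:reduction} places $\Mc_*$ inside $\hX$. The minimizer set is therefore characterized as $E^{-1}(\{E_*\})$ in $\Xc^N$, with no collision issue.

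Finally, if $\Mc_*=\{X_*\}$, then $\dist(\widehat Y_n,\Mc_*)=|\widehat Y_n-X_*|\to0$, which is the last claim.
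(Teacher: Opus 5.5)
Your proposal is correct and follows the paper's proof essentially step by step: monotonicity of the running minimum, with one visit to each $A_j$, gives the energy claim. The distance claim then follows by contradiction, using the coercive lower bound \eqref{eq:E-growth}, extracting a convergent subsequence in $\Xc^N$, and using continuity of $E$ to place the limit in $\Mc_*$.
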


\begin{proof}
For every $j$, after the first visit to $A_j$ the running minimum remains below $E_*+j^{-1}$.
Since it is always bounded below by $E_*$, this proves
$E(\widehat Y_n)\to E_*$.  If the distance to $\Mc_*$ did not converge to zero, there would be
$\delta_0>0$ and a subsequence $n_k$ such that
\[
 \dist(\widehat Y_{n_k},\Mc_*)\ge\delta_0,
 \qquad
 E(\widehat Y_{n_k})\longrightarrow E_*.
\]
The coercive lower bound \eqref{eq:E-growth} makes $(\widehat Y_{n_k})$ bounded.  Passing to a
further subsequence, $\widehat Y_{n_k}\to\bar X$ in $\Xc^N$.  Continuity of $E$ gives
$E(\bar X)=E_*$, so $\bar X\in\Mc_*$ by Proposition~\ref{prop:reduction}, contradicting the
distance bound.  The singleton case is immediate.
\end{proof}

\begin{proof}[Proof of Theorem~\ref{thm:numerical-convergence}]
The proof below uses only $d\ge1$.  Let
\begin{equation}\label{eq:discrete-filtration}
 \Fc_n:=\sigma(X_0,\xi_1,\ldots,\xi_n),
 \qquad n\ge0,
\end{equation}
and let $P_{\eta,\tau}$ denote the transition kernel of \eqref{eq:euler-generic}.  Let $\psi$ be
$m$-dimensional Lebesgue measure restricted to $\hX$.

\smallskip
\noindent\textit{Step 1: the global quadratic drift inequality.}
Conditionally on $X_n=X$, the next state is Gaussian with mean
\[
 \mu_\eta(X):=X-\eta\nabla E(X)
\]
and covariance $2\eta\tau(X)I_m$.  Therefore, for $V(X)=1+|X|^2$,
\begin{equation}\label{eq:PV-computation}
 P_{\eta,\tau}V(X)
 =1+|X-\eta\nabla E(X)|^2+2\eta m\tau(X).
\end{equation}
On $\hX$, Proposition~\ref{prop:C1} identifies $\nabla E$ with the unique supergradient.  Hence
\eqref{eq:dissipativity-supergradient} and \eqref{eq:growth-supergradient} give
\begin{align}
 -2\eta X\cdot\nabla E(X)
 &\le-\pmin\eta|X|^2+\eta m_2(\nu),
 \label{eq:drift-term-chain-detailed}\\
 \eta^2|\nabla E(X)|^2
 &\le2\pmax^2\eta^2|X|^2
 +2\pmax m_2(\nu)\eta^2.
 \label{eq:square-gradient-chain-detailed}
\end{align}
The definition
\[
 \eta_0=\min\left\{1,\frac{\pmin}{4\pmax^2}\right\}
\]
ensures, for $0<\eta\le\eta_0$, that
\begin{equation}\label{eq:step-size-two-consequences}
 -\pmin\eta+2\pmax^2\eta^2
 \le-\frac{\pmin\eta}{2},
 \qquad
 \eta^2\le\eta.
\end{equation}
Using also $\tau\le1$ in \eqref{eq:PV-computation}, we obtain
\begin{equation}\label{eq:global-drift-chain}
 P_{\eta,\tau}V(X)
 \le\left(1-\frac{\pmin\eta}{2}\right)V(X)+B_0\eta,
 \qquad X\in\hX,
\end{equation}
where, for example,
\[
 B_0:=\frac{\pmin}{2}+m_2(\nu)+2\pmax m_2(\nu)+2m.
\]
The constant $B_0$ is independent of $X$ and of the Borel rule $\tau$.

Choose $R>0$ so large that
$B_0\le(\pmin/4)V(X)$ whenever $|X|>R$.  With
\begin{equation}\label{eq:gamma-chain-detailed}
 \gamma:=1-\frac{\pmin\eta}{4}\in(0,1),
 \qquad
 C_R:=\overline B_R(0)\cap\hX,
\end{equation}
we obtain the Foster--Lyapunov condition
\begin{equation}\label{eq:drift-small-set}
 P_{\eta,\tau}V(X)
 \le\gamma V(X)+b_R\id_{C_R}(X),
 \qquad
 b_R:=\sup_{X\in C_R}P_{\eta,\tau}V(X)<\infty.
\end{equation}
The finiteness of $b_R$ follows directly from \eqref{eq:global-drift-chain}; continuity of $\tau$
is not required.

\smallskip
\noindent\textit{Step 2: transition density, irreducibility, and a small set.}
The one-step law has the Lebesgue density on the full ambient space $\R^m$
\begin{equation}\label{eq:transition-density-chain}
 p_{\eta,\tau}(X,Y)
 =\frac1{(4\pi\eta\tau(X))^{m/2}}
 \exp\left(-\frac{|Y-\mu_\eta(X)|^2}{4\eta\tau(X)}\right).
\end{equation}
It is Borel measurable and strictly positive for every $X\in\hX$ and $Y\in\R^m$.  Since
$\Coll$ is Lebesgue-null, the kernel assigns probability one to $\hX$.  If
$A\subset\hX$ is Borel and $\psi(A)>0$, then
\[
 P_{\eta,\tau}(X,A)
 =\int_Ap_{\eta,\tau}(X,Y)\dd Y>0,
 \qquad X\in\hX.
\]
Thus the chain is $\psi$-irreducible.  This remains true when $d=1$, although $\hX$ then has
several connected components, because a Gaussian step can jump between them.

For $X\in C_R$, the global gradient-growth estimate gives a finite constant $M_R$ such that
$|\mu_\eta(X)|\le M_R$.  For $Y\in B_R(0)$,
$|Y-\mu_\eta(X)|\le R+M_R$.  Since $a\le\tau(X)\le1$,
\begin{equation}\label{eq:explicit-density-lower-chain}
 p_{\eta,\tau}(X,Y)
 \ge (4\pi\eta)^{-m/2}
 \exp\left(-\frac{(R+M_R)^2}{4\eta a}\right)
 =:c_R>0
\end{equation}
for every $X\in C_R$ and Lebesgue-almost every $Y\in C_R$.  Define
\[
 \nu_R(A):=\frac{\Leb^m(A\cap C_R)}{\Leb^m(C_R)}.
\]
Because $\Coll$ is Lebesgue-null,
$\Leb^m(C_R)=\Leb^m(B_R(0))>0$.  Hence
\begin{equation}\label{eq:minorization-chain}
 P_{\eta,\tau}(X,A)\ge\varepsilon_R\nu_R(A),
 \qquad X\in C_R,
 \qquad
 \varepsilon_R:=c_R\Leb^m(C_R)>0.
\end{equation}
Necessarily $\varepsilon_R\le1$, since the left-hand side is a probability kernel.  Thus $C_R$ is a
one-step small set.  Since $\nu_R(C_R)=1$, the minorization is strongly aperiodic, and the
$\psi$-irreducible chain is aperiodic.

\smallskip
\noindent\textit{Step 3: positive Harris recurrence, invariant moments, and geometric ergodicity.}
The drift condition \eqref{eq:drift-small-set} and the one-step small-set property imply positive
Harris recurrence and the existence of a unique invariant probability measure
$\Pi_{\eta,\tau}$; see \cite[Theorem~15.0.1]{MeynTweedie2009}.  Aperiodicity then gives convergence
in total variation from every initial state; see
\cite[Theorem~13.3.3]{MeynTweedie2009}.

We verify the second moment before invoking the weighted geometric conclusion.  Iterating
\eqref{eq:global-drift-chain} gives
\begin{equation}\label{eq:iterated-moment-chain}
 \sup_{n\ge0}P_{\eta,\tau}^nV(X)
 \le V(X)+\frac{2B_0}{\pmin}<\infty,
 \qquad X\in\hX.
\end{equation}
For $M>0$, the function $V\wedge M$ is bounded.  Total variation convergence therefore yields
\[
 \Pi_{\eta,\tau}(V\wedge M)
 =\lim_{n\to\infty}P_{\eta,\tau}^n(V\wedge M)(X)
 \le\sup_{n\ge0}P_{\eta,\tau}^nV(X).
\]
Letting $M\uparrow\infty$ and using monotone convergence gives
\begin{equation}\label{eq:invariant-second-moment-chain}
 \Pi_{\eta,\tau}(V)<\infty.
\end{equation}
The geometric drift theorem for an aperiodic $\psi$-irreducible chain now yields constants
$C<\infty$ and $r\in(0,1)$ such that \eqref{eq:V-geometric} holds; see
\cite[Theorem~16.1.2]{MeynTweedie2009}.

Let $O\subset\hX$ be nonempty and open.  Then $\Leb^m(O)>0$, and strict positivity of the Gaussian
density gives $P_{\eta,\tau}(X,O)>0$ for every $X\in\hX$.  Invariance implies
\[
 \Pi_{\eta,\tau}(O)
 =\int_{\hX}P_{\eta,\tau}(X,O)\Pi_{\eta,\tau}(\dd X)>0.
\]
Thus $\Pi_{\eta,\tau}$ has full support in $\hX$.

\smallskip
\noindent\textit{Step 4: infinitely many visits to every low-energy set.}
For $j\ge1$, let
\begin{equation}\label{eq:low-energy-set}
 A_j:=\left\{X\in\hX:E(X)<E_*+j^{-1}\right\}.
\end{equation}
Proposition~\ref{prop:reduction} provides a global minimizer in $\hX$, and continuity of $E$ makes
$A_j$ a nonempty open set.  By full support,
$\Pi_{\eta,\tau}(A_j)>0$.  The ergodic theorem for positive Harris chains gives, for every initial
state $X\in\hX$,
\begin{equation}\label{eq:ergodic-indicator-low-energy}
 \frac1n\sum_{k=0}^{n-1}\id_{A_j}(X_k)
 \longrightarrow\Pi_{\eta,\tau}(A_j)>0
 \qquad\P_X^\tau\text{-almost surely};
\end{equation}
see \cite[Theorem~17.0.1]{MeynTweedie2009}.  Hence every $A_j$ is visited infinitely often almost
surely.  Applying Lemma~\ref{lem:running-record-principle} pathwise proves
\eqref{eq:record-energy-convergence} and \eqref{eq:record-distance-convergence}, together with the
singleton-minimizer conclusion.

\smallskip
\noindent\textit{Step 5: the raw iterates cannot converge.}
Fix an integer $K\ge1$.  The global gradient-growth estimate gives
\[
 M_K:=\sup\{|\nabla E(X)|:X\in\hX,\ |X|\le K\}<\infty.
\]
On the event $\{|X_n|\le K\}$, the reverse triangle inequality and $\tau(X_n)\ge a$ imply
\begin{align}
 &\P_X^\tau\left(|X_{n+1}-X_n|>1\mid\Fc_n\right)\notag\\
 &\quad\ge
 \P\left(
 |\xi_{n+1}|>\frac{1+\eta M_K}{\sqrt{2\eta a}}
 \right)
 =:c_K>0.
 \label{eq:uniform-large-increment-chain}
\end{align}
The constant $c_K$ is deterministic and independent of $n$.  For integers $N,L\ge1$, repeated
conditioning gives
\begin{align}
 &\P_X^\tau\left(
 |X_n|\le K\text{ and }|X_{n+1}-X_n|\le1
 \text{ for every }N\le n<N+L
 \right)\notag\\
 &\qquad\le(1-c_K)^L.
 \label{eq:block-small-increments-chain}
\end{align}
Letting $L\to\infty$ shows that, with probability one, the chain cannot remain forever in $B_K$
while all subsequent increments are at most one.  If $X_n$ converged to a finite limit, then for
some integers $K,N$ it would remain in $B_K$ and satisfy
$|X_{n+1}-X_n|\le1$ for every $n\ge N$.  Taking the countable union over $K,N$ proves
\eqref{eq:raw-not-converge}.
\end{proof}

\begin{remark}\label{rem:role-feedback}
Record convergence in Theorem~\ref{thm:numerical-convergence} uses only two robust properties:
confinement by the exact drift $-\nabla E$ and uniformly positive Gaussian exploration.  It does not
say that all temperature rules have the same finite-time performance.  The HJB feedback of
Theorem~\ref{thm:HJB} is optimal for the discounted criterion \eqref{eq:control-cost}; the discrete
theorem supplies the separate asymptotic guarantee that replacing it by any fixed clipped Borel
approximation does not destroy convergence of the running record.  No claim is made that
$\tau_\lambda$ minimizes hitting times, optimizes finite-time records, or gives the fastest record
convergence.
\end{remark}

\begin{remark}\label{rem:positive-floor-invariant-law}
The positive floor $a>0$ is essential for irreducibility and the full-support conclusion.  At fixed
$a$, the theorem does not assert that the invariant measures $\Pi_{\eta,\tau}$ concentrate near
$\Mc_*$ as $\eta\downarrow0$.  Persistent exploration is also why the raw iterates do not converge;
the optimization statement concerns the running record rather than the stationary state itself.
\end{remark}

\appendix

\section{Joint \texorpdfstring{$C^2$}{C2} regularity of the semi-dual}
\label{app:parametric}

This appendix records the precise joint site-and-weight differentiation result used in
Theorem~\ref{thm:C2} and verifies explicitly the hypotheses of the semi-discrete regularity theorem
invoked from \cite{deGournayKahnLebrat2019}.  Define the positive-cell parameter set
\begin{equation}\label{eq:positive-cell-set}
 \Oc_+:=\left\{(X,\Phi)\in\hX\times U:
 \Leb^d(V_i(X,\Phi))>0\text{ for every }i\right\}.
\end{equation}
Because $\rho$ is bounded above and below by positive constants on $\Omega$, positivity of
Lebesgue volume is equivalent to positivity of $\nu$-mass.  For $(X,\Phi)\in\Oc_+$ and $i\ne j$,
put
\begin{equation}\label{eq:general-facets-app}
 F_{ij}(X,\Phi):=\overline{V_i(X,\Phi)}\cap\overline{V_j(X,\Phi)}.
\end{equation}
All surface integrals below are taken with respect to the normalized Hausdorff measure
$\HH^{d-1}$ fixed in \eqref{eq:restricted-Hausdorff-main}.  Lower-dimensional multiple
intersections are therefore invisible to these integrals.

\begin{proposition}\label{prop:joint-C2-semidual}
The set $\Oc_+$ is relatively open in $\hX\times U$, and the semi-dual $\Kant$ in
\eqref{eq:semi-dual-main} belongs to $C^2(\Oc_+)$.  Let $\Oc$ be an open neighborhood of
$\Omega$ and let $q\in C^1(\Oc;\R^k)$.  For
\begin{equation}\label{eq:Jq-app}
 J_i^q(X,\Phi):=\int_{V_i(X,\Phi)}q(y)\rho(y)\dd y,
\end{equation}
one has $J_i^q\in C^1(\Oc_+;\R^k)$ and, for
$\Xi=(\xi_i)\in(\R^d)^N$ and $\Psi=(\psi_i)\in U$,
\begin{align}
 DJ_i^q(X,\Phi)[\Xi,\Psi]
 ={}&\sum_{j\ne i}\int_{F_{ij}(X,\Phi)}
 q(y)\rho(y)
 \frac{(y-x_i)\cdot\xi_i-(y-x_j)\cdot\xi_j+
 \psi_i-\psi_j}{|x_i-x_j|}
 \dd\HH^{d-1}(y).
 \label{eq:moving-cell-app}
\end{align}
In particular, for the mass map $M$ in \eqref{eq:mass-moment-main},
\begin{align}
 D_\Phi M(X,\Phi)[\Psi]_i
 &=\sum_{j\ne i}\int_{F_{ij}(X,\Phi)}
 \frac{\rho(y)}{|x_i-x_j|}(\psi_i-\psi_j)\dd\HH^{d-1}(y),
 \label{eq:DphiM-app}\\
 D_XM(X,\Phi)[\Xi]_i
 &=\sum_{j\ne i}\int_{F_{ij}(X,\Phi)}
 \frac{\rho(y)}{|x_i-x_j|}
 \left((y-x_i)\cdot\xi_i-(y-x_j)\cdot\xi_j\right)
 \dd\HH^{d-1}(y).
 \label{eq:DXM-app}
\end{align}
\end{proposition}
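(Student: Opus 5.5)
Openness of $\Oc_+$ comes first.  By dominated convergence, $(X,\Phi)\mapsto\Leb^d(V_i(X,\Phi))$ is continuous on $\hX\times U$: outside the finitely many limiting competition hyperplanes, the minimizing index is locally constant, exactly as in the proof of Lemma~\ref{lem:weight-continuity}.  Since $\Oc_+$ is defined by finitely many strict inequalities on continuous functions, it is relatively open.

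For the moving-cell formula \eqref{eq:moving-cell-app}, I would show that $J_i^q$ is Gateaux differentiable along affine parameter curves $s\mapsto(X+s\Xi,\Phi+s\Psi)$, and then prove that the resulting derivative is jointly continuous on $\Oc_+$.  Fréchet $C^1$ regularity then follows from the standard criterion.  The cell $V_i$ is $\Omega$ intersected with the half-spaces $\{h_{ij}\le0\}$, where $h_{ij}$ is affine in $y$ with gradient $x_j-x_i\neq0$.  The difference quotient
\[
\frac1s\int q\rho\,\bigl(\id_{V_i(s)}-\id_{V_i(0)}\bigr)
\]
decomposes, up to terms of order $o(1)$ coming from pairwise intersections of two moving strips, into integrals over thin slabs around each facet hyperplane $\{h_{ij}=0\}$.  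The slab width is $s$ times the normal velocity \eqref{eq:normal-velocity-main}.  Using the coarea (Fubini) formula in the normal coordinate $t=h_{ij}/|x_i-x_j|$, each slab integral becomes
\[
\int_0^{s\,v(y)}\int_{\{h_{ij}=t\}\cap\text{cell}} q\rho\,\dd\HH^{d-1}\,\dd t .
\]
Dividing by $s$ and letting $s\to0$ yields the facet integral, provided the function
\[
t\mapsto\int_{\{h_{ij}=t\}\cap(\text{other constraints})}q\rho\,\dd\HH^{d-1}
\]
is continuous at $t=0$.

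That continuity requirement is where the hypothesis $\rho\in C(\Omega)$ enters.  The sections are polytope pieces of $\Omega$ that vary continuously in Hausdorff distance.  Their $\HH^{d-1}$-measures vary continuously because these convex sections are nondegenerate, and when a section is degenerate its measure tends to zero on both sides.  Continuity of $\rho$ then gives continuity of the integrand.  This is exactly the regime treated in \cite[Theorem~1 and Proposition~1.1]{deGournayKahnLebrat2019}, and I would cite that reference after checking its hypotheses: quadratic cost, distinct sites (which is the twist condition), convex $\Omega$, a continuous density bounded away from zero, and positive cells.

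I expect the main obstacle to be \emph{joint continuity} of the facet integrals
\[
(X,\Phi)\mapsto\int_{F_{ij}(X,\Phi)}g(y,X,\Phi)\,\dd\HH^{d-1}(y)
\]
on $\Oc_+$.  The facets are convex polytopes $\Omega\cap\{h_{ij}=0\}\cap\bigcap_{k}\{h_{ik}\le0\}$.  My first approach is to parametrize the hyperplane $\{h_{ij}=0\}$ by a rigid motion depending continuously on $(X,\Phi)$, which is possible since $x_i\neq x_j$.  This pulls every facet back to a fixed $\R^{d-1}$.  The indicator of the pulled-back facet then converges almost everywhere, because its boundary consists of finitely many relative hyperplanes together with $\partial\Omega$ sliced, and these are $\HH^{d-1}$-null except in degenerate tangential situations.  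The tangential situations occur when $\{h_{ij}=0\}$ contains a facet of $\partial\Omega$ or another competition hyperplane.  The first case is harmless: the set has positive measure only if it is a face of $\Omega$, and then it lies on $\partial\Omega$, where the indicator is still a.e. continuous in the limit because the cell interiors are positive.  The second case cannot occur for distinct sites unless the cells degenerate.  Dominated convergence then gives continuity.

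With $J_i^q\in C^1$ established, the remaining claims follow.  Taking $q=1$ gives \eqref{eq:DphiM-app} and \eqref{eq:DXM-app}.  For the $C^2$ regularity of $\Kant$, note that $D_\Phi\Kant=(p_i-J_i^1)$.  By the envelope computation,
\[
D_X\Kant[\Xi]=\sum_i\int_{V_i}(x_i-y)\cdot\xi_i\,\rho=\sum_i\bigl(x_iJ_i^1-J_i^{y}\bigr)\cdot\xi_i .
\]
The nonsmooth minimum in the integrand is handled by the same slab estimate, since $\min$ is Lipschitz and is attained uniquely off null sets.  Both partial derivatives are $C^1$ on $\Oc_+$, so $\Kant\in C^2(\Oc_+)$.
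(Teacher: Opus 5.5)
Your proposal is correct and takes a genuinely different route from the paper. Openness is handled as in the paper, by continuity of the cell volumes. Beyond that, the paper proves nothing by hand. On a neighborhood where $\min_{i\ne j}|x_i-x_j|\ge r_0$ and every cell has volume at least $m_0$, it checks the hypotheses of \cite[Proposition~1.1]{deGournayKahnLebrat2019}: a bounded convex Lipschitz domain, $\rho\in C^0\cap W^{1,1}\cap L^\infty$, distinct sites and positive cells. It then takes $\Kant\in C^2$ from \cite[Theorem~1]{deGournayKahnLebrat2019} and the moving-cell formula from the cited Lemmas~1.1--1.2 applied to $q\rho\in W^{1,1}\cap L^\infty$.

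You instead do three things by hand:
\begin{itemize}
\item derive the moving-cell formula directly, using thin slabs, Fubini in the normal coordinate, and the $O(s^2)$ volume of overlaps of transversal strips;
\item prove joint continuity of the facet integrals by pulling facets back to a fixed hyperplane and using dominated convergence;
\item deduce $\Kant\in C^2$ from $D_\Phi\Kant=(p_i-J_i^1)_i$ and $D_X\Kant[\Xi]=\sum_i(x_iJ_i^1-J_i^{y})\cdot\xi_i$.
\end{itemize}
For that last step, plain dominated convergence suffices; no slab estimate is needed.

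What your route buys: it is self-contained and needs only continuity of $q\rho$. In this proposition the $W^{1,1}$ hypothesis enters only through the citation. It also makes explicit why positive cells exclude the degenerate configurations, which the paper leaves to the reference. What the paper's route buys is brevity. Note that your list of the reference's hypotheses omits $W^{1,1}$; if you fall back on the citation, you must include it. It is available under Assumption~\ref{ass:geometry}.

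Two justifications need tightening.

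\emph{The tangential cases.} The case $\HH^{d-1}(\{h_{ij}=0\}\cap\partial\Omega)>0$ is not ``harmless.'' If it occurred, the facet indicators would not converge a.e.; think of a face of a cube. Rather, it cannot occur on $\Oc_+$. Such a hyperplane misses $\Omega^\circ$, so it supports $\Omega$, and then one of $V_i,V_j$ lies inside it and has zero volume. Likewise, $\{h_{ij}=0\}=\{h_{ik}=0\}$ forces $x_i,x_j,x_k$ to be collinear and traps one of the three cells in the hyperplane. This same observation is what justifies your $o(s)$ bound on the strip overlaps.

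\emph{What continuity you actually need.} The slab width $s\,v(y)$ varies along the facet and may change sign there. So continuity at $t=0$ of the scalar section integral $t\mapsto\int_{\{h_{ij}=t\}\cap\cdots}q\rho\,\dd\HH^{d-1}$ is not sufficient. Let $g(z,t)$ be $q\rho$ times the indicator of $\Omega\cap\bigcap_{k\ne i,j}\{h_{ik}\le0\}$, evaluated at $z+te$, where $e$ is the unit normal of $H=\{h_{ij}=0\}$. What you need is that $t\mapsto g(\cdot,t)$ is continuous at $t=0$ in $L^1(\HH^{d-1}\restr H)$. Your a.e.-convergence-plus-domination argument gives exactly this, so the repair is only in how the step is stated.
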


\begin{proof}
We divide the verification into three steps.

\smallskip
\noindent\textit{Step 1: openness of the positive-cell set and local nondegeneracy.}
For fixed $i$, the map
\[
 (X,\Phi)\longmapsto \Leb^d(V_i(X,\Phi))
\]
is continuous on $\hX\times U$.  Indeed, if $(X^n,\Phi^n)\to(X,\Phi)$, then, outside the finite
union of the limiting competition hyperplanes, the minimizer in the definition of the Laguerre
cells is unique and remains unchanged for all sufficiently large $n$.  Hence
\[
 \id_{V_i(X^n,\Phi^n)}(y)
 \longrightarrow
 \id_{V_i(X,\Phi)}(y)
 \qquad\text{for Lebesgue-a.e. }y\in\Omega,
\]
and dominated convergence gives convergence of the cell volumes.  This proves that $\Oc_+$ is
relatively open.

Fix $(X^0,\Phi^0)\in\Oc_+$.  Since $X^0\in\hX$ and every cell has positive volume, there are
constants $r_0>0$ and $m_0>0$ and a neighborhood $\Nc$ of $(X^0,\Phi^0)$ in $\hX\times U$ such
that, for every $(X,\Phi)\in\Nc$,
\begin{equation}\label{eq:local-positive-separated-app}
 \min_{i\ne j}|x_i-x_j|\ge r_0,
 \qquad
 \min_{1\le i\le N}\Leb^d(V_i(X,\Phi))\ge m_0.
\end{equation}
After shrinking $\Nc$, we may and do assume that $\Nc\subset\Oc_+$.  Thus distinctness of the
sites and positivity of all cells hold on a full parameter neighborhood, rather than only at the
base point.

\smallskip
\noindent\textit{Step 2: verification of the joint $C^2$ theorem.}
Set
\[
 c_i(X;y):=\frac12|y-x_i|^2,
 \qquad
 h_{ij}(X,\Phi;y):=c_i(X;y)-\phi_i-c_j(X;y)+\phi_j.
\]
We match our setting with the Euclidean specialization
\cite[Proposition~1.1]{deGournayKahnLebrat2019}.  The interior
$\Omega^\circ$ is a bounded convex Lipschitz domain, and replacing the compact body $\Omega$ by
$\Omega^\circ$ does not alter any integral because $\partial\Omega$ has Lebesgue measure zero.  By
Assumption~\ref{ass:geometry},
\[
 \rho\in C^0(\Omega)\cap W^{1,1}(\Omega^\circ)\cap L^\infty(\Omega^\circ).
\]
Moreover, throughout the neighborhood $\Nc$, the sites are pairwise distinct and all Laguerre cells
have positive volume by \eqref{eq:local-positive-separated-app}.  These are exactly the additional
hypotheses imposed in \cite[Proposition~1.1]{deGournayKahnLebrat2019} for the quadratic Euclidean
cost.  That proposition verifies, in particular, the pairwise-interface, multiple-intersection, and
fixed-boundary conditions required by the general second-order theorem.  The basic transversality is
also explicit here:
\begin{equation}\label{eq:transversality-app}
 |\nabla_yh_{ij}(X,\Phi;y)|
 =|x_i-x_j|
 \ge r_0,
 \qquad (X,\Phi)\in\Nc.
\end{equation}
Thus the hypotheses of \cite[Theorem~1]{deGournayKahnLebrat2019}, including its continuity
condition for the second derivatives, hold at every point of $\Nc$.  The theorem gives joint twice
differentiability, in $(X,\Phi)$, of the integral part of $\Kant$ and continuity of its Hessian.
Adding the affine term $\sum_i p_i\phi_i$ proves
\[
 \Kant\in C^2(\Nc).
\]
Since the base point was arbitrary, $\Kant\in C^2(\Oc_+)$.  In particular, differentiating the
definition of $\Kant$ with respect to the weights gives the exact identity
\[
 M(X,\Phi)=-D_\Phi\Kant(X,\Phi),
\]
so the mass map is jointly $C^1$ on $\Oc_+$.  This is the continuous differentiability, rather than
merely pointwise existence of derivatives, required in the implicit-function argument of
Section~\ref{sec:objective}.

\smallskip
\noindent\textit{Step 3: differentiation of moving cell integrals.}
Because $q\in C^1(\Oc;\R^k)$ and $\rho\in W^{1,1}(\Omega^\circ)\cap L^\infty(\Omega^\circ)$,
each component of $q\rho$ belongs to
$W^{1,1}(\Omega^\circ)\cap L^\infty(\Omega^\circ)$.  We may therefore apply the moving-domain
formula of \cite[Lemmas~1.1--1.2]{deGournayKahnLebrat2019} componentwise.

The cell is
\[
 V_i(X,\Phi)=\Omega\cap\bigcap_{j\ne i}\{h_{ij}(X,\Phi;\cdot)\le0\}.
\]
Along a parameter curve with velocity $(\Xi,\Psi)$, differentiation of
$h_{ij}(X,\Phi;y)=0$ shows that the normal velocity of the $i$--$j$ interface, measured in the
outward normal direction of $V_i$, is
\begin{equation}\label{eq:normal-velocity-app}
 \frac{(y-x_i)\cdot\xi_i-(y-x_j)\cdot\xi_j+
 \psi_i-\psi_j}{|x_i-x_j|}.
\end{equation}
The outer boundary $\partial\Omega$ is fixed and therefore has zero parameter velocity.  The
multiple-intersection strata have zero $\HH^{d-1}$-measure by the verification in Step~2, so they
produce no additional surface term.  Integrating \eqref{eq:normal-velocity-app} against $q\rho$ on
each internal facet gives \eqref{eq:moving-cell-app}.  The same theorem gives continuity of this
derivative with respect to $(X,\Phi)$, hence $J_i^q\in C^1(\Oc_+;\R^k)$.

Taking $q=1$ yields \eqref{eq:DphiM-app}--\eqref{eq:DXM-app}.  Taking $q(y)=y$ gives the moment
derivative used in \eqref{eq:Dcentroid-main}.
\end{proof}

\begin{remark}\label{rem:compact-C2}
If $K\Subset\hX$, continuity of $\Phi^*$ implies that the graph
\[
 \{(X,\Phi^*(X)):X\in K\}
\]
is compact.  Every balanced cell has $\nu$-mass $p_i>0$, and hence positive Lebesgue volume, so this
graph is a compact subset of $\Oc_+$.  The continuity conclusions of
Proposition~\ref{prop:joint-C2-semidual}, together with the continuity and invertibility of
$L_X|_U$, are therefore uniform on $K$.  This is the compactness input used in
Theorem~\ref{thm:C2}; no H\"older exponent for the joint second derivatives is asserted.
\end{remark}


\begin{thebibliography}{99}

\bibitem{BourneRoper2015}
D.~P. Bourne and S.~M. Roper,
\newblock Centroidal power diagrams, Lloyd's algorithm, and applications to optimal location problems,
\newblock \emph{SIAM J. Numer. Anal.} \textbf{53} (2015), no.~6, 2545--2569.

\bibitem{ButtazzoSantambrogio2009}
G.~Buttazzo and F.~Santambrogio,
\newblock A mass transportation model for the optimal planning of an urban region,
\newblock \emph{SIAM Rev.} \textbf{51} (2009), no.~3, 593--610.

\bibitem{ChiangHwangSheu1987}
T.-S.~Chiang, C.-R.~Hwang, and S.-J.~Sheu,
\newblock Diffusion for global optimization in $\mathbb R^n$,
\newblock \emph{SIAM J. Control Optim.} \textbf{25} (1987), no.~3, 737--753.

\bibitem{deGournayKahnLebrat2019}
F.~de~Gournay, J.~Kahn, and L.~Lebrat,
\newblock Differentiation and regularity of semi-discrete optimal transport with respect to the parameters of the discrete measure,
\newblock \emph{Numer. Math.} \textbf{141} (2019), no.~2, 429--453.

\bibitem{DuEmelianenkoJu2006}
Q.~Du, M.~Emelianenko, and L.~Ju,
\newblock Convergence of the Lloyd algorithm for computing centroidal Voronoi tessellations,
\newblock \emph{SIAM J. Numer. Anal.} \textbf{44} (2006), no.~1, 102--119.

\bibitem{DuFaberGunzburger1999}
Q.~Du, V.~Faber, and M.~Gunzburger,
\newblock Centroidal Voronoi tessellations: applications and algorithms,
\newblock \emph{SIAM Rev.} \textbf{41} (1999), no.~4, 637--676.

\bibitem{EmelianenkoJuRand2008}
M.~Emelianenko, L.~Ju, and A.~Rand,
\newblock Nondegeneracy and weak global convergence of the Lloyd algorithm in $\mathbb R^d$,
\newblock \emph{SIAM J. Numer. Anal.} \textbf{46} (2008), no.~3, 1423--1441.


\bibitem{GaoXuZhou2022}
X.~Gao, Z.~Q. Xu, and X.~Y. Zhou,
\newblock State-dependent temperature control for Langevin diffusions,
\newblock \emph{SIAM J. Control Optim.} \textbf{60} (2022), no.~3, 1250--1268.

\bibitem{GelfandMitter1991}
S.~B. Gelfand and S.~K. Mitter,
\newblock Recursive stochastic algorithms for global optimization in $\mathbb R^d$,
\newblock \emph{SIAM J. Control Optim.} \textbf{29} (1991), no.~5, 999--1018.

\bibitem{GrafLuschgy2000}
S.~Graf and H.~Luschgy,
\newblock \emph{Foundations of Quantization for Probability Distributions},
\newblock Lecture Notes in Mathematics, vol.~1730, Springer, Berlin, 2000.

\bibitem{HolleyKusuokaStroock1989}
R.~A. Holley, S.~Kusuoka, and D.~W. Stroock,
\newblock Asymptotics of the spectral gap with applications to the theory of simulated annealing,
\newblock \emph{J. Funct. Anal.} \textbf{83} (1989), no.~2, 333--347.

\bibitem{KitagawaMerigotThibert2019}
J.~Kitagawa, Q.~M\'erigot, and B.~Thibert,
\newblock Convergence of a Newton algorithm for semi-discrete optimal transport,
\newblock \emph{J. Eur. Math. Soc.} \textbf{21} (2019), no.~9, 2603--2651.

\bibitem{KloedenPlaten1992}
P.~E. Kloeden and E.~Platen,
\newblock \emph{Numerical Solution of Stochastic Differential Equations},
\newblock Springer, Berlin, 1992.

\bibitem{LianWangZhang2026}
Y.~Lian, L.~Wang, and K.~Zhang,
\newblock Pointwise regularity for fully nonlinear elliptic equations in general forms,
\newblock arXiv:2012.00324v3, 2026.

\bibitem{MerigotSantambrogioSarrazin2021}
Q.~M\'erigot, F.~Santambrogio, and C.~Sarrazin,
\newblock Non-asymptotic convergence bounds for Wasserstein approximation using point clouds,
\newblock in \emph{Advances in Neural Information Processing Systems 34}, 2021, 12810--12821.

\bibitem{MeynTweedie2009}
S.~P. Meyn and R.~L. Tweedie,
\newblock \emph{Markov Chains and Stochastic Stability},
\newblock 2nd ed., Cambridge University Press, Cambridge, 2009.

\bibitem{PortalesCazellesPauwels2025}
L.~Portales, E.~Cazelles, and E.~Pauwels,
\newblock On the sequential convergence of Lloyd's algorithms,
\newblock \emph{Math. Oper. Res.} \textbf{51} (2025), no.~2, 1120--1138.

\bibitem{RaginskyRakhlinTelgarsky2017}
M.~Raginsky, A.~Rakhlin, and M.~Telgarsky,
\newblock Non-convex learning via stochastic gradient Langevin dynamics: a nonasymptotic analysis,
\newblock in \emph{Proceedings of the 2017 Conference on Learning Theory},
Proceedings of Machine Learning Research, vol.~65, 2017, 1674--1703.

\bibitem{RevuzYor1999}
D.~Revuz and M.~Yor,
\newblock \emph{Continuous Martingales and Brownian Motion},
\newblock 3rd ed., Springer, Berlin, 1999.

\bibitem{TangZhangZhou2022}
W.~Tang, Y.~P. Zhang, and X.~Y. Zhou,
\newblock Exploratory HJB equations and their convergence,
\newblock \emph{SIAM J. Control Optim.} \textbf{60} (2022), no.~6, 3191--3216.

\bibitem{Villani2009}
C.~Villani,
\newblock \emph{Optimal Transport: Old and New},
\newblock Springer, Berlin, 2009.

\bibitem{WangLiWangZhang2026}
T.~Wang, X.~Li, G.~Wang, and Z.~Zhang,
\newblock State-dependent temperature control in Langevin diffusions using numerical exploratory Hamiltonian--Jacobi--Bellman equations,
\newblock arXiv:2603.17934, 2026.

\bibitem{WangZariphopoulouZhou2020}
H.~Wang, T.~Zariphopoulou, and X.~Y. Zhou,
\newblock Reinforcement learning in continuous time and space: a stochastic control approach,
\newblock \emph{J. Mach. Learn. Res.} \textbf{21} (2020), paper no.~198, 1--34.

\end{thebibliography}
\end{document}